\numberwithin{equation}{section}
\numberwithin{equation}{subsection}
\theoremstyle{plain}
\newtheorem{theorem}[equation]{Theorem}
\newtheorem{lemma}[equation]{Lemma}
\newtheorem{proposition}[equation]{Proposition}
\newtheorem{corollary}[equation]{Corollary}
\theoremstyle{definition}
\newtheorem{definition}[equation]{Definition}
\newtheorem{example}[equation]{Example}
\newtheorem{remark}[equation]{Remark}
\newcommand{\cO}{\mathcal{O}}
\newcommand{\cH}{\mathcal{H}}
\newcommand{\cV}{\mathcal{V}}
\newcommand{\cF}{\mathcal{F}}
\newcommand{\cP}{\mathcal{P}}
\newcommand{\cS}{\mathcal{S}}
\newcommand{\calN}{\mathcal{N}}
\newcommand{\calT}{\mathcal{T}}
\newcommand{\calR}{\mathcal{R}}
\newcommand{\calW}{{\mathcal W}}
\newcommand{\cale}{\mathcal{E}}\newcommand{\calK}{\mathcal{K}}
\newcommand{\calP}{\mathcal{P}}
\newcommand{\calF}{\mathcal{F}}
\newcommand{\bz}{{\bf z}}
\DeclareMathOperator{\Hom}{{\rm Hom}}
\newcommand{\setQ}{\mathbb{Q}}\newcommand{\Q}{\mathbb{Q}}
\newcommand{\R}{\mathbb{R}}
\newcommand{\setZ}{\mathbb{Z}}\newcommand{\Z}{\mathbb{Z}}
\newcommand{\hh}{\mathfrak{h}}
\newcommand{\bt}{{\bf t}}
\newcommand{\bl}{{\bf l}}\newcommand{\bll}{l}
\newcommand{\frsw}{\mathfrak{sw}}
\newcommand{\frd}{\mathfrak{d}}\newcommand{\frdd}{\mathfrak{d}}
\newcommand{\fro}{\mathfrak{o}}
\newcommand{\fra}{\mathfrak{a}}
\newcommand{\bc}{{\bf c}}
\newcommand{\tc}{\widetilde{c}}
\newcommand{\wtt}{\widetilde{t}}
\newcommand{\well}{\widetilde{\aalpha}}
\providecommand{\coloneqq}{\mathrel{:=}}
\newcommand{\labelpar}{\label}
\def\C{\mathbb C}
\def\Q{\mathbb Q}
\def\R{\mathbb R}
\def\Z{\mathbb Z}
\def\N{\mathbb N}
\newcommand{\calX}{{\mathcal X}}
\newcommand{\calv}{{\mathcal V}}
\newcommand{\calC}{{\mathcal C}}
\newcommand{\calL}{{\mathcal L}}
\newcommand{\calS}{{\mathcal S}}
\newcommand{\aalpha}{\ell}
\newcommand{\frv}{\mathfrak{v}}
\newcommand{\frl}{\mathfrak{l}}
\newcommand{\fH}{\mathfrak{H}}
\newcommand{\red}{\dagger}
\newcommand{\bms}{\mbox{\boldmath$s$}}
\title{Ehrhart theory of polytopes and Seiberg--Witten invariants of plumbed 3--manifolds}
\author{Tam\'as L\'aszl\'o}
\address{Central European University and A. R\'enyi Institute of Mathematics,
1053 Budapest,  Re\'altanoda u. 13-15, Hungary.}
\email{laszlo.tamas@renyi.mta.hu}
\thanks{T. L\'aszl\'o is supported by `Lend\"ulet' and ERC programs LTDBud at R\'enyi Institute,
A. N\'emethi is partially supported by OTKA Grant 100796.}
\author{Andr\'as N\'emethi}
\address{A. R\'enyi Institute of Mathematics, 1053 Budapest,  Re\'altanoda u. 13-15,  Hungary.}
\email{nemethi.andras@renyi.mta.hu}
\keywords{3--manifolds, $\Q$--homology spheres, plumbed 3--manifolds,
 Seiberg--Witten invariant, Ehrhart theory, equivariant Ehrhart polynomials, affine monoids, polytopes,
 periodic constant, surface singularities}
\subjclass[2000]{Primary. 14E15,  32Sxx, 57M27, 52B20 Secondary. 06F05,
14Bxx, 57R57, 58Kxx.}
\date{}
\begin{document}

\maketitle

\pagestyle{myheadings} \markboth{{\normalsize
T. L\'aszl\'o and A. N\'emethi}}{{\normalsize  Ehrhart theory and Seiberg--Witten invariants}}

\begin{abstract}
Let $M$  be a rational homology sphere
plumbed 3--manifold associated with a connected negative definite plumbing graph.
We show that its Seiberg--Witten invariants equal certain coefficients of
an equivariant multivariable Ehrhart polynomial. For this,
we construct the corresponding polytopes from the plumbing graphs together with an action of
$H_1(M,\Z)$ and we develop Ehrhart theory for them.
At an intermediate level we define the `periodic constant' of multivariable series and establish their properties. In this way, one identifies the Seiberg--Witten invariant of a plumbed 3--manifold,
the periodic constant of its `combinatorial zeta--function', and a coefficient of the
associated Ehrhart polynomial.
We make detailed presentations for graphs with at most two nodes. The two node case
 has surprising connections with the theory of affine monoids of rank two.
\end{abstract}

\section{Introduction}\label{sec:introduction}

\subsection{} The main motivation of the present article is the combinatorial computation
of the Seiberg--Witten invariants of negative definite plumbed 3--manifolds.
The final output is the identification of these invariants
with certain coefficients of a multivariable equivariant Ehrhart polynomial.

Let $\Gamma$ denote the  connected negative definite decorated plumbing graph with vertices $\cV$,
which determines the oriented  plumbed 3--manifold $M=M(\Gamma)$.
We assume that $\Gamma$ is a tree, and all the plumbed surfaces have genus zero, that is, $M$
is a rational homology sphere. We denote by  $\frsw_{\sigma}(M)$
the Seiberg--Witten invariants of $M$
indexed by the $spin^c$--structures $\sigma$ of $M$.

In the last years
several  combinatorial expressions were established regarding the Seiberg--Witten invariants.
In \cite{Nico5} Nicolaescu  proved (based on the surgery
formulas of \cite{MW}) that they are equivalent with Turaev's torsion
normalized by the Casson--Walker invariant. In terms of $\Gamma$,
 a combinatorial formula for the Casson--Walker invariant
 can be deduced from Lescop's book \cite{Lescop},
while  the Turaev's torsion  is determined in \cite{NN1} in terms of a Dedekind--Fourier sum.

For some special graphs,  when the Heegaard--Floer homology is determined,
we obtain the Seiberg--Witten invariant as the normalized Euler characteristic of the
Heegaard--Floer homology \cite{OSzP,OSz,OSz7}.
They can be determined inductively by surgery formulae as well, see e.g.
\cite{OSzP,NOSZ,Rus}. \cite{BN} provides a different type of surgery formula (which is not
induced by an exact triangle, but involves  the periodic constant of a series ---
more in the spirit of the present work).
In parallel, one can rely on the lattice cohomology too (introduced in \cite{NOSZ,NLC}):
in \cite{NJEMS} the second author proved that the Seiberg--Witten invariant is the normalized
Euler characteristic of the lattice cohomology of $M$. Hence, the surgery formulae \cite{NES}, and
closed formulae for specific families \cite{NR,trieste} provide further examples.

\subsection{} The starting point of the present article is the result of \cite{NJEMS}, when the
Seiberg--Witten invariant appears as the {\it periodic constant of a multivariable series}.
Next we provide some details.

Let us consider the plumbed 4--manifold $\widetilde{X}$ associated with
$\Gamma$. Its second
homology $L$ is freely generated by the 2--spheres $\{E_v\}_{v\in\cV}$, and its
second cohomology $L'$
by the (anti)dual classes $\{E^*_v\}_{v\in\cV}$; the intersection
form $I=(\,,\,)$ embeds $L$ into $L'$.
 Set $x^2:=(x,x)$.

Let $K\in L'$ be the canonical class (given by the adjunction relations),
$\widetilde{\sigma}_{can}$ the canonical
$spin^c$--structure on $\widetilde{X}$ with $c_1(\widetilde{\sigma}_{can})=-K$,
and $\sigma_{can}\in \mathrm{Spin}^c(M)$ its restriction on $M$. Set $H:=H_1(M,\Z)=L'/L$.
Then  $Spin^c(M)$ is an $H$--torsor, with action denoted by $*$.

Next,  consider the  multivariable
Taylor expansion $Z(\bt)=\sum p_{l'}\bt^{l'}$ at the  origin of
\begin{equation*}\prod_{v\in \cV} (1-\bt^{E^*_v})^{\delta_v-2},\end{equation*}
where for any $l'=\sum _vl_vE_v\in L'$ we write
$\bt^{l'}=\prod_vt_v^{l_v}$, and $\delta_v$ is the valency of $v$.
 This lives in $\setZ[[L']]$, the submodule of
formal power series $\setZ[[\bt^{\pm 1/d}]]$ in variables $\{t_v^{\pm 1/d}\}_v$, where $d=\det(-I)$.
It has a natural decomposition $Z(\bt)=\sum_{h\in H}Z_h(\bt)$, where $Z_h(\bt)=\sum_{[l']=h}p_{l'}\bt^{l'}$ (where $[l']$ is the class of $l'$).
 Then $\frsw_{-h*\sigma_{can}}(M)$ can be deduced from $Z_h$ as follows  \cite{NJEMS}.

Assume that  $l'=\sum_va_vE^*_v$ satisfies $a_v\geq -(E_v^2+1)$. Then
\begin{equation}\label{eq:INTR1}
\sum_{l\in L,\, l\not\geq 0}p_{l'+l }=
-\frac{(K+2l')^2+|\cV|}{8}-\frsw_{[-l']*\sigma_{can}}(M).\end{equation}
The left hand side appears as a {\it counting function}
of the coefficients of $Z_h$ associated with a special {\it truncation}, while the right hand side is a
multivariable quadratic polynomial whose {\it free term} is the  normalized Seiberg--Witten invariant.
In order to guarantee the validity of the formula, the vector $l'$ should sit in a special
{\it chamber} described by the inequalities of the assumption. This, after we
establish the necessary  bridges,
will read as follows: `the third degree' coefficient of a multivariable
Ehrhart polynomial associated with a certain polytope and specific chamber can be
identified with the SW invariant.

In fact, the way how one recovers the needed  information from the series can be done at several
levels. The first one is entirely at the level of series (or Taylor expansions of rational functions).
We develop a theory which associates with any series the counting function of its coefficients
(given by a truncation of the monomials) --- like the right hand side of (\ref{eq:INTR1}).
This, usually is a piecewise quasipolynomial. Once we fix a chamber, the free term of the counting
function is the so called `periodic constant' (denoted by ${\rm pc}$). In this terminology,
 the Seiberg--Witten invariant can be interpreted
as the {\it multivariable periodic constant} ${\rm pc}(Z)$
of the series $Z(\bt)$, where the chosen chamber is described by the inequalities of the
assumption (a part of the `Lipman cone'). The `periodicity'
is related with the quasipolynomial behavior of the counting function.)
The  `periodic constant' of one variable  series
was introduced in \cite{NO1,Opg}, and it had several applications (see e.g. \cite{NOz,NO1,NJEMS,BN}).
Here we create the general theory, which carries necessarily several
difficult technical ingredients  (e.g. one has to choose the `right'
truncation and summation procedure of the coefficients, which in the context of general series is not automatically motivated, and also it depends on the chamber decomposition of the space of exponents).
The theory has some similarities with the theory of vector partition functions.

On the other hand, there is a more sophisticated way to generalize the idenity (\ref{eq:INTR1}) too.

From any Taylor  expansion of a multivariable rational function
 with  denominator of type  $\prod_i(1-\bt^{a_i})$  we construct a polytope
situated in a lattice which carries also a representation of a finite abelian group $H$.
Associated with these data
 we consider the equivariant multivariable Ehrhart  piecewise quasipolynomials,
whose existence, main properties (like the Ehrhart--MacDonald--Stanley type reciprocity law
 or chamber decompositions)
 will also be established. This applied to the series $Z(\bt)$ above,
and to the quasipolynomial of those chambers which belong to the Lipman cone shows that
 the first three  top--degree coefficients (at least)
will carry gemometrical/topological meaning, including the SW invariants of the link.
(This coefficient identifications, and in fact
(\ref{eq:INTR1}) too, supplies an additional addendum to the intimate relationship between
lattice point counting  and the Riemann--Roch formula,   exploited in global algebraic geometry
by toric geometry.)

Here is a schematic picture of these connections and areas we target:

\begin{picture}(400,180)(0,60)
\put(10,180){\framebox(100,40){}}
\put(60,207){\makebox(0,0){\scriptsize{plumbing graphs}}}
\put(60,192){\makebox(0,0){\scriptsize{with $L'/L=H$}}}

\put(10,100){\framebox(100,40){}}
\put(60,127){\makebox(0,0){\scriptsize{plumbed 3--manifolds }}}
\put(60,112){\makebox(0,0){\scriptsize{with $H_1(M,\Z)=H$}}}

\put(60,150){\vector(0,1){20}}\put(60,170){\vector(0,-1){20}}
\put(60,90){\vector(0,-1){20}}
\put(60,60){\makebox(0,0){\scriptsize{$\Q[H]$}}}
\put(72,80){\makebox(0,0){\scriptsize{$\frsw$}}}
\put(120,200){\vector(1,0){20}}
\put(130,210){\makebox(0,0){\scriptsize{$Z(\bt)$}}}
\put(130,60){\makebox(0,0){\scriptsize{$=$}}}
\qbezier(140,170)(100,130)(170,75)
\put(170,75){\vector(4,-3){10}}
\put(150,80){\makebox(0,0){\scriptsize{${\rm  pc}$}}}
\put(150,180){\framebox(100,40){}}
\put(200,207){\makebox(0,0){\scriptsize{equivariant series}}}
\put(200,192){\makebox(0,0){\scriptsize{$Z(\bt)=\sum_{h\in H}Z_h(\bt)[h]$}}}

\put(150,100){\framebox(100,40){}}
\put(200,127){\makebox(0,0){\scriptsize{counting function}}}
\put(200,112){\makebox(0,0){\scriptsize{of the coefficients}}}

\put(220,165){\makebox(0,0){\scriptsize{special}}}
\put(225,155){\makebox(0,0){\scriptsize{truncation}}}
\put(222,80){\makebox(0,0){\scriptsize{free term}}}
\put(200,170){\vector(0,-1){20}}
\put(200,90){\vector(0,-1){20}}
\put(200,60){\makebox(0,0){\scriptsize{$\Q[H]$}}}

\put(260,200){\vector(1,0){50}}
\put(285,210){\makebox(0,0){\scriptsize{denominator}}}
\put(285,190){\makebox(0,0){\scriptsize{of $Z$}}}
\put(285,60){\makebox(0,0){\scriptsize{$=$}}}
\put(310,120){\vector(-1,0){50}}

\put(320,180){\framebox(100,40){}}
\put(370,207){\makebox(0,0){\scriptsize{polytope in a lattice}}}
\put(370,192){\makebox(0,0){\scriptsize{with $H$ representation}}}

\put(320,100){\framebox(100,40){}}

\put(370,127){\makebox(0,0){\scriptsize{multivariable equivariant}}}
\put(370,112){\makebox(0,0){\scriptsize{Ehrhart polynomials}}}

\put(390,165){\makebox(0,0){\scriptsize{Ehrhart}}}
\put(390,155){\makebox(0,0){\scriptsize{theory}}}
\put(395,80){\makebox(0,0){\scriptsize{`third coeff.'}}}
\put(370,170){\vector(0,-1){20}}
\put(370,90){\vector(0,-1){20}}
\put(370,60){\makebox(0,0){\scriptsize{$\Q[H]$}}}

\end{picture}

\subsection{} The number of terms in the denominator $\prod_i(1-\bt^{a_i})$
of the series equals the number of variables of the corresponding partition function
(associated with vectors $a_i$), and it is also
the rank of the lattice where the corresponding polytope sit.
In the case of the series $Z(\bt)$ associated with plumbing graph, this is the number of
{\it end vertices} of $\Gamma$.
On the other hand, the number of variables of $Z(\bt)$ is the number $|\calv|$
of vertices of $\Gamma$.
Furthermore, in the Ehrhart theoretical part, the associated (non--convex) polytope will be a
union of $|\calv|$ simplicial polytopes. Hence, with the number of vertices, the number of facets and the
complexity of the polytope  increases considerably as well.

Nevertheless,  the Reduction Theorem \ref{th:REST} eliminates a part of this abundance of
parameters: it says that from the periodic constant point of view, the number of variables of the series,
and also the number of simplicial polytopes in the union, can be reduce
 to the number of {\it nodes} of the graph.
Hence, in fact, the complexity level is measured  by the number of nodes.

In the body of the article, besides the general theory,  we make detailed computations for
graphs with less than two nodes.
Even in the special case of graphs without nodes (that is, the case of lens spaces)
 the  description of the equivariant Ehrhart quasipolynomials is new.
 In the one node case (start shaped graphs)
we provide a detailed presentation of all the involved (SW and Ehrhart) invariants, and
 we establish closed formulae in terms of the Seifert invariants. Here we make connection
 with already known topological results regarding the Seiberg--Witten invariants of Seifert
   3--manifolds, and also with analytic invariants of weighted homogeneous singularities.

In  the two node case  again we make complete presentations in terms of the analogs of the
Seifert invariants of the chains and star--shaped subgraphs,  including closed formulae for $\frsw(M)$.
But, this case has a very interesting additional surprise
in store. It turns out that the corresponding combinatorial series $Z(\bt)$ associated with
$\Gamma$, reduced to the two variables of the nodes, is the Hilbert (characteristic)
series of  an affine monoid of rank two (and some of its modules).
In particular, the Seiberg--Witten invariant appears as the periodic constant
of Hilbert series associated with affine monoids (and certain modules indexed by $H$), and,
in some sense, measures the non--normality of these monoids.

\subsection{} It is important to emphasize that the origin (and main motivation)
of the identity (\ref{eq:INTR1}) was an analytic identity.
Recall that the manifolds $M$  appear as links of complex normal surface singularities, and
several of the above  objects have their analytic counterparts.
For example, the analogue of the series $Z(\bt)$  is the
Hilbert series associated with the multivariable equivariant divisorial filtration of the
local ring of the singular germ, and its equivariant periodic constants are  the
 equivariant geometric genera. In the body of the paper we emphasize this parallelism
 as well, whenever the corresponding analytic invariants coincide with the topological ones.
 This happens e.g. in the case of star--shaped graphs and the weighted homogeneous
 analytic structures carried by them. For further relations with analytic structures
 (e.g. for the Seiberg--Witten
 Invariant Conjecture targeting these type of connections),  see  \cite{NN1,Line}.

The relevant terminology and additional connections with theory of complex
singularities can be found in \cite{AGV,CHR,CDG,EN,Nfive,CDGb,coho3}, its connection with 
 Seiberg Witten theory
in \cite{BN,BN07,NOSZ,trieste,NLC,NJEMS,NN1,NN2,Nico5}.  
For some results in Ehrhart theory,  relevant to the present work, see
\cite{Bar,BP,Beck_c,Beck_m,Beck_p,BR1,BR2,BDR,CL,DR}, while for  partition
functions, see \cite{BV,SZV,Str}.

\subsection{} The titles of the sections are the following; they show also the organization of the paper.

2. \ Normal surface singularities. The main motivation

3. \ Equivariant multivariable Ehrhart theory

4. \ Multivariable rational functions and their periodic constants

5. \ The case of rational functions associated with plumbing graphs

6. \ The one--node case, star--shaped plumbing graphs

7. \ The two--node case

8. \ Ehrhart theoretical interpretation of the SW invariant (the general case).

\section{Normal surface singularities. The main Motivation}
\labelpar{sec:main-results}

\subsection{Surface singularities and their links and
graphs}\labelpar{ss:11} \

\vspace{2mm}

\noindent Let \((X,o)\) be a complex normal surface singularity
whose {\it link $M$ is a rational homology sphere}.  Let
\(\pi:\widetilde{X}\to X\) be a good resolution with dual graph
\(\Gamma\) whose vertices are denoted by $\cV$. Hence  \(\Gamma\)
is a tree and all the irreducible exceptional divisors have genus
\(0\). We will write  $s$, or $|\cV|$, for the number of vertices
and  $H:=H_1(M,\Z)$.

Set \(L \coloneqq H_2 ( \widetilde{X},\setZ )\). It is freely
generated by the classes of the irreducible exceptional curves
\(\{E_v\}_{v\in\cV}\). $L$ will also be identified with the group of
integral cycles supported on $E=\pi^{-1}(o)$. We set
$I_{vw}=(E_v,E_w)$. The vertex $v$ of the graph is decorated by $I_{vv}$.
The intersection matrix $I=\{I_{vw}\}$
is negative definite, and any connected plumbing graph
with negative definite intersection form appears in this way for some
singularity. The graph may also serve as the plumbing graph of the link
$M=\partial \widetilde{X}$. In this case $\widetilde{X}$ is the plumbed
4--manifold associated with $\Gamma$, and one  might consider
this topological starting setup instead of the analytic one.


If  \(L'\) denotes
\(H^2( \widetilde{X}, \setZ )\), then the intersection form
 provides an embedding \(L \hookrightarrow
L'\) with factor $H^2(\partial
\widetilde{X},\setZ)\simeq H$; $[l']$ denotes the class of $l'$.
The form $(\,,\,)$ extends to
$L'$ (since $L'\subset L\otimes \setQ$).
The module $L'$ over $\setZ$ is freely generated by the (anti-)duals $\{E_v^*\}_v$, where we
prefer the convention $ ( E_v^*, E_w) =  -1 $ for $v = w$, and
$0$ otherwise.
We write $\det(\Gamma):=\det(-I)$. The inverse of $I$ has entries
 $(I^{-1})_{vw}=(E_v^*,E^*_w)$, all of them are negative. Furthermore,
 cf. \cite[page 83 and \S 20]{EN},
 \begin{equation}\label{eq:DETsgr}
  \begin{split}
 \mbox{ $-|H|\cdot (E_v^*,E^*_w)$
equals the determinant of the subgraph obtained}\\
\mbox{from $\Gamma$ by eliminating the
shortest path connecting $v$ and $w$.}\end{split}\end{equation}
The {\it canonical class} $K\in L'$ is defined by the
{\it adjunction formulae}
\begin{equation}\label{eq:adjun}
(K+E_v,E_v)+2=0 \ \ \ \mbox{ for all $v\in\cV$.}
\end{equation}
We set $\chi(l'):=-(l',l'+K)/2$ for any $l'\in L'$. If $l\in L$ is
effective then by  Riemann-Roch theorem
$\chi(l)=h^0(\cO_l)-h^1(\cO_l)$. The integer $\chi(l')$ has
similar analytic interpretation via line bundles of
$\widetilde{X}$, cf. \cite[2.2.8]{trieste}. The expression
$K^2+|\cV|$ will appear in several formulae.  One has the following combinatorial
expression  in terms of the graph, cf. \cite{NN1}:
\begin{equation}\label{eq:K2}
K^2+|\cV|=\sum_{v\in\cV}(E_v,E_v)+3|\cV|+2+\sum_{v,w\in\cV}\, (2-\delta_v)(2-\delta_w)I^{-1}_{vw},
\end{equation}
where $\delta_v$ is the valency of the vertex $v$.

For $l_1,l_2\in L\otimes \Q$ one writes $l_1\geq l_2$ if
$l_1-l_2=\sum r_vE_v$ with all $r_v\in\setQ_{\geq 0}$. Denote by $\cS'$  the Lipman
 cone $\{l'\in L'\,:\, (l',E_v)\leq 0 \ \mbox{for all
$v$}\}$.  It is generated over $\setZ_{\geq 0}$ by the
elements $E_v^*$. Since  {\em all the entries } of $E_v^*$
are {\em strict} positive, for any fixed $a\in L'$ one has:
\begin{equation}\label{eq:finite}
\{l'\in \cS'\,:\, l'\ngeq a\} \ \ \mbox{is finite}.
\end{equation}
For any class $h\in H$ there exists a unique minimal element of
$\{l'\in L'\,:\,[l']=h\}\cap \calS'$, cf. \cite[5.4]{NOSZ}, it
will be denoted by $s_h$. Furthermore, we set \,$\square=\{\sum_v
l'_vE_v\in L'\,:\, 0\leq l'_v <1\}$ for the `semi-open cube', and
for any $h\in H=L'/L$ we consider the unique representative
$r_h\in \square$ with $[r_h]=h$. One has  $s_h\geq r_h$, and
usually $s_h\not=r_h$ (see e.g. \cite[4.5]{trieste}). Moreover,
using the generalized Laufer computation sequence of
\cite[4.3.3]{trieste} connecting $-r_h$ with $-s_h$ one gets
\begin{equation}\label{chiineq}
\chi(s_h)\leq \chi(r_h).\end{equation}

Denote by $\theta:H\to\widehat{H}$ the isomorphism $[l']\mapsto e^{2\pi i (l',\cdot )}$ of $H$
with its Pontrjagin dual $\widehat{H}$.

For more details on the resolution graphs see e.g. \cite{Nfive,NOSZ,trieste}.

\bekezd\labelpar{ss:SW}{\bf $\mathbf{Spin^c}$--structures and the Seiberg--Witten invariant of $M$.}
Let $\widetilde{\sigma}_{can}$ be the {\it canonical $spin^c$--structure} on $\widetilde{X}$; its
first Chern class $c_1( \widetilde{\sigma}_{can})$ is $-K\in L'$, cf. \cite[p.\,415]{GS}.
The set of $spin^c$--structures $\mathrm{Spin}^c(\widetilde{X})$ of $\widetilde{X}$ is an $L'$--torsor; if we denote
the $L'$--action by $l'*\widetilde{\sigma}$, then $c_1(l'*\widetilde{\sigma})=c_1(\widetilde{\sigma})+2l'$.
Furthermore,  all the $spin^c$--structures of $M$ are obtained by restrictions from $\widetilde{X}$.
$\mathrm{Spin}^c(M)$ is an $H$--torsor, compatible with the restriction and the projection $L'\to H$.
The {\it canonical $spin^c$--structure} $\sigma_{can}$ of $M$ is the restriction of $\widetilde{\sigma}_{can}$.

We denote the Seiberg--Witten invariant by $\frsw:\mathrm{Spin}^c(M)\to\setQ$, $\sigma\mapsto \frsw_\sigma$.

\subsection{Motivation: $\frsw_\sigma(M)$  as the constant term of a
`combinatorial Hilbert series'.}\labelpar{SW} \

\vspace{2mm}

\noindent Consider the  multivariable Taylor expansion
$Z(\bt)=\sum p_{l'}\bt^{l'}$ at the  origin of
\begin{equation}\label{eq:INTR}\prod_{v\in \cV} (1-\bt^{E^*_v})^{\delta_v-2},\end{equation}
where for any $l'=\sum _vl_vE_v\in L'$ we write
$\bt^{l'}=\prod_vt_v^{l_v}$ and $\delta_v$ is the valency of $v$ as above.
 This lives in $\setZ[[L']]$, the submodule of
formal power series $\setZ[[\bt^{\pm 1/|H|}]]$ in variables $\{t_v^{\pm 1/|H|}\}_v$.
\begin{theorem}\labelpar{th:JEMS} \cite{NJEMS} \ Fix some $l'\in L'$.
Assume that for any  $v\in\cV$ the  $E^*_v$--coordinate  of $l'$
is larger than or equal to $-(E_v^2+1)$. Then
\begin{equation}\label{eq:SUM}\sum_{l\in L,\, l\not\geq 0}p_{l'+l }=
-\frsw_{[-l']*\sigma_{can}}(M)-\frac{(K+2l')^2+|\cV|}{8},
\end{equation}
where
$*$ denotes the torsor action of $H$ on $\mathrm{Spin}^c(M)$.
In particular,
$$-\frsw_{[-l']*\sigma_{can}}(M)-\frac{K^2+|\cV|}{8}$$
appears as the constant term of
a `combinatorial multivariable Hilbert polynomial' (the right hand side of (\ref{eq:SUM})).
\end{theorem}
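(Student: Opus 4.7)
My plan is to reduce the identity to the lattice cohomology interpretation of the Seiberg--Witten invariant. Using $\chi(l')=-(l',l'+K)/2$ one computes $(K+2l')^2=K^2-8\chi(l')$, so the right hand side of (\ref{eq:SUM}) rewrites as $\chi(l')-\frsw_{[-l']*\sigma_{can}}(M)-(K^2+|\cV|)/8$. By the main theorem of \cite{NJEMS} recalled in the introduction, the latter two terms together equal the normalised Euler characteristic $\Eul(\bH^*(\Gamma,[-l']*\sigma_{can}))$ of the lattice cohomology. Hence the claim reduces to the purely combinatorial identity
\begin{equation*}
\sum_{l\in L,\, l\not\geq 0} p_{l'+l} \;=\; \chi(l') + \Eul\bigl(\bH^*(\Gamma,[-l']*\sigma_{can})\bigr).
\end{equation*}

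First I would interpret the Taylor coefficients $p_{l'+l}$ combinatorially on the cubical complex that defines $\bH^*$. The product $\prod_v(1-\bt^{E_v^*})^{\delta_v-2}$ is tailored so that its Koszul-type expansion matches, vertex by vertex, the cellular structure on $L$ with weight function $\chi$: the exponent $\delta_v-2$ is precisely the local Euler-characteristic contribution of the vertex $v$ to the increments $\chi(l)-\chi(l-E_v)$ via the adjunction formula (\ref{eq:adjun}). Packaging these contributions over all cubes should exhibit $Z(\bt)$ as the generating series whose coefficients encode the cellular weights of the cube complex computing $\bH^*$.

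Next I exploit (\ref{eq:finite}): the support of $Z$ lies in $\cS'$, and $\{l'\in\cS':l'\not\geq a\}$ is finite, so the sum $\sum_{l\not\geq 0}p_{l'+l}$ is actually a finite sum. The hypothesis $a_v\geq-(E_v^2+1)$ is the sharp condition placing $l'$ past the first wall of the appropriate chamber of the Lipman cone, and this is precisely where the $\{l\not\geq 0\}$ truncation of $Z$ coincides with the cubical computation of $\bH^*$. Working $H$-equivariantly on each $Z_h(\bt)$, one then reduces via the minimal representatives $s_h\in\cS'$ and $r_h\in\square$ of each $H$-class, with the compatibility (\ref{chiineq}) supplying the sign/comparison needed to identify the truncated sum with $\chi(l')+\Eul(\bH^*)$.

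The main obstacle will be the first step: showing canonically in $l'$ that the coefficient-counting function $l'\mapsto\sum_{l\geq 0}p_{l'+l}$ stabilises in the interior of $\cS'$ to a quasi-polynomial whose leading piece is $-\chi(l')$ and whose correction term is $-\Eul(\bH^*)$. The natural tool is the generalised Laufer-type computation sequence of \cite[4.3]{trieste}, which provides a combinatorial bridge between the $L$-lattice counting driven by $\chi$ and the $L'$-equivariant structure of $Z(\bt)$; once this bridge is in place, passing from $\{l\geq 0\}$ to the complement $\{l\not\geq 0\}$ cancels the polynomial part and yields the asserted free-term identity.
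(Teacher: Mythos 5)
The paper does not prove Theorem~\ref{th:JEMS}; it is cited verbatim from \cite{NJEMS}, so there is no in-paper proof to compare against. Your opening reduction is correct: $(K+2l')^2=K^2-8\chi(l')$ is an immediate consequence of $\chi(l')=-(l',l'+K)/2$, and invoking the identification of $-\frsw_{\sigma}(M)-(K^2+|\cV|)/8$ with the normalized Euler characteristic of lattice cohomology (which is the other main theorem of \cite{NJEMS}) is a legitimate and indeed natural way to reorganize the claim. At that point the content is concentrated in the single combinatorial identity
\begin{equation*}
\sum_{l\in L,\, l\not\geq 0} p_{l'+l} \;=\; \chi(l') + \Eul\bigl(\bH^*(\Gamma,[-l']*\sigma_{can})\bigr),
\end{equation*}
and you correctly identify this as the crux.

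However, the proposal never establishes this identity; it only gestures at it. The phrases ``its Koszul-type expansion matches, vertex by vertex, the cellular structure'', ``Packaging these contributions over all cubes should exhibit $Z(\bt)$ as the generating series'', and your own flag ``The main obstacle will be the first step'' all mark exactly the place where an argument is required and none is given. In particular: (i) the asserted matching of the exponent $\delta_v-2$ with local Euler-characteristic increments of the cube complex is stated, not derived, and it is not clear from what you wrote how the weights $\chi(l)-\chi(l-E_v)$ actually reassemble into the multivariable product $\prod_v(1-\bt^{E_v^*})^{\delta_v-2}$; (ii) the role of the hypothesis $a_v\geq-(E_v^2+1)$ is described qualitatively as ``past the first wall'', but the stabilization argument (in \cite{NJEMS} a Laufer-type computation sequence on $L$ showing the truncated coefficient sum becomes a polynomial in $l'$ in this region) is precisely what you would need to write down, and it is absent; (iii) the $H$-equivariant refinement via $s_h$, $r_h$ and (\ref{chiineq}) is again invoked but not carried out. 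As it stands the proposal is a reasonable outline of a strategy consistent with \cite{NJEMS}, but it is not a proof: the combinatorial identity that carries all the weight is left as a heuristic.
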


Since $Z(\bt)$ is supported on the Lipman cone, by (\ref{eq:finite}) the sum
(\ref{eq:SUM}) is finite.

Note also that the series $Z(\bt)$  decomposes in several series indexed by elements of
$H$. Indeed,  $Z(\bt)=\sum_hZ_h(\bt)$, where $Z_h(\bt)=\sum_{l':[l']=h}p_{l'}\bt^{l'}$.
The identity (\ref{eq:SUM}) involves only $Z_{[l']}$.

\vspace{2mm}

In fact, the above topological
theorem \ref{th:JEMS} was motivated by a similar theorem which
targets the analytic invariants of the singularity. In order to
have  a complete picture and possibility to interpret  the subsequent
results via analytic invariants, we recall briefly this setup as
well.

\subsection{The analytic motivation: multivariable Hilbert series of divisorial
filtrations.}\labelpar{FM} \

\vspace{2mm}

\noindent One of the strongest analytic invariants of $(X,o)$ is
its {\it equivariant divisorial Hilbert series} $\cH(\bt)$. This
is defined as follows (for more details, see  \cite{coho3,CDGb}).

Fix a resolution $\pi$ of $(X,o)$ as in (\ref{ss:11}), let $c:(Y,o)\to (X,o)$ be the
universal abelian cover of $(X,o)$ with Galois group $H=H_1(M,\Z)$,  $\pi_Y :\widetilde{Y}\to Y$ the normalized
pullback of $\pi$ by $c$, and $\widetilde{c}:\widetilde{Y}\to \widetilde{X}$ the morphism which covers $c$.
Then $\cO_{Y,o}$ inherits  the {\em divisorial multi-filtration}:
\begin{equation*}\label{eq:03}
\cF(l'):=\{ f\in \cO_{Y,o}\,|\, {\rm div}(f\circ\pi_Y)\geq \widetilde{c}^*(l')\}.
\end{equation*}
Let $\hh(l') $ be the dimension of the $\theta([l'])$--eigenspace
of $\cO_{Y,o}/\cF(l')$. Then  the {\em equivariant
divisorial Hilbert series} is
\begin{equation*}\label{eq:04}
\cH(\bt)=\sum _{l'=\sum l_vE_v\in L'}
\hh(l')t_1^{l_1}\cdots t_s^{l_s}=\sum_{l'\in L'}\hh(l')\bt^{l'}\in
\setZ[[L']].
\end{equation*}
In $\cH(\bt)$ the exponents $l'$ of the terms $\bt^{l'}$  reflect the $L'/L\simeq H$ eigenspace
decomposition too.  E.g., $\sum_{l\in L}\hh(l)\bt^{l}$ corresponds
to the $H$--invariants, hence it is the {\em Hilbert series}  of
$\cO_{X,o}$ associated with the $\pi^{-1}(o)$-divisorial
multi-filtration  (see  e.g. \cite{CHR,CDG}).

If $l'$ is in the special `Kodaira vanishing zone' $l'\in -K+\cS'$,
then by vanishing (of a certain  first cohomology), and by
Riemann-Roch, one obtains (see \cite{coho3}) that the expression
 \begin{equation}\label{eq:KV}
 \hh(l')+\frac{(K+2l')^2+|\cV|}{8}
\end{equation}
{\it depends only on the class $[l']\in L'/L$ of $l'$}. The key bridge
connecting  $\cH(\bt)$ with the
topology of the link  and with  $\Gamma$   is done by the
series (cf.  \cite{CDG,CDGEq,CDGb,coho3}):
\begin{equation*}\label{eq:06}
\cP(\bt)=-\cH(\bt) \cdot \prod_v(1-t_v^{-1})\in \setZ[[L']].
\end{equation*}
Moreover, this identity
can be   `inverted' (cf. \cite[(3.2.6)]{coho3}):
\begin{equation*} \label{eq:inv}
\hh(l')=\sum_{l\in L,\, l\not\geq 0} \bar{p}_{l'+l}, \ \
\mbox{where} \ \ \cP(\bt)=\sum_{l'}\bar{p}_{l'}\bt^{l'}.
\end{equation*}
$\cP$ is supported on
$\cS'$, cf. \cite[(3.2.2)]{coho3}, hence the sum is finite, cf. (\ref{eq:finite}).
In particular, 
\begin{equation}\label{eq:KV2}
\sum_{l\in L,\, l\not\geq 0} \bar{p}_{l'+l}=-\mathrm{const}_{[-l']} -
\frac{(K+2l')^2+|\cV|}{8}
\end{equation}
for any $l'\in-K+\cS'$, where $\mathrm{const}_{[-l']}$ depends only on the class $[-l']$ of $-l'$.
The right hand side can be interpreted as a `multivariable Hilbert polynomial' of degree 2 associated with
the series $\cH(\bt)$, or with $\cP(\bt)$. Its constant term is
the 9normalized)
equivariant geometric genera of the universal abelian cover $Y$, that  is (cf. \cite{coho3})
\begin{equation}\label{eq:KV2b}
\mathrm{ dim}(H^1(\widetilde{Y},\cO_{\widetilde{Y}}\,)_{\theta(h)})=-\mathrm{const}_{[-r_h]} -
\frac{(K+2r_h)^2+|\cV|}{8}.
\end{equation}
The point is that the {\it topological candidate} of   $\cP(\bt)$ is exactly
$Z(\bt)$ from the previous subsection; they agree
for several singularities, see e.g. \cite{CDGEq,CDGb,coho3}. The identification of their
constant terms (for `nice' analytic structures) is the subject of the
`Seiberg--Witten Invariant Conjecture', cf. \cite{NN1,Line,trieste}.
Hence, when $\calP(\bt)=Z(\bt)$,  then $\mathrm{const}_{[-l']}=
\frsw_{[-l']*\sigma_{can}}(M)$ too, and (\ref{eq:KV2b}) creates the bridge between the combinatorial/
topological Seiberg--Witten theory of the analytic counterpart.
The identity $\calP(\bt)=Z(\bt)$ is valid e.g. for splice quotient
singularities \cite{coho3}, which include all the rational singularities (when the links $M$
are $L$-spaces), minimally elliptic singularities, or weighted homogeneous singularities.

\section{Equivariant multivariable Ehrhart theory}\labelpar{ss:PPET}

\subsection{Preparatory results on Ehrhart theory}\

\vspace{2mm}

\noindent
In this section we generalize the classical Ehrhart theory to the equivariant multivariable version,
involving non-convex polytopes,
which will fit with our comparison with the equivariant multivariable series provided by plumbing graphs.

Let us start with a $d$--dimensional lattice $\calX\subset \R^d$ and a group homomorphism $\rho:\calX\to
\fH$
to a finite abelian group $\fH$. We consider a {\it rational  vector--dilated
polytope} with parameter  $\bl=(\bl_1,\ldots, \bl_r)$, $\bl_v\in \Z^{m_v}$,
\begin{equation}\label{eq:POL}P^{(\bl)}=\bigcup_{v=1}^rP^{({\bl}_v)}_v, \ \ \mbox{where} \ \
P^{({\bl}_v)}_v=\{{\bf x}\in\R^d\,:\, {\bf A}_v{\bf x}\leq
\bl_v\},\end{equation} with ${\bf A}_v\in M_{m_v,d}(\Z)$ (integral
$m_v\times d$ matrices). If $\{A_{v,\lambda i}\}_{\lambda i}$ and
$\{\bll_{v,\lambda}\}_\lambda$  are the entries of ${\bf A}_v$ and
$\bl_v$, then the inequality ${\bf A}_v{\bf x}\leq \bl_v$ in
(\ref{eq:POL}) reads as $\sum_{i=1}^dx_iA_{v,\lambda i}\leq
\bll_{v,\lambda}$ for any $\lambda=1,\ldots,m_v$.

We will vary the parameter $\bl$ in some `chambers' (described
below for the needed cases) such that the polytopes $P^{(\bl)} $
remain  combinatorial equivalent when $\bl$ runs in the same chamber.
This means that  there is a bijection
between their faces that preserves the inclusion relation. (This
implies that they are connected by homeomorphisms, which preserve
the stratification of the faces.) We also suppose that $P^{(\bl)}$
is homeomorphic to a $d$--dimensional manifold. Denote the set of
all closed facets of $P^{(\bl)}$ by $\calF$ and let $\calT$ be a
subset of $\calF$, such that $\cup_{F^{(\bl)}\in \calT}F^{(\bl)}$
is homeomorphic to a $(d-1)$--manifold. Then  we have the
following generalization to the {\it equivariant version} of
results of Stanley \cite{S74}, McMullen \cite{M78} and Beck
\cite{Beck_c,Beck_m}.
\begin{theorem}\labelpar{th:REC}
For any $h\in \fH$ and $\calT\subset \calF$ let
\begin{equation}\label{eq:REC}
\calL_h({\bf A}, \calT,\bl)
:=\mbox{cardinality of}\ \big(\big(P^{(\bl)}\setminus \cup_{F^{(\bl)}\in \calT}
F^{(\bl)}\big)\cap \rho^{-1}(h)\big).\end{equation}

(a) If $\bl$ moves in some region in such a way that
$P^{(\bl)} $ stays  combinatorially stable then the expression
$\calL_h({\bf A},\calT,\bl)$ is a quasipolynomial in $\bl\in \Z^{\sum m_v}$.

(b) For a fixed combinatorial type of $P^{(\bl)} $ and for a fixed
$\calT$, the quasipolynomials $\calL_h({\bf A},\calT,\bl)$ and
$\calL_{-h}({\bf A},\calF\setminus \calT,\bl)$ satisfy the
Ehrhart--MacDonald--Stanley reciprocity law
\begin{equation}\label{eq:EMDS}
\calL_h({\bf A},\calT,\bl) =(-1)^d\cdot \calL_{-h}({\bf
A},\calF\setminus \calT,\bl)|_{\mbox{\tiny{\rm{replace  $\bl$ by
$-\bl$}}}}.\end{equation}
\end{theorem}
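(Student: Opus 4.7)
The plan is to reduce both assertions to the classical (convex, non-equivariant) Ehrhart theory by two linear devices: inclusion-exclusion on the covering $P^{(\bl)}=\bigcup_{v=1}^r P^{(\bl_v)}_v$, and Fourier decomposition on $\fH$. Both devices are $\Z$-linear, so quasipolynomiality and the reciprocity sign $(-1)^d$ will be inherited from the classical case.

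First I would write
\begin{equation*}
\calL_h({\bf A},\calT,\bl)=\sum_{\emptyset\neq J\subseteq\{1,\ldots,r\}}(-1)^{|J|+1}\calL^J_h(\bl),
\end{equation*}
where $\calL^J_h(\bl)$ counts the lattice points of $\rho^{-1}(h)$ lying in the convex rational polytope $Q^{(\bl)}_J:=\bigcap_{v\in J}P^{(\bl_v)}_v$, with the appropriate part of $\bigcup_{F\in\calT}F$ removed. Combinatorial stability of $P^{(\bl)}$ on the chamber forces combinatorial stability of each $Q^{(\bl)}_J$, and each $Q^{(\bl)}_J$ is defined by linear inequalities whose right-hand sides depend affinely on $\bl$, fitting McMullen's vector-dilation framework. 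Next, using the orthogonality relation
\begin{equation*}
\mathbf{1}_{\rho({\bf x})=h}=\frac{1}{|\fH|}\sum_{\chi\in\widehat{\fH}}\chi(\rho({\bf x}))\,\overline{\chi(h)},
\end{equation*}
together with the fact that $\chi\circ\rho$ extends to a character ${\bf x}\mapsto e^{2\pi i\langle u_\chi,{\bf x}\rangle}$ of the lattice $\calX$, I would rewrite each $\calL^J_h(\bl)$ as $|\fH|^{-1}\sum_\chi \overline{\chi(h)}\,S^J_\chi(\bl)$, where $S^J_\chi(\bl)$ is a weighted (exponential) Ehrhart count on $Q^{(\bl)}_J$ with the prescribed facet exclusions.

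Now classical results apply to each $S^J_\chi(\bl)$ separately: Stanley-McMullen vector-dilated Ehrhart theory, together with its exponential extension due to Brion-Vergne and Barvinok, yields quasipolynomiality in $\bl$, proving (a); and Beck's generalization of the Ehrhart-MacDonald-Stanley reciprocity to arbitrary unions of closed facets gives
\begin{equation*}
S^J_\chi(\bl)=(-1)^{\dim Q^{(\bl)}_J}\,\overline{S^J_\chi}(-\bl)\big|_{\calT\leftrightarrow\calF\setminus\calT}.
\end{equation*}
The involution $\chi\mapsto\overline{\chi}$, after the Fourier sum, translates into $h\mapsto -h$, so reassembling by character and by the alternating inclusion-exclusion sum produces (b).

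The technical heart, and what I expect to be the main obstacle, is the bookkeeping for dimensions and for shared facets. For the sign in (b) to emerge uniformly as $(-1)^d$, only the top-dimensional $Q^{(\bl)}_J$ of dimension $d$ may contribute, since lower-dimensional intersections would in principle introduce the wrong signs $(-1)^{\dim Q_J}$. Here the hypotheses that $P^{(\bl)}$ is a $d$-manifold and $\bigcup_{F\in\calT}F$ a $(d-1)$-manifold are decisive: distinct top-dimensional pieces $P_v,P_w$ must meet along common $(d-1)$-dimensional facets belonging to $\calF$, so the terms with $|J|\geq 2$ have dimension $\leq d-1$ and their boundary contributions pair off with the facet exclusions, matching the $\calT\leftrightarrow\calF\setminus\calT$ swap on the reciprocal side. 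Making these cancellations explicit and verifying that the surviving top-dimensional pieces carry a uniform sign $(-1)^d$ is where the argument requires real care.
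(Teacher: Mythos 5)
Your approach is valid and is in fact explicitly acknowledged by the paper in the remark immediately following the theorem, which notes that the properties of $\calL_h$ can be obtained from those of their Fourier transforms $\calL_\xi$ and that the theorem ``can be deduced from \cite[\S\,4.3]{BV} too.'' The paper's own proof, however, is different and more elementary: instead of Fourier decomposition over $\widehat{\fH}$, it translates the problem to the full-rank sublattice $K=\ker\rho\subset\calX$. Specifically, one identifies $\rho^{-1}(h)\cap P_v^{(\bl_v)}$ with $\{\mathbf{y}\in K:\mathbf{A}_v\mathbf{y}\leq\bl_v-\mathbf{A}_v\mathbf{r}_h\}$ via $\mathbf{y}=\mathbf{x}-\mathbf{r}_h$ for a fixed representative $\mathbf{r}_h$ with $\rho(\mathbf{r}_h)=h$, and then applies Clauss--Loechner (for (a)) and Beck's multidimensional Ehrhart reciprocity (for (b)) to the ordinary lattice $K$. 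The $h\mapsto -h$ in (b) arises directly from using the translate by $-\mathbf{r}_h$ on the reciprocal side, with no need for the involution $\chi\mapsto\overline\chi$ or for weighted/exponential Ehrhart theory at all. What your route buys is a clean conceptual link to Brion--Vergne's vector partition function machinery; what the paper's route buys is that it stays entirely in the setting of integer lattice point counts, requires no analytic input, and makes the origin of the sign flip $h\mapsto-h$ immediately visible.

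One further point: you are right that the inclusion-exclusion bookkeeping (which $Q^{(\bl)}_J$ contribute, which facets get removed at each stage, and why only the sign $(-1)^d$ survives) is the technical heart, and your discussion of it remains a sketch. Note that the paper does not resolve this by hand either: it cites the ``standard additivity formulae'' of Beck \cite[\S\,2]{Beck_m}, which are precisely designed to handle the passage from convex pieces to the non-convex union $P^{(\bl)}$ with a chosen set of closed facets removed, under the hypotheses that $P^{(\bl)}$ is a $d$-manifold and $\bigcup_{F\in\calT}F$ a $(d-1)$-manifold. If you wish to carry out the reduction yourself rather than defer to Beck, you would need to verify that your proposed $\calL^J_h$ (lattice count on $Q_J$ with ``the appropriate part'' of $\bigcup_{F\in\calT}F$ removed) is well defined on the chamber and that the alternating sum actually collapses to the asserted formula; this is not automatic and is where the manifold hypotheses enter.
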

\begin{proof} The statements for $\fH=0$
 are identical with those of Beck from \cite{Beck_m}.
Part (a) above can be proved identically as in \cite{Beck_m}. Or,
we notice that via standard additivity formulae, cf.
\cite[\S\,2]{Beck_m}, it is enough to prove the statement for each
convex $P_v^{(\bl_v)}$. But, considering  $P_v^{(\bl_v)}$ and $K:=
\ker(\rho)$, for any ${\bf r}\in\calX$ one has the isomorphism
$$\{{\bf x}\in K+{\bf r}\,:\, {\bf A}_v{\bf x}\leq \bl_v\}\simeq
 \{{\bf y}\in K\,:\, {\bf A}_v{\bf y}\leq \bl_v-{\bf A}_v{\bf r}\}.$$
Hence \cite[Theorem 2]{CL} can  be applied, which shows (a). Next, part (b) can also be reduced
to \cite{Beck_m}. Indeed, we can reduce the discussion again to $P_v^{(\bl_v)}$. We drop
the index $v$,  we  choose ${\bf r}_h\in \calX$ with
$\rho({\bf r}_h)=h$, and  we fix some $\bl_0$.
 Then for ${\bf x}\in K\pm {\bf r}_h $ with ${\bf A}{\bf x}\leq \bl_0$ we take
${\bf y}:={\bf x}\mp {\bf r}_y$ and ${\bf k}:=\bl_0\mp {\bf A}{\bf r}_h$, which satisfy
${\bf y}\in K$ and ${\bf A}{\bf y}\leq {\bf k}$. Therefore, using \cite{Beck_m}
for this polytope and the lattice $K$,  and the natural notations:
$$\calL_h({\bf A},\calT,\bl)|_{\bl=\bl_0}=
\calL({\bf A}{\bf y}\leq {\bf m},\,\calT,{\bf y}\in K)|_{{\bf m}={\bf k}}=
$$
$$ (-1)^d\cdot \calL({\bf A}{\bf y}\leq {\bf m},\,\calF\setminus\calT,{\bf y}\in K)|_{{\bf m}=-{\bf k}}=
 (-1)^d\cdot \calL_{-h}({\bf A},\calF\setminus \calT,\bl)|_{\bl=-\bl_0}.
$$
\end{proof}
\begin{definition}
The quasipolynomial $\calL_h({\bf A}, \calT,\bl)$  considered in Theorem \ref{th:REC}, associated with a fixed
combinatorial type of $P^{({\bf l})}$, is called the {\it equivariant multivariable quasipolynomial }
associated with the corresponding data.

If we vary ${\bf l}$ in $\Z^{\sum m_v}$ (hence we allow the variation of the combinatorial type)
we obtain the  {\it equivariant multivariable piecewise quasipolynomial } $\calL_h({\bf A}, \calT,\bl)$
(see also Theorem \ref{th:PQP} and Corollary \ref{cor:Taylor} below).
\end{definition}

\begin{remark}
Parallel to the collection $\{\calL_h\}_h$ defined in
(\ref{eq:REC}) one can consider their Fourier transforms as well:
for any character $\xi\in \widehat{\fH}=\Hom (\fH,S^1)$, one defines
\begin{equation}\label{eq:REC2}
\calL_\xi({\bf A}, \calT,\bl) :=\sum \xi^{-1}(\rho({\bf x})), \ \
\mbox{sum over} \ \ {\bf x}\in (P^{(\bl)}\setminus
\cup_{F^{(\bl)}\in \calT} F^{(\bl)}\big),\end{equation} which
satisfies $\calL_\xi=\sum_h\calL_h\cdot \xi^{-1}(h), \ \ \mbox{and
} \ \ |\fH|\cdot \calL_h=\sum_\xi\calL_\xi\cdot \xi(h)$. Hence, the
above properties of $\calL_h$ can be obtained from
similar properties of $\calL_\xi$ as well. Hence, Theorem \ref{th:REC} can be
deduced from \cite[\S\,4.3]{BV} too.
\end{remark}

\begin{remark} (a) \ In the sequel we will not consider polytopes with this
high generality: our polytopes will be special ones associated with the
denominators of type $\prod_i (1-\bt^{a_i})$ of multivariable rational functions, or their
Taylor series.
In order to avoid unnecessary technical details,
the stability of the combinatorial type of $P^{({\bf l})}$, and the corresponding chamber decomposition of
$\R^{\sum m_v}$  will also be treated for this special polytopes, see \ref{bek:combtypes}.

(b) \ To avoid any confusion regarding the expression of (\ref{eq:EMDS}) we note:
the  two quasipolynomials in (\ref{eq:EMDS}) are associated with that domain of definition
(chamber) which corresponds to the fixed combinatorial type.
Usually for $-\bl$ the combinatorial type  of $P^{(\bl)} $ is
different, hence the right hand side of (\ref{eq:EMDS}) {\it does
not equal} $(-1)^d\cdot \calL_{-h}({\bf A},\calF\setminus
\calT,-\bl)$. This last expression is the value at $-\bl$ of the
quasipolynomial associated with the chamber which contains $-\bl$.
\end{remark}

\section{Multivariable rational functions and their Periodic Constants}\labelpar{s:PC}

\subsection{Historical remark: the one--variable case
 \cite[3.9]{NO1}, \cite[4.8(1)]{Opg}.}\labelpar{PC} \

 \vspace{2mm}

\noindent Let  $S(t) = \sum_{l\geq 0} c_l t^l\in \Z[[t]]$  be a
formal power series. Suppose that for some positive integer $p$,
the expression $\sum_{l=0}^{pn-1} c_l$ is a polynomial $P_p(n)$ in
the variable $n$.  Then the constant term $P_p(0)$ of $P_p(n)$ is
independent of the `period' $p$. We call $P_p(0)$ the
\emph{periodic constant} of $S$ and denote it by $\mathrm{pc}(S)$.
For example, if $l\mapsto Q(l)$ is a quasipolynomial and
$S(t):=\sum_{l\geq 0}Q(l)t^l$, then one can take for $p$ the period
of $Q$,  and one shows that $\mathrm{pc}(\sum_{l\geq
0}Q(l)t^l)=0$.

Assume that $S(t)$ is the Hilbert series associated with a graded
algebra/vector space $A=\oplus_{l\geq 0}A_l$ (i.e. $c_l=\dim
A_l$), and the series $S$ admits a Hilbert quasipolynomial $Q(l)$
(that is, $c_l=Q(l)$ for $l\gg 0$). Since the periodic constant of
$\sum_lQ(l)t^l$ is zero, the periodic constant of $S(t)$ measures
exactly the difference between $S(t)$ and its `regularized series'
$S_{reg}(t):=\sum_{l\geq 0}Q(l)t^l$. That is:
$\mathrm{pc}(S)=(S(t)-S_{reg}(t))|_{t=1}$ collecting all the
anomalies of the starting elements of $S$.

Note that $S_{reg}(t)$ can be represented by a rational function
of negative degree with denominator of type
$A(t)=\prod_i(1-t^{a_i})$, and $S(t)-S_{reg}(t)$ is a polynomial.
Conversely,  one has the following reinterpretation of the
periodic constant \cite[7.0.2]{BN}. If $\sum_lc_lt^l$ is a
rational function $B(t)/A(t)$ with $A(t)= \prod_i(1-t^{a_i})$, and
one rewrites it as $C(t)+D(t)/A(t)$ with $C$ and $D$  polynomials
and $D(t)/A(t)$ of negative degree, then $\mathrm{pc}(S)=C(1)$.
From this fact one also gets that
$\mathrm{pc}(S(t))=\mathrm{pc}(S(t^N))$ for any $N\in \Z_{>0}$. We
will refer to $C(t)$ as the {\it polynomial part} of $S$.

As an  example, consider a subset $\calS\subset\Z_{\geq 0}$ with finite complement.  Then
 $S(t)=\sum_{s\in \calS}t^s$ rewritten is $1/(1-t)-\sum_{s\not\in \calS}t^s$, hence
 $\mathrm{pc}(S)=-\#(\Z_{\geq 0}\setminus \calS)$. In particular, if $\calS$ is the
 semigroup of a local irreducible complex plane curve singularity, then $-\mathrm{pc}(S)$ is the
 delta--invariant of that germ. Our study  below   includes the generalization of  this fact to
  surface singularities.

\subsection{The setup for the multivariable  generalization}\labelpar{ss:GENRF}\

\bekezdes\label{bek:LL'}
We wish  to extend the definition of the periodic constant to the case
  of Taylor expansions at the origin  of multivariable rational functions
of type
\begin{equation}\label{eq:func}
f(\bt)=\frac{\sum_{k=1}^r\iota_k\bt^{b_k}}{\prod_{i=1}^d
(1-\bt^{a_i})} \ \ \ \ (\iota_k\in\setZ).
\end{equation}
Let us explain the notation.
Let $L$ be a lattice of rank $s$ with fixed bases  $\{E_v\}_{v=1}^s$.
Let $L'$ be an overlattice of it with same rank, $L\subset L'\subset L\otimes \Q$ with $|L'/L|=\frdd$.
Then, in  (\ref{eq:func}),
$\{b_k\}_{k=1}^r, \ \{a_i\}_{i=1}^d\in L'$ and for any
$l'=\sum_vl'_vE_v\in L'$ we write $\bt^{l'}=t_1^{l'_1}\dots
t_{s}^{l'_{s}}$.  We also assume that
{\it all the coordinates $a_{i,v}$ of $a_i$  are strict positive}.
Hence, in general, the coefficients $l'_v$ are not integral, and the
Laurent   expansion $Tf({\bt})$ of $f({\bt})$ at the origin is
$$Tf(\bt)=\sum_{l'}p_{l'}\bt^{l'}\in \Z[[t_1^{1/\frdd},\ldots, t_s^{1/\frdd}]]
[t_1^{-1/\frdd},\ldots, t_s^{-1/\frdd}]:=\Z[[\bt^{1/\frdd}]][\bt^{-1/\frdd}].$$
We also consider the natural partial ordering of $L\otimes \Q$ (defined as in  \ref{ss:11}).
If all vectors $b_k\geq 0$ then $Tf(\bt)$ is in
$\sum_{l'}p_{l'}\bt^{l'}\in \Z[[\bt_1^{1/\frdd}]]$.
Sometimes we will not make difference between $f$ and $Tf$.

\bekezdes\label{bek:LL'2} This will be extended to the following equivariant case. We fix a
finite abelian group $G$, and for each $g\in G$ a series (or rational function)
 $Tf_g\in \Z[[\bt^{1/\frdd}]][\bt^{-1/\frdd}]$ as in
\ref{bek:LL'}, and we set
$$Tf^e(\bt):=\sum_{g\in G}\, Tf_g(\bt)\cdot [g]\in \Z[[\bt^{1/\frdd}]][\bt^{-1/\frdd}][G].$$
Sometimes this equivariant extension is given automatically in the context of \ref{bek:LL'}.
Indeed, if  in \ref{bek:LL'} we set  $H:=L'/L$, and for
\begin{equation}\label{eq:fh}
Tf=\sum_{l'}p_{l'}\bt^{l'} \ \ \ \mbox{we define} \ \ \
Tf_h:=\sum_{[l']=h}p_{l'}\bt^{l'},\end{equation}
we obtain a decomposition of $Tf$ as a sum $\sum_hTf_h\in \Z[[\bt^{1/\frdd}]][\bt^{-1/\frdd}][H]$
(with $\frdd=|H|$).

In our cases we always start  with this  group $L'/L=H$ (hence
$f$ determines its decomposition $\sum_hf_h$). Nevertheless, some alterations will appear.
First, we might consider the nonequivariant case, hence we can forget the decomposition over $H$.
Another case appears as follows.  In order to simplify the rational function we will eliminate
some of its  variables (e.g., we substitute $t_i=1$ for certain indices $i$),
or we restrict $f$ to a linear subspace $V$. Then,
after this substitution,  the restricted function
$f|_{t_i=1}$ will not determine anymore the restrictions $(f_h)|_{t_i=1}$  of the `old' components $f_h$.
That is, the new pair of lattices $(L_V,L'_V)=(L\cap V,L'\cap V)$
and the `old group' $H=L'/L$ become rather independent.  In such cases we will
 keep the old group $H=L'/L$ (and the `old' decomposition  $f_h$)
without asking any compatibility with $L'_V/L_V$.

\bekezdes\label{bek:LL'3} Since all the coordinates $a_{i,v}$ of $a_i$ are strict positive,
for any $Tf(\bt)=\sum_{l'}p_{l'}\bt^{l'}$  we get a well defined  {\it counting function} of the coefficients,
$$l'\mapsto Q(l'):=\sum_{l''\not\geq l'} \, p_{l''}.$$
If $Tf=\sum_hTf_h$, then each $Tf_h$ determines a counting function  $Q_h$ defined in the same way.

If $H=L'/L$ and $Tf$ decomposes into $\sum_hTf_h$ under the law from (\ref{eq:fh}), then
\begin{equation}\label{eq:PCDEFa}
\sum_{l''\not\geq l'} \, p_{l''}\cdot [l'']=\sum _{h\in H}\,Q_h(l')[h].
\end{equation}
The definitions are motivated by formulae (\ref{eq:SUM}) and (\ref{eq:KV2}).
The functions $Q_h(l')$ will be studied in the next subsections via Ehrhart theory.

\subsection{Ehrhart quasipolynomials associated with denominators of rational functions}\labelpar{ss:EP} \

\vspace{2mm}

\noindent First we consider the
case $d>0$, the special  case  $d=0$ will  be treated  in
\ref{ss:d0}.

\bekezdes\labelpar{bek:pol} {\bf The polytope associated with $\{a_i\}_{i=1}^d$.}\label{constr:pol}
In order to run the Ehrhart theory we have first to fix the lattice $\calX$ and the representation
$\rho:\calX\to \fH$, cf. section \ref{ss:PPET}.
First, we set  $\calX=\Z^d$ and  $\alpha:\calX\to L'$ given by
$\alpha({\bf x})=\sum_{i=1}^dx_ia_i\in L'$. For $(\fH,\rho)$ there are two possibilities:

(a) $\fH=H=L'/L$ and $\rho$ is the composition
 $\calX\stackrel{\alpha}{\longrightarrow} L'\to L'/L$.

(b) $\fH=0$ and $\rho=0$.

This choice has an effect on the equivariant decomposition $f^e=\sum_gf_g[g]$ of $f$ too. In case (a)
usually we have  $G=H$ and the decomposition is given by \ref{eq:fh}. In case (b) we can take either
$G=0$ (this can happen e.g. when we forget the decomposition in case (a), and
we sum up all the components),
or we can take any $G$ (by specifying each $f_g$). In this latter case each fixed $f_g$
behaves like a function in the nonequivariant case $G=0$, hence can be treated in the same way.

Since the case (b) follows from case (a) (by forgetting the extra information from $\fH$), in the sequel
we treat the case (a), hence $\fH=G=L'/L$.

 Consider the
matrix ${\bf A}$ with column vectors $|H|a_i$ and write ${\bf A}_v$ for its rows. Then the construction of subsection \ref{ss:PPET}
can be repeated (eventually completing each ${\bf A}_v$ to assure the inequalities $x_i\geq 0$ as well).
For  $l\in \sum_vl_vE_v\in L$ consider
\begin{equation}\label{eq:Pv}
P_v^{\triangleleft} :=\{{\bf x}\in (\R_{\geq 0})^d\,:\, |H|\cdot
\sum_ix_ia_{i,v} < l_v\} \ \ \mbox{and} \ \
P^\triangleleft :=\bigcup_{v=1}^sP_v^\triangleleft.\end{equation}  The closure $P_v$ of $P_v^\triangleleft$
 is a dilated
convex (simplicial)  polytope depending on the one-dimensional parameter $l_v$.
Moreover,  $P^\triangleleft$
is described  via the partial ordering  of $L\otimes\Q$ as the set
$\sum_ix_ia_i\not\geq l/|H|$. Since $L'\subset L/|H|$, we can
restrict ourself to the lattice $L'$ (preserving all the general
results of section \ref{ss:PPET}). Hence for any $l'\in L'$ we
set
\begin{equation}\label{eq:PL}
P^{(l'),\triangleleft }:=\{{\bf x}\in (\R_{\geq 0})^d\,:\, \sum_ix_ia_i\not\geq l'\}, \ \ \
P^{(l')}=\mbox{closure of }(
P^{(l'),\triangleleft }).\end{equation}
The combinatorial type of $P^{(l')}$ might vary with $l'$. Nevertheless, by definition, the facets will be
grouped for all different combinatorial types  by the same principle: we consider the
coordinate facets $F_i:=P^{(l')}\cap \{x_i=0\}$, $1\leq i\leq d$, and
we denote by $\calT$ the collection of all other facets. Hence $P^{(l'),\triangleleft }=
P^{(l')}\setminus  \cup_{F^{(l')}\in \calT}F^{(l')}$. The construction is
motivated by the summation from (\ref{eq:SUM})
(although in the general statements the choice of $\calT$ is irrelevant).

 Then \ref{th:JEMS} and \ref{PC} lead to the next counting function defined in the
 group ring $\Z[H]$ of $H$:
\begin{equation}\label{eq:calP}
\calL^e({\bf A},\calT,l'):=\sum_{h\in H} \calL_h({\bf A},\calT,l')\cdot [h]
:=\sum 1\cdot [l'']\in \Z[H],
\end{equation}
where the last sum runs over $l''\in \big(P^{(l')}\setminus \cup_{F^{(l')}\in \calT}
F^{(l')}\big)\cap L'=P^{(l'),\triangleleft}\cap L'$.

The corresponding  nonequivariant counting function, corresponding to $G=0$ is denoted by
$$ \calL_{ne}({\bf A},\calT,l'):=\sum_{h\in H} \calL_h({\bf A},\calT,l')\in \Z.$$

Similarly, we set $\calL^e({\bf A},\calF\setminus \calT,l')$ too. For both of them
Theorem \ref{th:REC} applies.

By the very construction, we have the following identity. Consider the
 equivariant Taylor expansion at the origin of the function determined by the {\it denominator of $f$},  namely
\begin{equation}\label{eq:Taylorden}
\widetilde{f}^e(\bt)=\frac{1}{\prod_{i=1}^d (1-[a_i]\bt^{a_i})}=
\sum_{l''}\widetilde{p}_{l''}\bt^{l''}\cdot [l'']\in \Z[[\bt^{1/|H|}]]\, [H].
\end{equation}
Note that since all the $\{E_v\}$--coefficients of each $a_i$ are strict positive, for any
$l'\in L'$ the   set $\{l''\,:\, \widetilde{p}_{l''}\not=0, l''\not\geq l'\}$ is finite. Then, by the above construction,
\begin{equation}\label{eq:PCDEFaden}
\sum_{l''\not\geq l'} \, \widetilde{p}_{l''}\cdot [l'']=\calL^e({\bf A},\calT,l').
\end{equation}

\bekezdes\labelpar{bek:combtypes}{\bf Combinatorial types,
chambers.}  Next, we wish to make precise the {\it
combinatorial stability} condition. The result of Sturmfels
\cite{Str}, Brion--Vergne \cite{BV}, Clauss--Loechner \cite{CL} and
Szenes--Vergne \cite{SZV} implies that $\calL^e$ from (\ref{eq:PCDEFaden})
(that is, each $\calL_h$)
is a {\it piecewise
quasipolynomial} on $L'$: the parameter space $L\otimes \R$
decomposes into several chambers, the restriction of $\calL^e$ on
each chamber is a quasipolynomial, and $\calL^e$ is continuous. The chambers
are described as follows.

Notice that the combinatorial type of $P^{(l')}$ in (\ref{eq:PL})
vary in the same way as the closure of its {\it convex} complement in $\R_{\geq
0}^d$, namely
\begin{equation}\label{eq:PCOMP}
\{ {\bf x}\in (\R_{\geq 0})^d\,:\, \sum_ix_ia_i\geq l'\},
\end{equation}
since both are  determined by their common boundary $\calT$. The
inequalities  of (\ref{eq:PCOMP}) can be viewed as a {\it vector
partition}  $\sum_ix_ia_i+ \sum_v y_v(-E_v)=l'$, with $x_i\geq 0$
and $y_v\geq 0$.  Hence, according to the above references, we
have the following chamber decomposition of $L\otimes \R$.

Let ${\bf M}$  be the matrix with column vectors $\{a_i\}_{i=1}^d$
and $\{-E_v\}_{v=1}^s$. A subset $\sigma$ of indices of columns is
called {\it basis} if the corresponding columns form a basis of
$L\otimes \R$; in this case we write   $Cone({\bf M}_{\sigma})$
for the positive closed cone generated by them.  Then the chamber
decomposition is the polyhedral subdivision of $L\otimes \R$
provided by the common refinement of the cones $Cone({\bf
M}_{\sigma})$, where $\sigma$ runs all over the basis. {\it A chamber
is a closed cone of the subdivision whose interior is non--empty.}
Usually we denote them by $\calC$, let their
index set (collection) be $\mathfrak{C}$.

We will need the associated {\it disjoint} decomposition of $L\otimes
\R$ with relative open cones as well. A typical element  of this disjoint decomposition
is the {\it
relative interior of an intersection of type $\cap_{\calC\in
\mathfrak{C}'}\calC$}, where $\mathfrak{C}'$ runs over the subsets
of $\mathfrak{C}$. For these cones we use the notation
$\calC_{op}$.

Each chamber $\calC$ determines an open cone, namely its interior.
And, conversely, each top dimensional open cone determines a
chamber $\calC$, namely its closure.

 The next theorem is the
direct consequence of \cite[4.4]{BV}, \cite[0.2]{SZV} and
(\ref{th:REC}) using the additivity of the Ehrhart quasipolynomial
on the suitable convex parts of $P^{(l')}$. (We state it  for our
specific facet--collection $\calT$, the case which will be used
later, but it is true for any other facet--decomposition of the
boundary whenever  $\cup_{F^{(l')}\in \calT}F^{(l')}$ is
homeomorphic to a $(d-1)$--manifold.)
\begin{theorem}\labelpar{th:PQP}  (a) For each relative open cone  $\calC_{op}$ of $L\otimes \R$,
 $P^{(l')}$ is combinatorially stable, that is, the polytopes
  $\{P^{(l')}\}_{l'\in \calC_{op}}$ have the same combinatorial type.
 Therefore, for any fixed $h\in H$, the restrictions
 $\calL^{\calC_{op}}_h({\bf A},\calT)$ and  $\calL^{\calC_{op}}_h({\bf A},\calF\setminus \calT)$
to $\calC_{op}$  of
  $\calL_h({\bf A},\calT)$ and  $\calL_h({\bf A},\calF\setminus \calT)$
 respectively  are quasipolynomials.

(b) These quasipolynomials have a continuous extension to the
closure of $\calC_{op}$. Namely, if $\calC_{op}'$ is in the
closure of $\calC_{op}$, then
 $\calL^{\calC_{op}'}_h({\bf A},\calT)$ is the restriction to
 $\calC_{op}'$ of the (abstract) quasipolynomial  $\calL^{\calC_{op}}_h({\bf
 A},\calT)$. (Similarly for $\calL^{\calC_{op}}_h({\bf A},\calF\setminus
 \calT)$.)

In particular, for any chamber $\calC$ one has a well defined
quasipolynomial  $\calL^{\calC}_h({\bf A},\calT)$, defined as
$\calL^{\calC_{op}}_h({\bf A},\calT)$, where $\calC_{op}$ is the
interior of $\calC$, which equals $\calL_h({\bf A},\calT)$ for
all points of $\calC$.

This also shows that for any two chambers $\calC_1$ and $\calC_2$
one has the continuity property
\begin{equation}\label{eq:CONTINa}
\calL_h^{{\calC_1}}({\bf A},\calT)|_{\calC_1\cap \calC_2}=
\calL_h^{{\calC_2}}({\bf A},\calT)|_{\calC_1\cap
\calC_2}.\end{equation}

(c) $\calL^{\calC}_h({\bf A},\calT)$ and $\calL^{\calC}_{-h}({\bf A},\calF\setminus \calT)$,
as abstract quasipolynomials associated with a fixed chamber $\calC$,   satisfy the reciprocity
 $$\calL^{\calC}_h({\bf A},\calT,l')=(-1)^d\cdot\calL^{\calC}_{-h}({\bf A},\calF\setminus \calT,-l').$$

\end{theorem}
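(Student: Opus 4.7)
My plan is to deduce Theorem \ref{th:PQP} from Theorem \ref{th:REC} together with the chamber-theoretic piecewise quasipolynomiality of vector partition functions of Brion--Vergne and Szenes--Vergne, by reducing to the convex simplicial pieces of the non-convex union $P^{(l')}=\bigcup_v P_v^{(l_v)}$ via inclusion--exclusion. For part (a), I would first use $P^{(l'),\triangleleft}=\bigcup_v P_v^{(l_v),\triangleleft}$ from (\ref{eq:Pv}) to expand
\begin{equation*}
\calL_h(\mathbf{A},\calT,l')=\sum_{\emptyset\neq S\subseteq\{1,\ldots,s\}}(-1)^{|S|+1}\,\#\bigl((\cap_{v\in S}P_v^{(l_v),\triangleleft})\cap\rho^{-1}(h)\bigr).
\end{equation*}
Each intersection $\cap_{v\in S}P_v^{(l_v)}$ is a convex simplicial polytope whose defining system uses a submatrix of $\mathbf{M}$; by \cite[Thm.\,4.4]{BV} and \cite[Thm.\,0.2]{SZV} its combinatorial type is constant on each relative open cone $\calC_{op}$ of \ref{bek:combtypes}. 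I would then apply Theorem \ref{th:REC}(a) to each term to obtain a quasipolynomial on $\calC_{op}$, so the signed sum $\calL^{\calC_{op}}_h(\mathbf{A},\calT)$ is quasipolynomial; the argument for $\calL^{\calC_{op}}_h(\mathbf{A},\calF\setminus\calT)$ is verbatim the same.

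For part (b), the key observation is that the actual counting function is defined at every $l'\in L'$, in particular on the lattice points of any face $\calC_{op}'\subset\overline{\calC_{op}}$. Since $\calC_{op}\cap L'$ contains a full-rank sublattice, the quasipolynomial $\calL^{\calC_{op}}_h$ extends uniquely to an abstract quasipolynomial on $\overline{\calC_{op}}$, which on $\calC_{op}'\cap L'$ agrees with the actual counting function; the intrinsic quasipolynomial $\calL^{\calC_{op}'}_h$ also agrees with the counting function on that dense set, so quasipolynomial uniqueness forces the two to coincide on all of $\calC_{op}'$. Specialization of $\calC_{op}$ to the interior of a chamber yields the well-defined $\calL^{\calC}_h$, and applying this to a face shared by two adjacent chambers gives the continuity identity (\ref{eq:CONTINa}).

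For part (c), I would fix a chamber $\calC$ and apply Theorem \ref{th:REC}(b) term-by-term to the expansion of (a). Each $\cap_{v\in S}P_v^{(l_v)}$ has constant combinatorial type on $\calC$ and is $d$-dimensional, so its half-open count obeys the Ehrhart--MacDonald--Stanley reciprocity with the complementary half-open structure (coordinate facets opened, cap facets closed), evaluated at $-l'$. Writing $P^{(l')}\setminus\bigcup_i F_i=\bigcup_v\{x>0:\sum_i x_ia_{i,v}\leq l_v\}$ and using the analogous inclusion--exclusion for this union, the signed sum of the term-level reciprocities assembles into the global identity
\begin{equation*}
\calL^{\calC}_h(\mathbf{A},\calT,l')=(-1)^d\cdot\calL^{\calC}_{-h}(\mathbf{A},\calF\setminus\calT,-l').
\end{equation*}

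The main technical obstacle will be verifying that a single chamber $\calC$ of \ref{bek:combtypes} simultaneously controls the combinatorial type of every subsystem intersection $\cap_{v\in S}P_v^{(l_v)}$, and that the coordinate-vs-cap facet partition behaves coherently under the inclusion--exclusion so that the $\calT$-pieces pair with the $(\calF\setminus\calT)$-pieces consistently across all $S$. Both reduce to the observation that the entire hyperplane arrangement comes from a single linear system with matrix $\mathbf{M}$, whose subfamilies inherit the global chamber structure.
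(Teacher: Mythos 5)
Your inclusion--exclusion reduction for part (a), and the closing observation that all subsystem intersections $\cap_{v\in S}P_v$ are controlled by the single chamber fan cut out by the full matrix ${\bf M}$, unpack exactly the ``additivity on the suitable convex parts of $P^{(l')}$'' that the paper invokes in its one-line attribution; likewise part (c) is fine, though you could apply Theorem~\ref{th:REC}(b) directly to the non-convex union once (a) has fixed the combinatorial type of $P^{(l')}$ on $\calC$, since Theorem~\ref{th:REC}(b) is already stated for the union $P^{(\bl)}=\cup_v P_v^{(\bl_v)}$ and does not require re-expansion.

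The genuine gap is in part (b). You assert that the unique quasipolynomial extension of $\calL^{\calC_{op}}_h$ to $\overline{\calC_{op}}$ ``agrees with the actual counting function'' on $\calC_{op}'\cap L'$, and then invoke uniqueness of quasipolynomials. But that agreement \emph{is} the continuity of the piecewise quasipolynomial across chamber walls --- it is the nontrivial content of (b), not a byproduct of uniqueness. Uniqueness only pins down the formal extension of the abstract quasipolynomial from the interior; it gives no reason for that extension to coincide with the actual count on a lower-dimensional face, and a priori a piecewise quasipolynomial can jump across walls. What saves the day is the continuity theorem for vector partition functions, i.e.\ precisely \cite[Thm.\,4.4]{BV} and \cite[Thm.\,0.2]{SZV} (see also Remark~\ref{re:Szenes}, which records from \cite{SZV} an even stronger coincidence on neighboring strips). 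To fix your argument, apply that cited continuity term-by-term to the convex pieces in your inclusion--exclusion expansion and then sum, instead of attempting to derive continuity from quasipolynomial uniqueness.
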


\noindent
We have the following consequences regarding the counting function $l'\mapsto Q_h(l')$ of $f^e(\bt)$
defined in (\ref{eq:PCDEFa}):

\begin{corollary}\labelpar{cor:Taylor}
(a)  $Q_h$ is a piecewise quasipolynomial.
Indeed, for any $h\in H$ and $l'\in L'$
\begin{equation}\label{eq:IND}
Q_h(l')=\sum_k\iota_k\cdot\calL_{h-[b_k]}({\bf A},\calT,l'-b_k).
\end{equation}   
In particular, the right hand side of (\ref{eq:IND}) is independent
of the representation of $f$ as in (\ref{eq:func}) (that is, of the
choice of $\{b_k,\,a_i\}_{k,i}$), it depends only on the rational function $f$.

(b) Fix a chamber $\calC$  of $L\otimes \R$, cf. \ref{th:PQP}, and for any $h\in H$ define
the quasipolynomial
\begin{equation}\label{eq:qbar}
\overline{Q}_h^\calC(l'):=
\sum_k\iota_k\cdot\calL^\calC_{h-[b_k]}({\bf A},\calT,l'-b_k).
\end{equation}
Then  the restriction of
$Q_h(l')$ to $\cap_k(b_k+\calC)$ is a quasipolynomial, namely
\begin{equation}\label{ex:qbar2}
Q_h(l')=\overline{Q}_h^\calC(l') \ \mbox{ on \
$\bigcap_k(b_k+\calC)$.}\end{equation} Moreover,
there exists $l'_*\in\calC$ such that $l'_*+\calC\subset \cap_k(b_k+\calC)$.

 [Warning:
$\calL^\calC_{h-[b_k]}({\bf A},\calT,l'-b_k)\not=\calL_{h-[b_k]}({\bf A},\calT,l'-b_k)$
unless $l'-[b_k]\in \calC$.]

(c) For any fixed $h\in H$,
the quasipolynomial  $\overline{Q}_h^\calC(l')$ satisfies  the following property:
for any $l'\in L'$ with $[l']=h$, and any $q\in\square$ (the semi-open unit cube), one has
\begin{equation}\label{eq:cccc}\overline{Q}_h^\calC(l'-q)=\overline{Q}_h^\calC(l').\end{equation}
In particular, by taking $l'=q=r_h$:
\begin{equation}\label{eq:cccc2}\overline{Q}_h^\calC(r_h)=\overline{Q}_h^\calC(0).\end{equation}
\end{corollary}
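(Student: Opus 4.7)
My plan is to establish the three parts of Corollary~\ref{cor:Taylor} in sequence, using the equivariant Ehrhart machinery of Theorem~\ref{th:PQP} together with direct coefficient comparisons and a quasipolynomial extension argument at the end.

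For part (a), I would write the equivariant rational function as the product
\[
f^e(\bt)\;=\;\widetilde{f}^e(\bt)\cdot\sum_{k=1}^r \iota_k\,\bt^{b_k}[b_k],
\]
where $\widetilde{f}^e$ is the equivariant Taylor expansion of the denominator from (\ref{eq:Taylorden}). Comparing the coefficient of $\bt^{l'}[h]$ on both sides yields $p_{l',h}=\sum_k \iota_k\,\widetilde{p}_{l'-b_k,\,h-[b_k]}$. Summing over $l''\not\geq l'$, shifting the index by $b_k$ and applying (\ref{eq:PCDEFaden}) to each term produces formula (\ref{eq:IND}). The claim that the right-hand side is independent of the presentation of $f$ is then automatic, since $Q_h(l')$ is defined intrinsically from the Taylor coefficients of $f$.

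For part (b), the identity $Q_h(l')=\overline{Q}_h^\calC(l')$ on $\bigcap_k(b_k+\calC)$ is immediate from Theorem~\ref{th:PQP}: whenever $l'-b_k\in\calC$ for every $k$, each summand $\calL_{h-[b_k]}({\bf A},\calT,l'-b_k)$ in (\ref{eq:IND}) coincides with the chamber quasipolynomial $\calL^\calC_{h-[b_k]}({\bf A},\calT,l'-b_k)$ of (\ref{eq:qbar}). For the existence of $l'_*$, I would use that $\calC$ is a closed convex cone with non-empty interior: choose $l'_*$ in the interior of $\calC$ with coordinates large enough that $l'_*-b_k\in\calC$ for every $k$ (possible since $\{b_k\}_k$ is finite). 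Then $\calC+\calC\subseteq\calC$ forces $l'_*+\calC\subseteq b_k+\calC$ for each $k$, hence $l'_*+\calC\subseteq\bigcap_k(b_k+\calC)$.

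For part (c), I would argue in three steps. First, a direct counting argument: if $[l']=[l'']=h$, then $l''-l'\in L$ has integer coordinates, while $q\in\square$ has coordinates in $[0,1)$; hence $(l''-l'+q)_v\geq 0$ iff $(l''-l')_v\geq 0$, so the sets $\{l''\in L':[l'']=h,\,l''\not\geq l'\}$ and $\{l''\in L':[l'']=h,\,l''\not\geq l'-q\}$ coincide, yielding $Q_h(l')=Q_h(l'-q)$. Second, by choosing $l'$ deep enough in $\calC$ so that both $l'$ and $l'-q$ lie in $l'_*+\calC\subseteq\bigcap_k(b_k+\calC)$ (possible since $\square$ is bounded and $\calC$ has non-empty interior), part (b) upgrades this equality to $\overline{Q}_h^\calC(l')=\overline{Q}_h^\calC(l'-q)$ on that region. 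Third, to propagate the identity to all $l'\in L'$ with $[l']=h$, I would note that both sides are restrictions of quasipolynomials in $l'$, so their difference vanishes on a full-dimensional sub-cone of the coset $\{l'\in L':[l']=h\}$; passing to a period sublattice on which $\overline{Q}_h^\calC$ becomes polynomial, and using that a polynomial vanishing on a full-dimensional subset of a lattice vanishes identically, the equality propagates to the entire coset. The special case (\ref{eq:cccc2}) is then immediate by taking $l'=q=r_h$. The main obstacle is this final propagation step, which, while conceptually standard, must be executed carefully using the period structure of $\overline{Q}_h^\calC$; all the Ehrhart-theoretic substance is already encoded in Theorem~\ref{th:PQP} and in part (b).
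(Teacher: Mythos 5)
Your proof is correct and follows essentially the same route as the paper: the convolution identity $p_{l'}=\sum_k\iota_k\widetilde{p}_{l'-b_k}$ together with the shift $l''\not\geq l'\Leftrightarrow l''-b_k\not\geq l'-b_k$ for (a), the translated-cone containment for (b), and the observation that for $y\in L$ and $q\in\square$ one has $y\geq 0\Leftrightarrow y\geq -q$, followed by quasipolynomial rigidity, for (c). The only cosmetic difference is that the paper phrases (a) and (c) directly via the polytope $P^{(l'),\triangleleft}\cap\rho^{-1}(h)$ and (\ref{eq:PCDEFaden}), whereas you argue at the level of Taylor coefficients of $f^e$ and $\widetilde f^e$, which amounts to the same thing.
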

\begin{proof}
For (a) use (\ref{eq:PL}) and the fact that $b_k+\sum x_ia_i\not\geq l'$ if and only
if $\sum x_ia_i\not\geq l'-b_k$. Since the coefficients of the
Taylor expansion depend only on $f$, the second sentence  follows too.

For (b) use part (a)  and
the fact that $\calC\cap(\cap_k(b_k+\calC))$ contains a set of type $l_*'+\calC$.

(c) Consider those values $l'$ in some $l'_*+\calC$ for which  all elements of type
$l'-b_k$ and $l'-q-b_k$ are in $\calC$. For these values $l'$, (\ref{eq:cccc})
follows from the identity
 $P^{(l'),\triangleleft}\cap\rho^{-1}(h)= P^{(l'-q),\triangleleft}\cap\rho^{-1}(h)$
whenever $[l']=h$.
This is true since  for any $l''$ with
$[l'']=[l']$, $l''\geq l'$ is equivalent with $l''\geq l'-q$. Indeed, taking $y=l''-l'$,
this reads as follows: for any $y\in L$, $y\geq 0$ if and only if $y\geq -q$.

Now, if two quasipolynomials agree on $l'_0+\calC$ then they are equal.
\end{proof}

\begin{remark}\labelpar{re:Szenes} Thanks to  \cite[Theorem 0.2]{SZV},
the continuity property \ref{eq:CONTINa} has the following
extension (coincidence of the quasipolynomials on neighboring strips).
Set $\Box({\bf A}):=\sum_i [0,1)a_i$. Then for any two chambers $\calC_1$ and $\calC_2$,
and  $S:=(-\Box({\bf A})+\calC_1) \cap (-\Box({\bf
A})+\calC_2)$
\begin{equation}\label{eq:CONTINb}
\calL_h^{{\calC_1}}({\bf A},\calT)|_{S}= \calL_h^{{\calC_2}}({\bf
A},\calT)|_{S}.\end{equation}
\end{remark}

\bekezdes\labelpar{ss:d0} {\bf The $d=0$ case.} All the above
properties can be extended for $d=0$ as well. Although the
polytope constructed in \ref{eq:PL} does not exist,  we can
look at the polynomial  $f(\bt)=\sum_k \iota_k
\bt^{b_k}$ itself. Then using notation of (\ref{eq:PCDEFa}) we set
$$\sum _{h\in H}\,Q_h(l')[h]=
\sum_{l''\not\geq l'} \, p_{l''}\cdot [l'']=\sum_{\{k \,:\, b_k\not\geq l'\}}\iota_k [b_k].$$
Moreover, we have  the chamber decomposition of $L\otimes\R$ defined by
$\{-E_v\}_{v=1}^s$ via the same principle as above. This means two chambers:
 $\calC_0:=\R_{\geq 0}\langle -E_v
\rangle$ and $\calC_1$,  the closure of the  complement of $\calC_0$ in $\R^s$.
Then $Q_h(l')=\sum_{\{k \,:\, [b_k]=h\}}\iota_k$ on
$\cap_k(b_k+\calC_1)$ and $0$ on $\cap_k(b_k+\calC_0)$.

\subsection{The definition of the multivariable equivariant periodic constant of a rational function}\

\vspace{2mm}

\noindent We consider the situation of \ref{bek:LL'} and \ref{bek:pol}(a).
For each $h\in H$ define  $r_h\in L'$ as in \ref{ss:11}.
\begin{definition}\labelpar{def:pc}
Let  $\calK\subset L'\otimes \R$ be a
closed real cone whose affine closure aff$(\calK)$ has positive dimension.
For any $h\in H$ we assume
that there exist

$\bullet$ $l'_*\in \calK$

$\bullet$ a sublattice $\widetilde{L}\subset L$ of finite index, and

$\bullet$ a quasipolynomial $l'\mapsto \widetilde{Q}_h(l')$,
defined on  $\widetilde{L}\cap{\mathrm{aff}}(\calK)$
such that
\begin{equation}\label{eq:PCDEFx}
Q_h(l')=\widetilde{Q}_h(l') \ \ \mbox{
for any $\widetilde{L}\cap(l'_*+\calK)$.}
\end{equation}
Then we define the {\it equivariant periodic constant} of $f$ associated with   $\calK$  by
\begin{equation}\label{eq:PCDEF}\mathrm{pc}^{e,\calK}(f)=\sum_{h\in H}\,
\mathrm{pc}^{\calK}_h(f)\cdot [h]:= \sum_{h\in H}\,
\widetilde{Q}_h(0)\cdot [h]\in \Z[H],\end{equation} and  we say
that {\it $f$ admits a periodic constant in $\calK$}. (Sometimes
we will use the same notation for the real  cone $\calK$ and for
its lattice points  $\calK\cap L'$ in $L'$.)
\end{definition}
\begin{remark}
The above definition is independent of the choice of the
sublattice $\widetilde{L}$: it can be replaced by any
sublattice  of finite index. The advantage of such sublattices is
that convenient restrictions of $Q_h$ might  have nicer forms which are easier to
compute. The choice of  $\widetilde{L}$ corresponds to
the choice of $p$ in \ref{PC}, and it is responsible for the name
`periodic' in the name of $\mathrm{pc}^{e,\calK}(f)$.
\end{remark}
\begin{proposition}\labelpar{prop:pc}
(a)  Consider the chamber decomposition of $L\otimes \R$
given by the denominator $\prod_i(1-\bt^{a_i})$ of $f$
as in Theorem~\ref{th:PQP}.
Then $f$ admits  a periodic constant in each chamber $\calC$ and
\begin{equation}\label{eq:PCC}
{\mathrm{pc}}_h^\calC(f)\, = \,
\overline{Q}_h^\calC(r_h)=\overline{Q}_h^\calC(0).
\end{equation}
(b) If two functions $f_1$ and $f_2$ admit periodic constant in  some cone  $\calK$,
then the same is true for  $\alpha_1f_1+\alpha_2f_2$  and
$$\mathrm{pc}^\calK (\alpha_1f_1+\alpha_2f_2)=\alpha_1
\mathrm{pc}^\calK (f_1)+\alpha_2\mathrm{pc}^\calK (f_2) \ \ \ \ \
(\alpha_1,\,\alpha_2\in \C).$$ (c) If $f$ admits periodic
constants in two (top dimensional) cones $\calK_1$ and $\calK_2$,
and the interior $int(\calK_1\cap\calK_2)$ of the intersection
$\calK_1\cap\calK_2$ is non-empty, then $\mathrm{pc}^{\calK_1}(f)=
\mathrm{pc}^{\calK_2}(f)$.

In particular, if $\{C_i\}_{i=1,2}$ are two chambers as in (a), and $f$ admits a periodic constant
in $\calK$,  and $int(\calC_i\cap\calK)\not=\emptyset$ ($i=1,2$), then
$\mathrm{pc}^{\calC_1}(f)= \mathrm{pc}^{\calC_2}(f)$.
\end{proposition}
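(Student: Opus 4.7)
For (a), the plan is to simply apply Corollary~\ref{cor:Taylor}(b), which already produces both the required quasipolynomial $\overline{Q}_h^\calC$ on the lattice $L$ and a point $l'_*\in\calC$ with $Q_h(l')=\overline{Q}_h^\calC(l')$ on $(l'_*+\calC)\cap L$. Feeding the triple $(\widetilde{L},\widetilde{Q}_h,l'_*):=(L,\overline{Q}_h^\calC,l'_*)$ into Definition~\ref{def:pc} immediately establishes the existence of the periodic constant and identifies it with $\overline{Q}_h^\calC(0)$; the equality with $\overline{Q}_h^\calC(r_h)$ is then just the specialization (\ref{eq:cccc2}).

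For (b) I would exploit that the operation $f\mapsto Q_h$ from (\ref{eq:PCDEFa}) is $\C$-linear in $f$. Given extensions $(\widetilde{L}_i,l'_{*,i},\widetilde{Q}_{h,i})$ for $f_1,f_2$ on $\calK$, set $\widetilde{L}:=\widetilde{L}_1\cap\widetilde{L}_2$, still of finite index in $L$. Using that $\calK$ is a (convex) cone containing each $l'_{*,i}$, the point $l'_*:=l'_{*,1}+l'_{*,2}$ satisfies $l'_*+\calK\subseteq(l'_{*,i}+\calK)$ for $i=1,2$, so $\alpha_1\widetilde{Q}_{h,1}+\alpha_2\widetilde{Q}_{h,2}$ is a quasipolynomial on $\widetilde{L}$ extending the counting function of $\alpha_1f_1+\alpha_2f_2$ on $\widetilde{L}\cap(l'_*+\calK)$. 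Evaluating at $0$ yields linearity of $\mathrm{pc}^{e,\calK}$.

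The core of the proposition is (c). With data $(\widetilde{L}_i,l'_{*,i},\widetilde{Q}_{h,i})$ for each $\calK_i$, set $\widetilde{L}:=\widetilde{L}_1\cap\widetilde{L}_2$. Since both cones are top-dimensional and $\mathrm{int}(\calK_1\cap\calK_2)\neq\emptyset$, the overlap region $D:=(l'_{*,1}+\calK_1)\cap(l'_{*,2}+\calK_2)$ has non-empty interior; on the lattice points $D\cap\widetilde{L}$ both quasipolynomials coincide with $Q_h$ and hence with each other. The main obstacle, and the only step that really requires work, is upgrading this coincidence on a full-dimensional region to an equality $\widetilde{Q}_{h,1}\equiv\widetilde{Q}_{h,2}$ as quasipolynomials on all of $\widetilde{L}$. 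I would argue this by choosing a common period sublattice $M\widetilde{L}\subseteq\widetilde{L}$ on whose cosets both $\widetilde{Q}_{h,i}$ are polynomial, then observing that $D\cap\widetilde{L}$ meets every coset of $M\widetilde{L}$ in a Zariski-dense subset of a full-dimensional affine subspace, so the coset-wise polynomials must coincide. Evaluating at $0$ yields $\mathrm{pc}^{\calK_1}(f)=\mathrm{pc}^{\calK_2}(f)$, and the ``In particular'' clause follows by pairing (a) with this uniqueness applied to $(\calC_i,\calK)$ for $i=1,2$.
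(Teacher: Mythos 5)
Your proof is correct and, in substance, matches the paper's. Parts (a) and (b) correspond exactly to what the paper records (``For (a) use Corollary~\ref{cor:Taylor}; (b) is clear''), with you supplying the routine bookkeeping details. The one place where the arguments diverge is (c): the paper first reduces to the case $\calK_2\subset\calK_1$ by passing to $\calK_1\cap\calK_2$, and then simply notes that the quasipolynomial $\widetilde{Q}_{h,1}$ certifying the periodic constant for $\calK_1$ restricts to a certifying quasipolynomial for $\calK_2$ (since $(l'_{*,1}+\calK_1)$ contains a translate $l'_2+\calK_2$), so both periodic constants are read off from the same function. You instead keep both cones and establish the identity $\widetilde{Q}_{h,1}\equiv\widetilde{Q}_{h,2}$ on a common period sublattice via the Zariski-density argument. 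Both routes are sound; they ultimately rest on the same uniqueness of quasipolynomials that makes Definition~\ref{def:pc} well-posed (a point the paper leaves implicit), with the paper's reduction packaging the issue so that a single quasipolynomial is exhibited for both cones, while yours makes the uniqueness explicit.
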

\begin{proof}
For (a) use Corollary \ref{cor:Taylor};   (b) is clear. For (c) we
can assume that $\calK_2\subset \calK_1$ (by considering $\calK_i$
and $\calK_1\cap\calK_2$). Then if $Q_h$ is quasipolynomial on
$l'_1+\calK_1$  (with $l'_1\in\calK_1$), then $(l'_1+\calK_2)\cap
\calK_2$ contains a set of type $l_2'+\calK_2$ with
$l'_2\in\calK_2$, on which one can take the restriction of the
previous quasipolynomial.
\end{proof}

\begin{remark}\labelpar{rem:fh}
  Note that in the rational presentation of $f$ we
might assume that  $a_i \in L$ for all $i$. Indeed, take  $o_i\in
\Z_{>0}$ such that $o_ia_i\in L$, and amplify the fraction by
 $\prod_i(1-\bt^{o_ia_i})/(1-\bt^{a_i})$. Therefore, for each $h$ we can
write $f_h(\bt)$ in the form
$$f_h(\bt)=\bt^{r_h}\sum_k \iota_k \cdot\frac{\bt^{\overline{b}_k}}{\prod_i(1-\bt^{a_i})},$$
where $a_i,\ \overline{b}_k \in L$, hence $f_h(\bt)/\bt^{r_h}\in \Z[[\bt]][\bt^{-1}]$.
Then if we consider the
nonequivariant periodic constant ${\rm pc}^\calC$ of $f_h(\bt)/\bt^{r_h}$,
\ref{eq:PCDEFa}, \ref{ex:qbar2} and \ref{eq:PCC} imply that
${\rm pc}_h^\calC(f(\bt))={\rm pc}^\calC(f_h(\bt)/\bt^{r_h})$ for all chambers
$\calC$ associated with $\{a_i\}_i$.
\end{remark}

 \begin{example}\labelpar{ex:1} \
Assume that $L=L'=\Z$ and $\calK=\R_{\geq 0}$, and consider
$S(t)$ as in \ref{PC}. If $S(t)$ admits  a periodic constant in
$\calK$, then the definition of $\mathrm{pc}(S)$ from
\ref{def:pc} is compatible with the statements  from \ref{PC}.
\end{example}

\begin{example}\labelpar{ex:2}
(a)  (The $d=0$ case) \ Assume that
$f(\bt)=\sum_{k=1}^r\iota_k\bt^{b_k}$. Then, using \ref{ss:d0}
(and its notation), $\mathrm{pc}^{e,\calC_0}(f)=0$ and
$\mathrm{pc}^{e,\calC_1}(f)=\sum_{k=1}^r\iota_k[b_k]\in\Z[H]$.

(b) Assume that the rank is $s=2$ and $f(\bt)=\bt^b/(1-\bt^a)$,
with both the entries $(a_1,a_2)$ of $a$ positive.
We assume that $a\in L$ while $b\in L'$. Again, for $h\not=[b]$ the counting function,
hence its periodic constant too, is zero. Assume $h=[b]$, and write
$b=(b_1,b_2)$. Then the denominator provides three chambers:
$\calC_0:=\Z_{\geq  0}\langle -E_1,-E_2\rangle$,
$\calC_1:=\Z_{\geq  0}\langle a,-E_2\rangle$, $\calC_2:=\Z_{\geq
0}\langle a,-E_1\rangle$. Then the three quasipolynomials for
$1/(1-\bt^a)$ are $\calL^{\calC_0}_h=0$, \
$\calL^{\calC_i}_h(n_1,n_2)=\lceil n_i/ a_i\rceil$; hence
 ${\mathrm{pc}}^{\calC_0}_h(f)=0$, \
${\mathrm{pc}}^{\calC_i}_h(f)=\lceil -b_i/a_i\rceil$  ($i=1,2$).
In particular, ${\mathrm{pc}}^{\calC}_h(f)$, in general, depends on
the choice of $\calC$.

(c) Assume that $L=L'$ and
$f(t)=\frac{t_1^{b_1}t_2^{b_2}}{(1-t_1t_2)(1-t_1^2t_2)}$.  Then
the  chambers associated with the denominator are:
$\calC_0:=\R_{\geq 0}\langle -E_1,-E_2\rangle$, $\calC_2:=\R_{\geq
0}\langle -E_1,(1,1)\rangle$, $\calC:=\R_{\geq 0}\langle
(1,1),(2,1)\rangle$ and $\calC_1:=\R_{\geq 0}\langle
(2,1),-E_2\rangle$. Then, by a computation,
\begin{equation}
 \begin{array}[l]{ll}
    \calL^{\calC_0}=0; & \calL^{\calC_2}(l_1,l_2)=\frac{l_2^2}{2}+\frac{l_2}{2};\\
    \calL^{\calC}(l_1,l_2)=\frac{l_1^2}{2}+l_2^2+\frac{l_1}{2}-l_1
    l_2; &
    \calL^{\calC_1}(l_1,l_2)=\frac{l_1^2}{4}+\frac{l_1}{2}+\frac{1+(-1)^{l_1+1}}{8}.\\
 \end{array}
\end{equation}
Hence, by Proposition \ref{prop:pc} and (\ref{eq:qbar}), one has
$\mathrm{pc}^{\calC_*}(f)= \calL^{\calC_*}(-b_1,-b_2)$.
\end{example}

\begin{example}\labelpar{ex:NAM} {\bf Normal affine monoids.} \
Consider the following objects (cf.  \ref{bek:LL'}): a lattice $L$
with fixed bases $\{E_v\}_{v=1}^d$ (hence $s=d$) and with induced
partial ordering $\leq$, $L'\subset L\otimes \Q$ an overlattice
with finite abelian quotient $H:=L'/L$ and projection $\rho:L'\to
H$. Furthermore,  let  $\{a_i\}_{i=1}^d$ be linearly independent
vectors in $L'$ with all their $\{E_v\}$--coordinates positive.
Let $\calK$ be the positive real cone generated by the vectors
$\{a_i\}_i$, and consider the Hilbert series of $\calK$
$$f(\bt ):=\sum_{l'\in \calK\cap L'}\bt^{l'}.$$
Since $\calK$ depends only on the rays generated by the vectors
$a_i$, we can assume that $a_i\in L$ for all $i$.

Set  $\Box({\bf A})= \sum_{i=1}^d [0,1) a_i$ as above, and
consider the monoid $M:=\Z_{\geq 0}\langle a_i \rangle$ (cf.  e.g.
\cite[2.C]{BG}). Then the normal affine monoid
$\calK\cap L'$ is a module over  $M$ and if we set $B:=\Box({\bf
A})\cap L'$, \cite[Prop. 2.43]{BG} implies that
$$\calK\cap L'=\bigsqcup_{b\in B}b+M.$$
In particular, $f(\bt)$ equals $\sum_{b\in B}\bt^b/\prod_{i=1}^d (1-\bt^{a_i})$ and has the
form considered in \ref{ss:GENRF}.

If the rank $d$ is $\geq 3$ then $\calK$ usually is cut
in more chambers. Indeed, take e.g. $d=3$, $a_i=(1,1,1)+E_i$ for $i=1,2,3$. Then $\calK$ is cut in its
 barycentric subdivision.  Nevertheless,  if $d= 2$ then $\calK$ consists of  a unique chamber and $f$ admits a periodic
 constant in $\calK$. Indeed, one has:
\begin{lemma}\labelpar{lem:d2}
 If $d=2$ then ${\mathrm pc}^{\calK}_h(f)=0$ \,for all $h \in H$.
\end{lemma}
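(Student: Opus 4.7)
The plan is to evaluate the quasipolynomial $\overline{Q}^\calK_h$ at $l'=0$ by (i) using formula (\ref{eq:IND}) to split it over $b\in B$, (ii) applying the reciprocity law of Theorem~\ref{th:PQP}(c), and (iii) observing that each resulting term vanishes because no strictly positive lattice point can sit in the tiny polytope $P^{(b)}$ attached to $b\in\Box({\bf A})$. The special role of $d=2$ is twofold: the cone $\calK$ itself is a single chamber of the decomposition of $L\otimes\R$ produced by $\{a_i,-E_v\}$ (so every $b\in B\subset\Box({\bf A})\subset\calK$ lies in the closed chamber where $\overline{Q}^\calK_h$ coincides with the actual count), and the sign $(-1)^d=1$ in reciprocity makes all the signs trivial.

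First I would write $f(\bt)=\sum_{b\in B}\bt^b/\prod_{i=1}^2(1-\bt^{a_i})$ (assuming, as allowed, that $a_i\in L$, so $[\sum x_ia_i]=0$), and combine Corollary~\ref{cor:Taylor}(b) with Proposition~\ref{prop:pc}(a) to obtain
\begin{equation*}
\mathrm{pc}^\calK_h(f)=\overline{Q}^\calK_h(0)=\sum_{b\in B}\calL^\calK_{h-[b]}({\bf A},\calT,-b).
\end{equation*}

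Next I would apply Theorem~\ref{th:PQP}(c) with $d=2$, which yields
\begin{equation*}
\calL^\calK_{h-[b]}({\bf A},\calT,-b)=\calL^\calK_{[b]-h}({\bf A},\calF\setminus\calT,b).
\end{equation*}
Because $b\in\Box({\bf A})$ sits in the closed chamber $\calK$, Theorem~\ref{th:PQP}(b) ensures that this abstract quasipolynomial value equals the genuine lattice point count, namely the number of $\bx\in\Z^2$ with $x_1,x_2\geq 1$, lying in $P^{(b)}$, and with $[\sum x_ia_i]=[b]-h$.

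The final step, where the geometry of $d=2$ is exploited, is to show that this count is zero for every $b\in B$. Write $b=\sum_{i=1}^2 s_ia_i$ with $s_i\in[0,1)$. For any $\bx\in\Z_{\geq 1}^2$ and any coordinate $v$, strict positivity $a_{i,v}>0$ together with $x_i\geq 1>s_i$ gives
\begin{equation*}
\textstyle\sum_i x_ia_{i,v}\;\geq\;\sum_i a_{i,v}\;>\;\sum_i s_ia_{i,v}\;=\;b_v,
\end{equation*}
so $\sum_i x_ia_i\succ b$ strictly in \emph{every} coordinate; hence $\bx\notin P^{(b)}$. Consequently every summand above vanishes and $\mathrm{pc}^\calK_h(f)=0$.

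The only delicate point I foresee is making Theorem~\ref{th:PQP}(b) do its job for boundary elements of $B$ (for instance $b=0$ or $b$ lying on a ray $\R_{\geq 0}a_i$), where $P^{(b)}$ degenerates. However, the strict-inequality argument above still shows that the \emph{actual} count of interior lattice points is $0$ there, and the continuity of the chamber quasipolynomial on the closure $\overline{\calK}$ then forces $\calL^\calK_{[b]-h}({\bf A},\calF\setminus\calT,b)=0$ as well, so the argument extends uniformly to all $b\in B$.
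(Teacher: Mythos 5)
Your proof is correct and follows the same route as the paper's: you expand $\mathrm{pc}^\calK_h(f)=\overline Q^\calK_h(0)$ over $b\in B$, apply the reciprocity law $\calL^\calK_{h-[b]}(\calT,-b)=\calL^\calK_{[b]-h}(\calF\setminus\calT,b)$, and then observe that for every $\bx\in\Z_{\geq 1}^2$ the lattice point $\sum_ix_ia_i$ dominates $a_1+a_2$, which strictly dominates any $b\in\Box({\bf A})$, so the count vanishes. The only cosmetic difference is that the paper first isolates the relevant classes $h=[b_i]$ and argues term by term, whereas you keep the sum over $B$ together; the underlying inequality $a_1+a_2>b$ and the use of reciprocity are identical, and your extra care about boundary elements of $\Box({\bf A})$ is a harmless refinement of what the paper leaves implicit.
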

\begin{proof}
It is elementary to see that $\calK$ is one of the chambers (use the
construction from  \ref{bek:combtypes}). Take
$B=\{b_k\}_k$, and write $f=\sum_kf_k$, where
$f_k=\bt^{b_k}/(1-\bt^{a_1})(1-\bt^{a_2})$. The only relevant
classes $h\in H$ are given by $\{[b_i]: b_i\in B\}$, otherwise
already the Ehrhart quasipolynomials are zero (since $a_i\in L$). Fix such a class
$h=[b_i]$. Let $\calL^{\calK}_{h}(\calT)$ be the quasipolynomial
associated with the chamber $\calK$ and the denominator of $f$.
Then, by (\ref{eq:PCC}) and (\ref{eq:qbar}),
$\mathrm{pc}_h^{\calK}(f_k)=\calL^{\calK}_{[b_i-b_k]}(\calT,-b_k)$.
This, by the Reciprocity   Law \ref{th:PQP}(c) equals
$\calL^{\calK}_{[b_k-b_i]}(\calF\setminus \calT,b_k)$. Again,
since the denominator is a series in $L$, for $[b_k-b_i]\not=0$
the series is zero; so we may assume $[b_k-b_i]=0$.  But, since
$b_k\in \calK$, the value $\calL^{\calK}_0(\calF\setminus
\calT,b_k)$  of the quasipolynomial carries its geometric
meaning, it is the cardinality of the set  $\{m=n_1a_1+n_2a_2\,:\,
n_1>0,\, n_2>0,\, m\not>b_k\}$. But since for any such $m$ one has
$m\geq a_1+a_2>b_k$, contradicting $m\not>b_k$, this set is empty.
\end{proof}
\end{example}

\begin{example}\labelpar{ex:AM} {\bf General  affine monoids of rank $d=2$.} \
 Consider the situation of Example \ref{ex:NAM} with $d=2$, and let $N$ be a submonoid of
$\widehat{N}=\calK\cap L'$ of rank 2, and we also assume that
$\widehat{N}$ is the normalization of $N$. Set $$f(\bt
):=\sum_{l'\in N}\bt^{l'}.$$ Then $f(\bt)$ is again of type
(\ref{eq:func}). Indeed, by \cite[Prop. 2.35]{BG},
$\widehat{N}\setminus N$ is a union of finite family of sets of
type (I) $b\in \widehat{N}$, or (II) $b+\Z \ell a_i$, where $b\in
\widehat{N}$, $\ell\in\Z_{\geq 0}$, $i=1$ or 2.  Obviously, two sets
of type (II) with different $i$-values might have an intersection
point of type (I). In particular,
$$f(\bt)=\sum_{l'\in \widehat{N}}\bt^{l'}-\sum_i\frac{\bt^{b_{i,1}}}{1-\bt^{k_{i,1}a_1}}-
\sum_j\frac{\bt^{b_{j,2}}}{1-\bt^{k_{j,2}a_2}}+\sum_k(\pm
\bt^{b_k}).$$ Note that the periodic constant of the first sum is
zero by Lemma \ref{lem:d2}, and the others can easily be computed
(even with closed formulae) via Example \ref{ex:2}, parts (a) and
(b).

The computation shows that the periodic constant carries
information about the failure of normality of $N$ (compare with
the delta-invariant computation from the end of \ref{PC}).

The situation is similar when we consider a {\it semigroup}
of $\widehat{N}$, that is, when we eliminate the
neutral element of the above $N$  (or, when we consider a module over the
submonoid $N\subset \widehat{N}$).
\end{example}

\begin{example} {\bf Reduction of variables.}
The next statement is an example when the number of variables of the function $f$ can be reduced
in the procedure of the periodic constant computation. (For another reduction result, see Theorem \ref{th:REST}.)
 For simplicity we assume  $L'=L$.\end{example}
\begin{proposition}\labelpar{prop:pc2}
 Let $f(\bt)=\frac{\bt^{b}}{\prod_{i=1}^d (1-\bt^{a_i})}$ and
 assume that $b=\sum_{v=1}^s b_v E_v \in \calC$, where
 $\calC$ is  a chamber associated with the denominator.

 We consider  the
subset $Pos:=\{v \, : \, b_v>0\}$ with
cardinality $p$, and the projection $\R^s\to \R^p$,
defined by $(r_v)_{v=1}^s\mapsto (r_v)_{v\in Pos}$ and denoted by
$v\mapsto v^\red $. Accordingly,  we set a new function
$f^\red(\bz):=\frac{\bz^{b^\red }} {\prod_{i=1}^d (1-\bz^{a_i^\red })}$ in
$p$ variables, and a new chamber $\calC^{\red} :=\R_{\geq 0}\langle
\{w_j^\red \}_j\rangle$, where $w_j$ are the generators of
\,$\calC=\R_{\geq 0}\langle \{w_j\}_j\rangle$. Then
$\mathrm{pc}^{\calC}(f)=\mathrm{pc}^{\calC^\red }(f^\red )$.
\end{proposition}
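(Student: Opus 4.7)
The plan is to translate both $\mathrm{pc}^{\calC}(f)$ and $\mathrm{pc}^{\calC^\red}(f^\red)$ into honest lattice-point counts via the reciprocity law of Theorem~\ref{th:REC}(b), and then verify directly that, because of the sign condition on the coordinates of $b$ outside $Pos$, the two counts coincide.

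\smallskip\noindent\textbf{Step 1. Reduction to Ehrhart counts.} Since $L'=L$ there is only the trivial $H$-decomposition, and by (\ref{eq:IND}) one has $Q(l')=\calL({\bf A},\calT,l'-b)$. By Corollary~\ref{cor:Taylor}(b) combined with Proposition~\ref{prop:pc}(a),
\[
\mathrm{pc}^{\calC}(f)=\overline{Q}^{\calC}(0)=\calL^{\calC}({\bf A},\calT,-b).
\]
The hypothesis $b\in\calC$ means $-b\in-\calC$, and via Theorem~\ref{th:PQP}(c) this value can be rewritten using the reciprocity
\[
\calL^{\calC}({\bf A},\calT,-b)=(-1)^{d}\,\calL^{\calC}({\bf A},\calF\setminus\calT,b).
\]
Since $b$ now sits in the chamber $\calC$, the abstract quasipolynomial on the right coincides with its geometric meaning, namely the number of lattice points $(x_1,\dots,x_d)\in\Z^d$ with $x_i>0$ for every $i$ and with $\sum_i x_i a_i\not\geq b$. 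An analogous identity holds for $f^\red$, because $b\in\calC=\R_{\geq 0}\langle w_j\rangle$ implies $b^\red\in\calC^\red=\R_{\geq 0}\langle w_j^\red\rangle$ by construction.

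\smallskip\noindent\textbf{Step 2. The key coordinate observation.} Recall that by assumption every coordinate $a_{i,v}$ of every $a_i$ is strictly positive, while $b_v\leq 0$ exactly for $v\notin Pos$. Consequently, for any $(x_1,\dots,x_d)\in\Z^d$ with all $x_i>0$,
\[
\sum_i x_i\, a_{i,v}\;\geq\;\sum_i a_{i,v}\;>\;0\;\geq\;b_v\qquad\text{for every }v\notin Pos.
\]
Hence the violation of $\sum_i x_i a_i\geq b$ can only come from some $v\in Pos$, i.e.\ the condition $\sum_i x_i a_i\not\geq b$ is equivalent to $\sum_i x_i a_i^{\red}\not\geq b^{\red}$. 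This shows that the strict-interior lattice-point count for the original polytope equals the corresponding count for the reduced polytope:
\[
\calL^{\calC}({\bf A},\calF\setminus\calT,b)=\calL^{\calC^\red}({\bf A}^\red,\calF^\red\setminus\calT^\red,b^\red).
\]
Both $\calC$ and $\calC^\red$ are chambers for the respective denominators (the generators are images under the same projection), and in both cases $b$, resp.\ $b^{\red}$, lies in the corresponding chamber, so the quasipolynomials really agree with the counts.

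\smallskip\noindent\textbf{Step 3. Reciprocity backwards.} Applying Theorem~\ref{th:PQP}(c) a second time, now to the reduced data in dimension $d$,
\[
(-1)^{d}\,\calL^{\calC^\red}({\bf A}^\red,\calF^\red\setminus\calT^\red,b^\red)=\calL^{\calC^\red}({\bf A}^\red,\calT^\red,-b^\red)=\mathrm{pc}^{\calC^\red}(f^\red),
\]
and combining with Steps 1--2 yields $\mathrm{pc}^{\calC}(f)=\mathrm{pc}^{\calC^\red}(f^\red)$.

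\smallskip\noindent\textbf{Main obstacle.} The nontrivial point is making sure that in Step~1 the value $\calL^{\calC}({\bf A},\calF\setminus\calT,b)$ genuinely equals the geometric count at the argument $b$; this relies on $b$ lying in $\calC$ and on the careful identification (from (\ref{eq:PL}) and the surrounding discussion) of which facets are removed. Once this is in hand, the coordinate argument in Step~2 is elementary, and Step~3 is automatic. A smaller technical point is to verify that $\calC^\red$, defined by projecting the generators of $\calC$, indeed is one of the chambers associated with the denominator of $f^\red$; this follows since the cones $Cone({\bf M}_\sigma)$ in \ref{bek:combtypes} are defined by the same column vectors $a_i,-E_v$ and projection commutes with taking positive hulls on $Pos$.
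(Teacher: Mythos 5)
Your proposal is correct and follows essentially the same route as the paper's own proof: pass to the Ehrhart count $\calL^{\calC}({\bf A},\calT,-b)=(-1)^d\calL^{\calC}({\bf A},\calF\setminus\calT,b)$ via the reciprocity law of Theorem~\ref{th:PQP}(c), observe that the coordinates $v\notin Pos$ impose no constraints on lattice points with all $x_i>0$ (the paper phrases this as ``$P_v^{(b_v)}$ has only non-positive integral points,'' you phrase it via the strict inequality $\sum_i x_i a_{i,v}>0\geq b_v$ — the same observation), and apply reciprocity back. One small imprecision worth fixing: the set counted by $\calL^{\calC}({\bf A},\calF\setminus\calT,b)$ is the \emph{closed} polytope $P^{(b)}$ with only the coordinate facets removed, so the correct condition is ``$x_i>0$ for all $i$ and $\exists v:\ \sum_i x_i a_{i,v}\leq b_v$,'' not the strict $\sum_i x_i a_i\not\geq b$; this does not affect the argument since your key observation (only $v\in Pos$ matters) holds for either version of the boundary condition. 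The added remark about $\calC^\red$ being a genuine chamber of the reduced system is a worthwhile detail the paper leaves implicit.
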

\begin{proof}
This is a direct application of  Theorem \ref{th:REC}(b).
 Indeed, by the Ehrhart--MacDonald--Stanley
reciprocity law,
 we get $\mathrm{pc}^{\calC}(f)=\calL^{\calC}({\bf A},
\calT,-b)=(-1)^d\cdot \calL^{\calC}({\bf A},\calF\setminus
\calT,b)$. Since $b \in \calC$, by the very definition of
$\calL^{\calC}({\bf A},\calF\setminus \calT)$,
this (modulo the sign) equals
the number of integral points of $P^{(b)}\setminus
\cup_{F^{(b)}\in \calF\setminus \calT} F^{(b)} \subset \R^d$.
But, if  $v\notin Pos$, or $b_v\leq 0$, then in (\ref{eq:POL}) $P_v^{(b_v)}$  has only
non-positive integral points. Therefore we can
omit these polytopes without affecting the periodic constant. Then, this fact and
$b^\red \in \calC^\red$ imply that  $\mathrm{pc}^{\calC}$ can be computed as
$(-1)^d\calL^{\calC^\red}({\bf A^\red},\calF^\red\setminus \calT^\red,b^\red)$.
\end{proof}

\begin{remark}\label{rem:SZenes}
Under the conditions of Proposition \ref{prop:pc2} we have the
following application of the statement from Remark \ref{re:Szenes}
(based on \cite{SZV}): {\it Assume that  $b \in \Box(A)-\calC$ and
$b\geq 0$.  Then $\mathrm{pc}^{\calC}(f)=0$.} Indeed,
$\mathrm{pc}^{\calC}(f)=\calL^{\calC}({\bf
A},\calT,-b)=\calL^{\calC(-b)}({\bf A},\calT,-b),$ where $\calC(-b)$ is a
chamber containing $-b$. But  since $-b\leq 0$ one gets
$\calL^{C(-b)}({\bf A},\calT,-b)=0$ by \ref{prop:pc2}.

\vspace{2mm}

One of the key messages of the above examples (starting from
 \ref{ex:2}) is the following: `if $b$ is small compared with the $a_i$'s, then the periodic
constant is zero' (compare with \ref{PC} too).
\end{remark}

\subsection{The polynomial part of rational functions with
$d=s=2$} \labelpar{ss:POLPART} \

In this case $\mathrm{rank}(L)=2$, and we have two vectors in the
denominator of $f$, namely $a_i=(a_{i,1},a_{i,2})$, $i=1,2$. We
will order them in such a way that $a_2$ sits in the cone of $a_1$
and $E_1$, that is, $\det {a_{1,1}\ a_{1,2}\choose a_{2,1}\
a_{2,2}}<0$. The chamber decomposition will be the following:
$\calC_0:=\R_{\geq 0}\langle -E_1,-E_2\rangle$, $\calC_2:=\R_{\geq
0}\langle -E_1, a_1\rangle$, $\calC:=\R_{\geq 0}\langle
a_1,a_2\rangle$ and $\calC_1:=\R_{\geq 0}\langle a_2,-E_2\rangle$
(the index choice is motivated by the formulae from
\ref{ex:2}(b)).

Our goal is to write any rational function (with denominator
$(1-\bt^{a_1})(1-\bt^{a_2})$) as a sum of $f^+(\bt)$ and
$f^-(\bt)$, such that $f^+\in\Z[L']$ (the  `polynomial part of
$f$'), and $\mathrm {pc}^{e,\calC}(f^-)=0$. This is a generalization
of the decomposition in the one--variable case discussed in
\ref{PC}, and will be a major tool in the computation of the
periodic constant is section \ref{s:TN} for graphs with two nodes.
The specific  form of the decomposition  is
motivated  by Examples \ref{ex:2}(b) and \ref{ex:NAM}.

As above, we set $\Box({\bf A})=[0,1)a_1+[0,1)a_2$ and for $i=1,2$
we also consider the strips $$\Xi_i:= \{b=(b_1,b_2)\in L\otimes
\R\ |\ 0\leq b_i<a_{i,i}\}.$$
\begin{theorem}\labelpar{th:POLPART}
(1) \ Any function
$f(\bt)=\big(\sum_{k=1}^r\iota_k\bt^{b_k}\big)/\prod_{i=1}^2
(1-\bt^{a_i})$ can be written as a sum $f(\bt)=f^+(\bt)+f^-(\bt)$,
where

\vspace{1mm}

(a) \ $f^+(\bt)$ is a finite sum $\sum_{\ell} \kappa_\ell
\bt^{\beta_\ell}$, with $\kappa_\ell\in\Z$ and $\beta_\ell\in L'$;

(b) \ $f^-(\bt)$ has the form
\begin{equation}\label{eq:POLPART}
f^-(\bt)=\frac{\sum_{k=1}^r\iota_k\bt^{b'_k}}{\prod_{i=1}^2
(1-\bt^{a_i})}+
\frac{\sum_{i=1}^{n_1}\iota_{i,1}\bt^{b_{i,1}}}{1-\bt^{a_1}}+
\frac{\sum_{i=1}^{n_2}\iota_{i,2}\bt^{b_{i,2}}}{1-\bt^{a_2}},
\end{equation}
with $b'_k\in L'\cap \Box({\bf A})$ for all $k$, and $b_{i,j}\in
L'\cap \Xi_j$ for any $i$ and $j=1,2$.

\vspace{1mm}

(2) Consider a sum
\begin{equation}\label{eq:SUMPOL} \Sigma(\bt):=
\frac{Q(\bt)}{\prod_{i=1}^2 (1-\bt^{a_i})}+
\frac{Q_1(\bt)}{(1-\bt^{a_1})}+
\frac{Q_2(\bt)}{(1-\bt^{a_2})}+f^+(\bt),
\end{equation}
where $ Q(\bt):=\sum_{k=1}^r\iota_k\bt^{b'_k}$ with $b'_k\in
L'\cap \Box({\bf A})$ for all $k$;
$Q_j(\bt)=\sum_{i=1}^{n_1}\iota_{i,j}\bt^{b_{i,j}}$  with
 $b_{i,j}\in
L'\cap \Xi_j$ for any $i$ and $j=1,2$;  and finally $f^+\in\Z[L']$
is a polynomial as in part (a) above.

Then, if $\Sigma(\bt)=0$, then
$Q(\bt)=Q_1(\bt)=Q_2(\bt)=f^+(\bt)=0$.

In particular, the decomposition in part (1) is unique.

\vspace{1mm}

 (3) The periodic constant of $f^-(\bt)$ associated
with the chamber $\calC$ is zero. Hence, in the decomposition (1)
one also has
$\mathrm{pc}^{e,\calC}(f)=\mathrm{pc}^{e,\calC}(f^+)=\sum_{\ell}
\kappa_\ell [\beta_\ell]\in\Z[H]$.
\end{theorem}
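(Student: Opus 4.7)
Part (1) is an algorithmic reduction built on the partial-fractions identities
\begin{align*}
\frac{\bt^b}{(1-\bt^{a_1})(1-\bt^{a_2})}&=\frac{\bt^{b-a_1}}{(1-\bt^{a_1})(1-\bt^{a_2})}-\frac{\bt^{b-a_1}}{1-\bt^{a_2}},\\
\frac{\bt^b}{1-\bt^{a_j}}&=\frac{\bt^{b-a_j}}{1-\bt^{a_j}}-\bt^{b-a_j},
\end{align*}
together with their inverses (rearranged to shift $b$ by $+a_j$). Since $\{a_1,a_2\}$ is an $\R$-basis of $L\otimes\R$, every $b\in L'$ has a unique expression $\mu_1 a_1+\mu_2 a_2$; applying the first identity in the appropriate direction $|\lfloor\mu_1\rfloor|+|\lfloor\mu_2\rfloor|$ times brings the exponent of the double-denominator term into $\Box({\bf A})$. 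The single-denominator side terms produced in this process are then reduced further by the second identity until their exponents lie in $\Xi_j$. All the Laurent-monomial residues produced along the way are collected into $f^+$.

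For part (2), the plan is to decompose the identity $\Sigma=0$ by cosets of the sublattice $\Lambda:=\Z a_1+\Z a_2$ in $L'$. Since $\Box({\bf A})$ is a fundamental domain for the $\Lambda$-action on $L\otimes\R$, the set $\Box({\bf A})\cap L'$ is a complete transversal for $L'/\Lambda$, so on each coset $c+\Lambda$ the contribution of $Q$ is either zero or a single monomial. The defining constraint $b_1\in[0,a_{1,1})$ of $\Xi_1$, when written in the form $b=c+m_1 a_1+m_2 a_2$, becomes a length-$1$ interval in $m_1$ for each fixed $m_2$, so on each coset the contribution of $Q_1$ involves exactly one $m_1$-exponent per $m_2$-exponent; symmetrically for $Q_2$. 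After factoring $\bt^c$ and passing to the variables $s_j:=\bt^{a_j}$, the coset identity becomes a rational-function identity in $(s_1,s_2)$. Multiplying by $(1-s_1)$ and specializing $s_1\to 1$ yields $\tilde Q_c|_{s_1=1}+(1-s_2)\tilde Q_{1,c}|_{s_1=1}=0$; as $\tilde Q_c$ is a single monomial not divisible by $(1-s_2)$, this forces $\tilde Q_c=0$ and $\tilde Q_{1,c}|_{s_1=1}=0$. The one-exponent-per-line support structure of $\tilde Q_{1,c}$ then yields $\tilde Q_{1,c}=0$; the symmetric specialization $s_2\to 1$ yields $\tilde Q_{2,c}=0$, and thus $\tilde f^+_c=0$. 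The main subtlety is making the support constraints (inherited from $\Xi_1,\Xi_2$) match exactly with the $(1-s_j)$-divisibilities produced by specialization.

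For part (3), by linearity it suffices to verify that the periodic constant on $\calC$ vanishes on each individual summand of $f^-$. For $\iota_k\bt^{b'_k}/\prod_i(1-\bt^{a_i})$ with $b'_k\in\Box({\bf A})\cap L'$ the argument is exactly that of Lemma \ref{lem:d2}: by Theorem \ref{th:PQP}(c) the periodic constant equals, up to sign, the cardinality of $\{m=n_1 a_1+n_2 a_2\mid n_1,n_2>0,\ m\not>b'_k\}$, and since $b'_k\in\Box({\bf A})$ gives $a_1+a_2>b'_k$ coordinatewise while every such $m$ satisfies $m\ge a_1+a_2$, this set is empty. For the single-denominator summands $\iota\bt^{b}/(1-\bt^{a_j})$ with $b\in\Xi_j$, the chamber $\calC=\langle a_1,a_2\rangle$ sits inside the chamber $\langle a_1,-E_2\rangle$ (when $j=1$) or $\langle -E_1,a_2\rangle$ (when $j=2$) of the coarser single-ray chamber decomposition; Example \ref{ex:2}(b) then gives the periodic constant $\lceil-b_j/a_{j,j}\rceil$, which vanishes because $0\le b_j<a_{j,j}$.
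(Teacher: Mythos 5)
Your proof is correct and follows the paper's own (quite terse) sketch closely: part (1) is the same partial-fraction reduction alluded to in Remark~\ref{rem:POL}(a), and part (3) cites exactly the paper's ingredients (the proof of Lemma~\ref{lem:d2} for the double-denominator fraction, Example~\ref{ex:2}(b) together with Proposition~\ref{prop:pc}(c) for the single-denominator ones). For part (2) the paper sketches a filtration argument by integer translates of $\Xi_1$ and $\Xi_2$; you instead decompose by cosets of $\Lambda=\Z a_1+\Z a_2$, pass to the variables $s_j=\bt^{a_j}$, and use the specializations $s_j\to 1$ — this is a mild variant of the same elementary-algebra idea, and arguably cleaner, since it avoids worrying about the unboundedness of the support of $Q_2/(1-\bt^{a_2})$ in the $\Xi_1$-filtration. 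The key support observations you invoke (that $\Box({\bf A})\cap L'$ is a transversal for $L'/\Lambda$, so each coset sees at most one monomial of $Q$; and that the $\Xi_j$-condition pins down $m_j$ uniquely for each choice of the other index, so specializing $s_j\to 1$ kills $Q_j$) are both correct.
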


\begin{remark}\label{rem:POL} (a)
In the expression of $f$ and $f^-$ above we wished to emphasize
that even the summation $\sum_{k=1}^r$ and coefficients $\iota_k$
are preserved when we decompose  $f$ into $f^-$ and $f^+$. In
fact, for every $b_k\in L'$ we have a {\it unique} $b_k'\in L'\cap
\Box({\bf A})$ such that $b_k-b_k'\in \Z\langle a_i\rangle$. This
is how we get from the expression of $f$ the first fraction in
(\ref{eq:POLPART}).

(b) This decomposition is associated with a choice of a chamber:
here the chamber is $\calC$, and the decomposition satisfies
property (3) of the chamber $\calC$, where this choice is hidden.

Although in the present work we will not use any other
decomposition, we note that in general any chamber $\calC_*$
provides a decomposition with $f^+\in\Z[H]$ and
$\mathrm{pc}^{\calC_*}(f^-)=0$   (and usually these decompositions
are different). For example, for $\calC_0$,
$\mathrm{pc}^{\calC_0}(f)=0$, hence we can take $f^+=0$.
\end{remark}
\begin{proof}
First one determines the first fraction of $f^-$ as it is
explained in Remark \ref{rem:POL}(a). Then one has to decompose
fractions of type $(1-\bt^{k_1a_1+k_2a_2})/\prod_{i=1}^2
(1-\bt^{a_i})$ which is again  elementary.

Part (2) is again elementary algebra. Or, proceed as follows. The
vanishing of $Q$ follows again by the unicity of the choice of
$b_k'$ in \ref{rem:POL}(a). For the others, take a convenient
filtration of $\Z[L']$ (e.g. by integral multiples of $\Xi_1$,
resp. of $\Xi_2$).

The vanishing of the periodic constant of the first fraction of
$f^-$ follows from the proof of Lemma \ref{lem:d2}. The vanishing
of $\mathrm{pc}^{e,\calC}$ of the other two fractions follows from
Example  \ref{ex:2}(b). For the last expression see Example
\ref{ex:2}(a).
\end{proof}

\section{The case of rational functions associated with  plumbing graphs }\labelpar{s:PLRF}

\subsection{A `classical' connection between  polytopes and
gauge invariants (and its limits).}\labelpar{ss:CLASSI} \

\vspace{2mm}

\noindent In the literature of normal surface singularities there
is a sequence  of results which connect the topology of the link
with the number of lattice points in a certain polytope. Here are
some details.

The first step is based on the theory of hypersurface
singularities with Newton nondegenerate principal part, see e.g.
\cite{AGV}. According to this, for such a germ one defines the
Newton polytope $\Gamma^-_N$ using the nontrivial monomials of the
defining equation of the germ, and one proves that several
invariants of the germ can be recovered from $\Gamma^-_N$, see
e.g. \cite{BN07}. E.g., by a result of Merle and Teissier
\cite{MT}, the geometric genus $p_g$ equals with the number of
lattice points in $((\Z_{>0})^3\cap \Gamma^-_N)$. The second step
is provided by Laufer--Durfee formula, which determines the
signature of the Milnor fiber $\sigma$ as $-8p_g-K^2-|\cV|$
\cite{D78}.  Finally, there is a conjecture of Neumann and Wahl,
formulated for hypersurfaces with integral homology sphere links
\cite{NW91}, and proved e.g. for Brieskorn, suspension \cite{NW91}
and splice quotient  \cite{NO1} singularities, according to which
$\sigma/8=\lambda(M)$, the Casson invariant of the link.
Therefore, if all these steps run, e.g. in the Brieskorn case,
then the Casson invariant of the link, normalized by $K^2+|\cV|$,
 can be expressed as the number of lattice points
of a polytope associated with the equation of the germ.

[For the computations of the lattice points in the case
simplicial polytopes in terms of Dedekind sums see e.g. \cite{BP,Beck_c,BR1,DR} and the
citations therein, for its relation with the 
Riemann Roch formulae see e.g. \cite{CS,KK92,P} or literature of classical toric geometry, while 
for the relation of Dedekind sums with the Casson invariant see the classical
book \cite{Lescop}.]

 The above  correspondence has several deficiencies. First, even in simple cases,
we do not know how to extend the correspondence to the equivariant
case (that is, how to express the equivariant geometric genus from
$\Gamma^-_N$). Second, the expected generalization, the
Seiberg--Witten Invariant Conjecture (see \ref{FM}), which aims to
identifies    the Seiberg--Witten invariant of the link with $p_g$
(or $\sigma$) is still open for Newton nondegenerate germs. 
Finally, this  family of germs is rather
 restrictive. [Additionally, as a general fact about
 lattice point computations, in the literature there very few explicit formulae for
 the Ehrhart polynomial of non--simplicial polyhedrons.]

 The present article defines another  polytope,  which carries an action of the group $H$, and
its Ehrhart invariants
determine the Seiberg--Witten invariant {\it in any case}. It is not described from
the equations of the germ, but from its multivariable `zeta-function' $Z(\bt)$.
Furthermore, the polytope is a union of several simplices, and those coefficients 
of the Ehrhart polynomail which carry the information about the Seiberg--Witten invariant 
will be determined. 

\subsection{The new construction. Applications of Section \ref{s:PC}.}\labelpar{ss:cor} \

\vspace{2mm}

Consider the topological setup of a surface singularity, as in
subsection \ref{ss:11}. The lattice $L$ has  a canonical basis
$\{E_v\}_{v\in \calv}$ corresponding to the vertices of the graph $\Gamma$.
We investigate  the periodic constant of the rational function $Z(\bt)$, defined  in
\ref{SW} from $\Gamma$. Since  $Z(\bt)$ has the form
(\ref{eq:func}), all the results of section \ref{s:PC} can be applied.
In particular, if $\cale=\{v\in\calv\,:\, \delta_v=1\}$ denotes the set of {\em ends} of the
graph, then ${\bf A}$ has column vectors $a_v=E_v^*$ for $v\in\cale$. Hence,
the rank of the lattice/space where the polytopes $P^{(l')}=\cup_vP_v$ sit is $d=|\cale|$,
and the convex polytopes $\{P_v\}$ are indexed by $\calv$. Furthermore, the dilation parameter $l'$
of the polytopes runs in a $|\cV|$--dimensional space.   In the sequel we will drop
the symbol ${\bf A}$ from $\calL^\calC_h({\bf A},\calT,l')$.

[The construction has some analogies with the construction of the splice quotient singularities
\cite{nw-CIuac}: in that case  the equations of the universal abelian cover of the
singularity are written in $\C^d$, together with an action of $H$. Nevertheless, in the present situation,
we are not obstructed with the semigroup and congruence relations present in that theory.]

In this new construction, a crucial additional ingredient comes
from singularity theory, it is Theorem \ref{th:JEMS} (in fact,
this is the main starting point and motivation of the whole
approach). This combined with facts from Section \ref{s:PC} give:
 \begin{corollary}\labelpar{cor:4.1}
 Let $\calS=\calS_{\R}$ be the (real) Lipman cone $\{x\in \R^{|\cV|}: (x,E_v)\leq 0 \ \mbox{for all $v$}\}$.

  (a) The rational function
  $Z(\bt)$ admits a periodic constant in the cone $\calS$, which  equals the
   normalized Seiberg--Witten invariant
 \begin{equation}\label{eq:4.2}
 \mathrm{pc}^{\calS}_h(Z)=
-\frac{(K+2r_h)^2+|\cV|}{8}-\frsw_{-h*\sigma_{can}}(M).\end{equation}

(b)  Consider the chamber decomposition associated with the
 denominator  of $Z(\bt)$ as in Theorem \ref{th:PQP}, and let $\calC$ be a chamber
such that $int(\calC\cap \calS)\not=\emptyset$.
Then $Z(\bt)$ admits a periodic constant in $\calC$,
which  equals both  $\mathrm{pc}^{\calS}_h(Z)$ (satisfying  (\ref{eq:4.2})) and also
\begin{equation}\label{eq:PCC2}
{\mathrm{pc}}_h^\calC(Z)\, = \,
\sum_k\iota_k\cdot\calL^\calC_{h-[b_k]}(\calT,-b_k)=
\sum_k\iota_k\cdot\calL^\calC_{[b_k]-h}(\calF\setminus \calT,b_k).
\end{equation}
In particular, ${\mathrm{pc}}_h^\calC(Z)$ does not depend on the choice of \,$\calC$
(under the above assumption).
\end{corollary}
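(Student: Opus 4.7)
The strategy is to combine the explicit formula of Theorem~\ref{th:JEMS} with the general periodic-constant framework of Section~\ref{s:PC}.

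I first identify the counting function $Q_h(l')$ from (\ref{eq:PCDEFa}) with the left-hand side of (\ref{eq:SUM}): for $l' \in L'$ with $[l'] = h$, writing $l'' = l' + l$ with $l \in L$ gives
$$
Q_h(l') = \sum_{l'' \not\geq l',\ [l''] = h} p_{l''} = \sum_{l \in L,\ l \not\geq 0} p_{l' + l},
$$
which is exactly the sum appearing in Theorem~\ref{th:JEMS}. The JEMS condition $a_v \geq -(E_v^2 + 1)$ is then satisfied on any translate $l'_* + \calS$ once the $E_v^*$-coordinates of $l'_*$ are large enough; on such a sub-cone $Q_h(l')$ coincides with the quadratic polynomial on the right-hand side of (\ref{eq:SUM}).

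To fit into Definition~\ref{def:pc}, I take $\widetilde{L} = L$ and define the quasipolynomial
$$
\widetilde{Q}_h(l) := -\frsw_{-h*\sigma_{can}}(M) - \frac{(K + 2r_h + 2l)^2 + |\cV|}{8}, \qquad l \in L.
$$
A short direct argument (along the lines of the proof of Corollary~\ref{cor:Taylor}(c)) shows that $Q_h(l) = Q_h(r_h + l)$ for $l \in L$: for $m, l \in L$ and $r_h \in \square$ the inequality $r_h + m \not\geq l$ is equivalent to $m \not\geq l$, so the two counting functions sum over the same set. Applying Theorem~\ref{th:JEMS} at $r_h + l$ then gives $Q_h(l) = Q_h(r_h + l) = \widetilde{Q}_h(l)$ on $L \cap (l'_* + \calS)$. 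Hence $Z$ admits a periodic constant in $\calS$, and evaluating at $l = 0$ yields (\ref{eq:4.2}).

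Part (b) is essentially a direct consequence of the theory of Section~\ref{s:PC}. Proposition~\ref{prop:pc}(a) supplies the periodic constant in any chamber $\calC$; the first equality in (\ref{eq:PCC2}) follows by inserting the expansion (\ref{eq:func}) of $Z$ into (\ref{eq:qbar}) and (\ref{eq:PCC}). The coincidence $\mathrm{pc}_h^\calC(Z) = \mathrm{pc}_h^\calS(Z)$ under the hypothesis $\mathrm{int}(\calC \cap \calS) \neq \emptyset$ is Proposition~\ref{prop:pc}(c). The second equality in (\ref{eq:PCC2}) is then obtained by applying the Ehrhart--MacDonald--Stanley reciprocity of Theorem~\ref{th:REC}(b) termwise to each $\calL_{h-[b_k]}^\calC(\calT,-b_k)$.

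The delicate point I expect to be the main obstacle is the interplay between the lattices $L$ and $L'$: the JEMS polynomial lives naturally on class-$h$ elements of $L'$, whereas Definition~\ref{def:pc} requires a quasipolynomial on a sublattice of $L$. The identity $Q_h(l) = Q_h(r_h + l)$ is exactly what bridges these two viewpoints and allows the abstract periodic-constant machinery to be applied to the topological series $Z(\bt)$.
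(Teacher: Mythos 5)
Your proof is correct and matches the paper's own argument: part (a) is the same shift-by-$r_h$ computation the authors perform by writing $l'=\tilde l+r_h$ (your identity $Q_h(l)=Q_h(r_h+l)$ is exactly the observation $\sum_{l\in L,\, l\not\geq 0}p_{l'+l}=\sum_{l''\not\geq\tilde l,\,[l'']=h}p_{l''}$), followed by Theorem~\ref{th:JEMS}; and part (b) is obtained from Corollary~\ref{cor:Taylor} and Proposition~\ref{prop:pc} just as you describe, with the second formula in (\ref{eq:PCC2}) coming from the reciprocity law.
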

\begin{proof}
 Write  $l'=\tilde{l}+r_h$ with $\tilde{l}\in L$ in  (\ref{eq:SUM}).
Since
$\sum_{l\in L,\, l\not\geq 0}p_{l'+l}=
\sum_{l''\not\geq \tilde{l},\, [l'']=h}p_{l''}$,
(a)  follows from Theorem \ref{th:JEMS}. For  (b) use Corollary \ref{cor:Taylor} and
 Proposition \ref{prop:pc}.
\end{proof}

We note that the Lipman cone $\calS$ can indeed be cut in several
chambers (of the denominator of $Z$). This can happen even in the
simple case of Brieskorn germs.  Below we provide such an example
together with several exemplifying details of the construction.

\begin{example}\labelpar{ex:tref} {\bf Lipman cone cut in several chambers.} \
Consider the 3--manifold $S^3_{-1}(T_{2,3})$ (where $T_{2,3}$
is the right-handed trefoil knot), or, equivalently,
 the link of the
hypersurface singularity $z_1^2+z_2^3+z_3^7=0$. Its
plumbing graph $\Gamma$ and matrix $-I^{-1}$ are:

\begin{picture}(200,75)(60,10)
\put(150,55){\circle*{3}}
\put(180,55){\circle*{3}}
\put(120,55){\circle*{3}}
\put(150,25){\circle*{3}}
\put(150,55){\line(-1,0){30}}
\put(150,55){\line(1,0){30}}
\put(150,55){\line(0,-1){30}}
\put(150,70){\makebox(0,0){$E_0$}}
\put(180,70){\makebox(0,0){$E_3$}}
\put(120,70){\makebox(0,0){$E_1$}}
\put(135,25){\makebox(0,0){$E_2$}}
\put(160,45){\makebox(0,0){$-1$}}
\put(105,55){\makebox(0,0){$-2$}}
\put(195,55){\makebox(0,0){$-7$}}
\put(165,25){\makebox(0,0){$-3$}}
\put(345,45){\makebox(0,0){$-I^{-1}=\begin{pmatrix}
42&21&14&6\\
21&11&7&3\\
14&7&5&2\\
6&3&2&1
\end{pmatrix}$}}
\end{picture}

\noindent
 where the row/column vectors of $-I^{-1}$ are $E_0^*$,
$E_1^*$, $E_2^*$ and $E_3^*$ in the $\{E_v\}$ basis. The polytopes defined in (\ref{eq:POL}), or in (\ref{eq:Pv}),
with parameter $l=(l_0,l_1,l_2,l_3)\subset \Z^4$, sit in $\R^3$. Let $u_1,u_2,u_3$ be the basis of
$\R^3$. Then the polytopes  are the following convex closures:
\begin{eqnarray*}
 P_0^{(l)}& = & conv\left(0,\left(l_0/21\right)u_1,\left(l_0/14\right)u_2,
\left(l_0/6\right)u_3\right)\\
 P_1^{(l)}& = & conv\left(0,\left(l_1/11\right)u_1,\left(l_1/7\right)u_2,
\left(l_1/3\right)u_3\right)\\
 P_2^{(l)}& = & conv\left(0,\left(l_2/7\right)u_1,\left(l_2/5\right)u_2,
\left(l_2/2\right)u_3\right)\\
 P_3^{(l)}& = & conv\left(0,\left(l_3/3\right)u_1,\left(l_3/2\right)u_2,
\left(l_3\right)u_3\right).
\end{eqnarray*}
Since $E_0^*+\varepsilon(-E_0)$ is in the interior of the (real) Lipman cone
for $0<\varepsilon \ll 1$, we get that the Lipman cone is cut in several chambers.
The periodic constant can be computed with any of them.
In fact, by the continuity of the quasipolynomials associated with the chambers, any quasipolynomial
associated with a chamber which contains
any ray in the Lipman cone, even if it is situated at its boundary, provides the
periodic constant.
One such degenerated polytope provided by a ray on the boundary of $\calS$ is of special interest. Namely,
if we take $l=\lambda E_0^* \in \calS$ for $\lambda>0$, then $P^{(l)}=\bigcup_{v=0}^3 P_v^{(l)}$ is
the same as $P_0^{(l)}=conv(0,2\lambda u_1,3\lambda u_2,7\lambda u_3)$.
%
Moreover, if  $\calC$ is any chamber which contains the ray
$\R_{\geq 0} E_0^*$ at its boundary, then for any $l=\lambda
E^*_0$ one has
$\calL^{\calC}({\bf
A},\calT,l)=\calL(\widetilde{P}_0,\calT,\lambda)$, 
 where the last
is the classical Ehrhart polynomial of the tethrahedron
$\widetilde{P}_0:=conv(0,2 u_1, 3u_2,7u_3)$. Here we witness an
additional  coincidence of $\widetilde{P}_0$ with the Newton
polytope $\Gamma^-_N$ of the equation $z_1^2+z_2^3+z_3^7$.

We compute $ \calL(\widetilde{P}_0,\calT,\lambda)$  as follows.
From (\ref{eq:KV2})--(\ref{eq:KV2b}) and Corollary
\ref{cor:Taylor}, we get that
\begin{equation}\label{eq:NEW}
\chi(\lambda E_0^*)+\mbox{geometric genus of }
\{z_1^2+z_2^3+z_3^7=0\}=\calL(\widetilde{P}_0,\calT,\lambda)-\calL(\widetilde{P}_0,\calT,\lambda-1).
\end{equation}
Since this geometric genus is 1, and the free term of
$\calL(\widetilde{P}_0,\calT,\lambda)$ is zero (since for $\lambda
=0$ the zero polytope with boundary conditions contains no lattice
point), and $-K=2E_0+E_1+E_2+E_3$, we get that
$\calL(\widetilde{P}_0,\calT,
\lambda)=7\lambda^3+10\lambda^2+4\lambda$.
We emphasize that a formula as in (\ref{eq:NEW}), realizing a bridge between
the Riemann--Roch expression $\chi$ (supplemented with the geometric genus)
and the Ehrhart polynomial of the Newton diagram, was not known
for Newton nondegenerate germs.

\vspace{2mm}

 In the sequel we will provide several examples, when
the Newton polytope is not even defined.
\end{example}

\subsection{Example. The case of lens spaces}\labelpar{ex:lens} \

\vspace{2mm}

\bekezdes \ As we will see in  Theorem \ref{th:REST},
the complexity of the problem depends basically  on the number of nodes
$\calN=\{v\in \cV:\delta_v\geq 3\}$ of  $\Gamma$.
 In this subsection we treat the case when there are no nodes at all, that is $M$ is a
lens space. In this case the numerator of the rational function $f(\bt)$ is 1, hence
everything is described by the 2--dimensional polytopes determined by the denominator.
In the literature there are several results about lens spaces fitting in the present program,
here we collect the relevant ones completing with the new interpretations.
This subsection also serves as a preparatory part, or model,  for the study of chains of arbitrary graphs.

\bekezdes {\bf The setup.}
Assume that the plumbing graph is
\begin{picture}(170,20)(80,15)
\put(100,20){\circle*{3}}
\put(130,20){\circle*{3}}
\put(200,20){\circle*{3}}
\put(230,20){\circle*{3}}
\put(100,20){\line(1,0){50}}
\put(230,20){\line(-1,0){50}}
\put(165,20){\makebox(0,0){$\cdots$}}
\put(100,30){\makebox(0,0){$-k_1$}}
\put(130,30){\makebox(0,0){$-k_2$}}
\put(230,30){\makebox(0,0){$-k_s$}}
\put(200,30){\makebox(0,0){$-k_{s-1}$}}
\end{picture}
with all $k_v\geq 2$, and
$p/q$ is expressed via the (Hirzebruch, or negative) continued fraction
\begin{equation}\label{eq:HCF}
[k_1,\ldots, k_s]=k_1-1/(k_2-1/(\cdots -1/k_s)\cdots ).
\end{equation}
Then $M$ is the lens space $L(p,q)$. We also define $q'$ by
$q'q\equiv 1$ mod $p$, and $0\leq q'<p$. Furthermore, we set $g_v=[E^*_v]\in H$.
Then $g_s$ generates $H=\Z_p$, and any element
of $H$ can be written as $ag_s$ for some $0\leq a<p$. Recall the definitions of
$r_h$ and $s_h$ from \ref{ss:11} as well.

From analytic point of view $(X,o) $  is a cyclic quotient singularity
$(\C^2,o)/\Z_p$, where the action is $\xi*(x,y)=(\xi x,\xi^q y)$ (here $\xi$ runs over $p$--roots of unity).

\bekezdes {\bf The Seiberg--Witten invariant.} Since $(X,o)$ is rational,
in this case $Z(\bt)=P(\bt)$ (cf. subsection \ref{FM}). Moreover, in (\ref{eq:KV2b})
 $H^1(\cO_{\widetilde{Y}})=0$, hence
\begin{equation}\label{eq:lens2}
 \frsw_{-h*\sigma_{can}}(M)=
-\frac{(K+2r_h)^2+|\cV|}{8}=-\frac{K^2+|\cV|}{8}+\chi(r_h).
\end{equation}
On the other hand, in  \cite{NOSZ,NLC} a similar formula is proved for the SW-invariant:
one only has to replace in (\ref{eq:lens2}) $\chi(r_h)$ by $\chi(s_h)$.
In particular, for lens spaces, and for any $h\in H$ one has
\begin{equation}\label{eq:lens3}
\chi(r_h)=\chi(s_h).
\end{equation}
[Note that, in general, for other links, $\chi(r_h)>\chi(s_h)$ might happen, see Example~\ref{ex:sh}.
Here, (\ref{eq:lens3})
follows from the vanishing of the geometric genus of the universal abelian cover of ($X,o)$.]

In general,  the coefficients of
the representatives $s_{ag_s}$ and $r_{ag_s}$ ($0\leq a<p$)
are rather complicated arithmetical expressions;
for $s_{ag_s}$ see \cite[10.3]{NOSZ} (where $g_s$ is defined with opposite sign).
The value  $\chi(s_{ag_s})$ is computed in
\cite[10.5.1]{NOSZ} as
\begin{equation}\label{eq:lens1}
\chi(s_{ag_s}) =\frac{a(1-p)}{2p} +\sum_{j=1}^a \Big\{\frac{jq'}{p}\Big\}.
\end{equation}

For completeness of the discussion we also recall that
$K=E_1^*+E_s^*-\sum_vE_v$ and
\begin{equation}\label{eq:lens4}
(K^2+|\cV|)/4=(p-1)/(2p)-3\cdot \bms(q,p),
\end{equation}
cf. \cite[10.5]{NOSZ}, where $\bms(q,p)$ denotes the Dedekind sum
\[
\bms(q,p)=\sum_{l=0}^{p-1}\Big(\Big( \frac{l}{p}\Big)\Big)
\Big(\Big( \frac{ ql }{p} \Big)\Big), \ \mbox{where} \ \
((x))=\left\{
\begin{array}{ccl}
\{x\} -1/2 & {\rm if} & x\in {\R}\setminus {\Z}\\
0 & {\rm if} & x\in {\Z}.
\end{array}
\right.
\]
In particular,  $\frsw_{-h*\sigma_{can}}(M)$ is determined via the formulae
(\ref{eq:lens2}) -- (\ref{eq:lens4}).

The non-equivariant picture looks as follows:
 $\sum_h \frsw_{-h*\sigma_{can}}=\lambda$,
the Casson--Walker invariant of $M$, hence (\ref{eq:lens2}) gives
$$\lambda=-p(K^2+|\cV|)/8+\textstyle{\sum_h}\chi(r_h).$$
This is compatible with (\ref{eq:lens4}) and formulae
$\lambda(L(p,q))=p\cdot\bms(q,p)/2$ and $\sum_h\chi(r_h)=(p-1)/4-p\cdot \bms(q,p)$, cf.
\cite[10.8]{NOSZ}.

\bekezdes {\bf The polytope and its quasipolynomial.}
We compare the above data with Ehrhart theory.
In this case $Z(\bt)=(1-\bt^{E^*_1})^{-1}(1-\bt^{E^*_s})^{-1}$.
The vectors $a_1=E^*_1$ and $a_s=E^*_s$ determine the polytopes
$P^{(l')}$ and a chamber decomposition.

For $1\leq v\leq w\leq s$ let $n_{vw}$ denote by the numerator of the continued fraction
$[k_v,\ldots,k_w]$ (or, the determinant of the corresponding bamboo subgraph).
For example, $n_{1s}=p$, $n_{2s}=q$ and $n_{1,s-1}=q'$. We also set
$n_{v+1,v}:=1$. Then $pE^*_1=\sum_{v=1}^sn_{v+1,s}E_v$ and $pE^*_s=\sum_{v=1}^sn_{1,v-1}E_v$.

In particular, for any $l'=\sum_vl'_vE_v\in\calS'$, the (non-convex) polytopes are
\begin{equation}\label{eq:lens5}
P^{(l')}=\bigcup_{v=1}^s\Big\{\,(x_1,x_s)\in\R_{\geq 0}^2\,:\, x_1n_{v+1,s}+x_sn_{1,v-1}\leq pl'_v\Big\}
\subset \R_{\geq 0}^2.
\end{equation}
The representation $\Z^2\stackrel{\rho}{\longrightarrow}\Z_p$ is $(x_1,x_s)\mapsto (qx_1+x_s)g_s$.

Though $P^{(l')}$ is a plane polytope, the direct computation of its equivariant Ehrhart multivariable polynomial
(associated with a chamber, or just with the Lipman cone)
 is still highly non-trivial. Here  we will rely again on  Theorem \ref{th:JEMS}.
On a subset of type $l'_0+\calS'$ the identity (\ref{eq:SUM}) provides the counting function. The right hand side
of (\ref{eq:SUM})  depends on all the
coordinates of $l'$, hence all the  triangles $P_v$ contribute in $P^{(l')}$. Since this can happen only in
a unique combinatorial way, we get that there is a chamber $\calC$ which contains the Lipman cone.
Let $\calL^{e,\calC}$ be its quasipolynomial, and  $\calL^{e,\calS}$ its restriction on $\calS$.
Since the numerator of $Z(\bt)$ is 1, $\overline{Q}^{\calC}_h=\calL^{\calC}_h$.
Since this agrees with the right hand side of (\ref{eq:SUM}) on a cone of type
$l'_0+\calS'$, and the Lipman cone is in $\calC$,  we get that
\begin{equation}\label{eq:lens6}
Q_h(l')=\overline{Q}^{\calC}_h(l')=\calL^{\calS}_h(l')=-\frsw_{-h*\sigma_{can}}(M)-\frac{(K+2l')^2+|\cV|}{8}
\end{equation}
for any $l'\in (r_h+L)\cap \calS'$ and  $h\in H$.
Using the identity (\ref{eq:lens2}), this reads as
\begin{equation}\label{eq:lens7}
\calL^{\calS}_h(\calT,l')=\chi(l')-\chi(r_h),
\ \ \ l'\in (r_h+L)\cap \calS'.
\end{equation}
Note that for any fixed $h$ and any $l'$ there exists a unique $q=q_{l',h}\in \square$ such that
$l'+q:=l''\in r_h+L$.
Indeed, take for $q$ the representative of $r_h-l'$ in $\square$. Then (\ref{eq:cccc})
and (\ref{eq:lens7}) imply
\begin{equation}\label{eq:Lg}
\calL^{\calS}_h(\calT,l')
=\calL^{\calS}_h(\calT,l'')=\chi(l'+q_{l',h})-\chi(r_h).\end{equation}
This formula emphasizes the periodic behavior of  $\calL^{\calS}_h(\calT,l')$ as well.

If $l'$ is an element of  $L$
then $q_{l',h}=r_h$, hence (\ref{eq:Lg}) gives in this case
\begin{equation}\label{eq:Lg2}
\calL^{\calS}_h(\calT,l)
=\chi(l+r_h)-\chi(r_h)=\chi(l)-(l,r_h) \ \ \ \mbox{for $l\in L\cap \calS$}.\end{equation}
In particular,
$\mathrm{pc}(\calL_h^\calS(\calT))=\chi(r_h)-\chi(r_h)=0$ (a fact compatible with  $H^1(\cO_{\widetilde{Y}})=0$).



\vspace{2mm}

Even the non-equivariant case looks rather interesting.
Let $\calL^{\calS}_{ne}(\calT)=\sum_{h\in H}\calL^{\calS}_h(\calT)$ be the Ehrhart polynomial
of $P^{(l')}$ (with boundary condition $\calT$), where we count all the lattice points
independently of their class in $H$.
Then,  (\ref{eq:Lg2}) gives
for $ l\in L\cap \calS$
\begin{equation}\label{eq:lens8}
\calL^{\calS}_{ne}(\calT,l)=p\cdot \chi(l)-(l,\textstyle{\sum_h}r_h)=
-p\cdot
(l,l)/2-p\cdot (l,K)/2-(l,\textstyle{\sum_h}r_h).
\end{equation}
In fact, $\sum_hr_h$ can  explicitly be computed. Indeed, set
 $d_v={\mathrm{gcd}}(p,n_{1,v-1})$ and  $p_v=p/d_v$.  Then one
checks that $aE^*_s=\sum_vn_{1,v-1}\frac{a}{p}E_v$,
$r_h=\sum_v\big\{n_{1,v-1}\frac{a}{p}\big\}E_v$ and $\sum_hr_h=\sum_vd_v\frac{p_v-1}{2}E_v$.

The coefficients of the polynomial $\calL^{\calS}_{ne}(\calT,l)$
can be compared with the coefficients given by general theory
of Ehrhart polynomials applied for $P^{(l)}$. E.g., the leading coefficient gives
 \begin{equation*}\label{eq:lens9}
-p \cdot(l,l)/2= \ \mbox{Euclidian area of }\ P^{(l)}.
\end{equation*}
Knowing that in $P^{(l)}$ all the $P_v$'s contribute, and it depends on $s$ parameters, and the
intersection of their boundary is messy, the simplicity and conceptual form of
(\ref{eq:lens8}) is rather remarkable.

\subsection{Reduction of the variables of $Z(\bt)$}\labelpar{ss:REST} \

\vspace{2mm}

Let $\calN$ denote the set of nodes $\{v\in\calv\,:\, \delta_v\geq 3\}$. Let
$\calS_\calN$ be the positive cone
$\R_{\geq 0}\langle E^*_n\rangle_{n\in \calN}$  generated by the dual base elements indexed by $\calN$, and  $V_\calN:= \R\langle E^*_n\rangle_{n\in \calN}$
be its supporting linear subspace in $L\otimes \R$. Clearly  $\calS_\calN\subset \calS$.
Furthermore, consider  $L_\calN:=\Z\langle E_n\rangle_{n\in \calN}$ generated by the node
base elements, and
the projection $pr_\calN:L\otimes \R\to L_\calN\otimes \R$ on the node coordinates.

\begin{lemma}\label{lem:RES}
The restriction of $pr_\calN$ to $V_\calN$, namely $pr_\calN:V_\calN\to L_\calN\otimes \R$, is an isomorphism.
\end{lemma}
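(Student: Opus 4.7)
The plan is to reduce the claim to injectivity, and then to identify the matrix of $pr_\calN|_{V_\calN}$ in natural bases with a principal submatrix of a (negative) definite symmetric matrix. Since both $V_\calN$ and $L_\calN\otimes \R$ are real vector spaces of dimension $|\calN|$, it suffices to establish injectivity of $pr_\calN|_{V_\calN}$.

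The first step is to determine the coordinates of $E^*_v$ in the basis $\{E_u\}_{u\in\cV}$. Writing $E^*_v=\sum_u (E^*_v)_u E_u$ and combining with the paper's convention $(E^*_v,E_w)=-\delta_{vw}$ yields $\sum_u (E^*_v)_u I_{uw}=-\delta_{vw}$, so the coefficient matrix $((E^*_v)_u)_{v,u}$ equals $-I^{-1}$, and in particular $(E^*_v)_u=-(I^{-1})_{vu}$. Consequently
\[
pr_\calN(E^*_n)=\sum_{m\in\calN}(E^*_n)_m E_m=-\sum_{m\in\calN}(I^{-1})_{nm}\,E_m,
\]
so the matrix of $pr_\calN|_{V_\calN}$, with respect to the bases $\{E^*_n\}_{n\in\calN}$ of $V_\calN$ and $\{E_m\}_{m\in\calN}$ of $L_\calN\otimes\R$, is precisely the principal submatrix $-(I^{-1})_{\calN,\calN}$.

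The final step is to invoke the hypothesis that $I$ is negative definite. Then $-I$, and hence $-I^{-1}$ (whose eigenvalues are the reciprocals of those of $-I$), is positive definite. Any principal submatrix of a positive definite symmetric matrix is itself positive definite, hence invertible. Therefore $-(I^{-1})_{\calN,\calN}$ is invertible, so $pr_\calN|_{V_\calN}$ has invertible matrix representation and is the desired isomorphism.

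There is no serious obstacle here: the argument is purely linear algebra, once one carries out the identification of the coordinate matrix with $-I^{-1}$. The only subtlety worth highlighting is the sign bookkeeping coming from the convention $(E^*_v,E_w)=-\delta_{vw}$, which is what places the submatrix of $-I^{-1}$ (rather than $I^{-1}$) at the heart of the argument; the rest follows from the standard fact that definiteness is inherited by principal submatrices.
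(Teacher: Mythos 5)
Your proof is correct and takes essentially the same route as the paper's: the paper's one-line argument invokes the fact that negative definiteness guarantees that central (principal) minors are nondegenerate, and you have simply filled in the computation showing that the matrix of $pr_\calN|_{V_\calN}$ in the bases $\{E^*_n\}_{n\in\calN}$ and $\{E_m\}_{m\in\calN}$ is the $\calN\times\calN$ principal submatrix of $-I^{-1}$, which is positive definite since $-I$ is.
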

\begin{proof}
Follows from the negative definiteness of the intersection form of the plumbing, which guarantees that
any minor situated centrally on the diagonal is nondegenerate.
\end{proof}
Our goal is to prove that restricting the counting function to the subspace $V_\calN$, the non-node variables of $Z(\bt)$ and $Q(l')$ became non-visible, hence they can be eliminated. This fact will
provide a remarkable simplification in the periodic constant computation.
But, {\it before} any elimination-substitution,
we have first to decompose our series $Z(\bt)$
into $\sum_{h\in H}Z_h(\bt)[h]$
if we wish to preserve the information about all the $H$ invariants,
cf. the comment at the end of \ref{bek:LL'2}.

\begin{theorem}\labelpar{th:REST}
(a) The restriction of $\calL_h({\bf A}, \calT,l')$ to $V_\calN$
 depends only on those coordinates which are indexed by the nodes (that is, it
depends only on $pr_\calN(l')$ whenever $l'\in V_\calN$).

(b) The same is true for the counting function $Q_h$ associated with $Z_h(\bt)$ as well.
In other words, if we consider the restriction
$$Z_h(\bt_\calN):=Z_h(\bt)|_{t_v=1\ \mbox{\tiny{for all $v\not\in\calN$}}}$$
then for any $l'\in L_\calN$, the counting functions $\sum_{l''\not\geq l'} p_{l''}[l'']$
of  $Z_h(\bt)$ and $Z_h(\bt_\calN)$ are the same.

(c) Consider the chamber decomposition of $\calS_\calN$ by intersections of type
$\calC_\calN:=\calC\cap\calS_\calN$, where $\calC$ denotes a chamber (of $Z$)
such that $int(\calC\cap \calS)\not=\emptyset$, and
the intersection of $\calC$ with the relative interior of $\calS_\calN$ is also
non-empty.
Then  \begin{equation}\label{eq:REDPC}
{\mathrm{pc}}^\calC(Z_h(\bt))=
 {\mathrm{pc}}^{\calC_\calN}(Z_h(\bt_\calN)).
 \end{equation}
\end{theorem}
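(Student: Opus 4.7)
The plan is to handle (a) as a direct corollary of Lemma~\ref{lem:RES}, to establish (b) by exploiting the support of $Z(\bt)$, and to derive (c) from (b) by comparing two abstract quasipolynomials on a common open cone in $V_\calN$.

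For (a), Lemma~\ref{lem:RES} says that $pr_\calN|_{V_\calN}$ is an isomorphism, so any $l'\in V_\calN$ is reconstructed from $pr_\calN(l')$; hence so are the polytope $P^{(l')}=\bigcup_v P_v^{(l')}$ and the value $\calL_h({\bf A},\calT,l')$. For (b), I would first observe that $Z(\bt)=\prod_v(1-\bt^{E^*_v})^{\delta_v-2}$ is supported in $\Z_{\geq 0}\langle E^*_v\rangle_v\subset\calS$; since every $E^*_v$ has strictly positive $E_w$-coordinates (namely $-I^{-1}_{vw}>0$), any monomial $p_{l''}\bt^{l''}$ contributing to $Z_h(\bt)$ has $l''_w\geq 0$ for every $w$. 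For $l'\in L_\calN$ one has $l'_w=0$ whenever $w\notin\calN$, so the inequality $l''_w<l'_w$ cannot occur at a non-node vertex, and $l''\not\geq l'$ is equivalent to the existence of some $n\in\calN$ with $l''_n<l'_n$, a condition depending only on $pr_\calN(l'')$. Grouping coefficients by $y=pr_\calN(l'')$ identifies $\sum_{l''\not\geq l'}p_{l''}[l'']$ with the counting function of $Z_h(\bt_\calN)$ at $l'$, whose coefficients are precisely $\widetilde{p}_y:=\sum_{l'':\,pr_\calN(l'')=y,\,[l'']=h}p_{l''}$.

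For (c), by Proposition~\ref{prop:pc}(a) and Corollary~\ref{cor:Taylor}(b) both periodic constants are obtained as the values at $0$ of abstract quasipolynomials $\overline{Q}_h^\calC$ on $L\otimes\R$ and $\overline{\widetilde{Q}}^{\calC_\calN}$ on $L_\calN\otimes\R$, matching the respective counting functions on translates $l'_*+\calC$ and $l'_*+\calC_\calN$. The hypothesis $\mathrm{int}(\calC\cap\calS_\calN)\neq\emptyset$ allows choosing $l'_*$ in the relative interior of $\calC_\calN$ with $l'_*+\calC_\calN$ contained in both $\bigcap_k(b_k+\calC)$ and $\bigcap_k(b_k+\calC_\calN)$. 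For $l'\in L_\calN\cap(l'_*+\calC_\calN)$ the chain
$$\overline{Q}_h^\calC(l')=Q_h(l')=\widetilde{Q}(l')=\overline{\widetilde{Q}}^{\calC_\calN}(l')$$
follows from Corollary~\ref{cor:Taylor}(b), part (b) above, and the definition of $\overline{\widetilde{Q}}^{\calC_\calN}$. Hence the restriction of $\overline{Q}_h^\calC$ to $V_\calN\cong L_\calN\otimes\R$, which is itself a quasipolynomial in the $|\calN|$ node coordinates by Lemma~\ref{lem:RES}, agrees with $\overline{\widetilde{Q}}^{\calC_\calN}$ on the open cone $l'_*+\calC_\calN$; two quasipolynomials agreeing on an open cone coincide, and their common value at $l'=0$ yields (\ref{eq:REDPC}).

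The main obstacle lies in (c): one must verify that $\overline{Q}_h^\calC|_{V_\calN}$ and $\overline{\widetilde{Q}}^{\calC_\calN}$ genuinely share an open domain of agreement in $L_\calN\otimes\R$, which is why the intersection assumption $\mathrm{int}(\calC\cap\calS_\calN)\neq\emptyset$ is essential. A related technical point is that restricting a quasipolynomial from $L\otimes\R$ to the affine subspace $V_\calN$ produces a quasipolynomial in the node coordinates, possibly with enlarged rational period from the linear reparameterization; this does not affect the value at $0$.
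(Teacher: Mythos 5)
Your proof of part (a) is vacuous: since $pr_\calN|_{V_\calN}$ is injective by Lemma~\ref{lem:RES}, \emph{any} function on $V_\calN$ ``depends only on $pr_\calN(l')$'' in the sense you use the phrase, so the argument establishes nothing. What (a) actually needs, and what the paper proves, is the inclusion
\begin{equation*}
P_v^{(l'),\triangleleft}\subset\bigcup_{n\in\calN}P_n^{(l'),\triangleleft}\qquad\text{for every }v\notin\calN\text{ and }l'\in\calS_\calN,
\end{equation*}
so that the $|\cV|$-fold union of simplices collapses to the $|\calN|$-fold union of node simplices, each of which depends on a single node coordinate of $l'$. This inclusion is genuinely nontrivial for $l'\in\calS_\calN$ (there every $l'_v>0$, so $P_v^{(l'),\triangleleft}\neq\emptyset$ even at non-nodes), and it rests on the decomposition $E^*_v=\alpha E^*_{n(v)}+\sum_{w}\alpha_w E_w$ with $\alpha,\alpha_w>0$ (respectively its two-node analogue), combined with $l'\in V_\calN$ to get $l'_v=\alpha\,l'_{n(v)}$. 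Your argument for (b), using positivity of the support of $Z$, is correct and clean, but only for $l'\in L_\calN$, where all non-node coordinates vanish; it says nothing for $l'\in\calS_\calN\subset V_\calN$, where the non-node coordinates are strictly positive.

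This makes (c) collapse. You compare the two abstract quasipolynomials on $L_\calN\cap(l'_*+\calC_\calN)$, but this set is \emph{empty}: $\calC_\calN\subset\calS_\calN=\R_{\geq 0}\langle E^*_n\rangle_n$, and every $E^*_n$ has strictly positive coordinate at every non-node $v$, so any nonzero nonnegative combination of the $E^*_n$ has nonzero non-node coordinates and hence lies outside $L_\calN=\Z\langle E_n\rangle_n$. In other words $L_\calN\otimes\R$ and $V_\calN$ are transverse subspaces, and $l'_*+\calC_\calN$ (with $l'_*\neq 0$ in $\calS_\calN$) misses $L_\calN$ altogether; two quasipolynomials agreeing on an empty set need not coincide. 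The same transversality also breaks the first link $\overline{Q}_h^\calC(l')=Q_h(l')$ of your chain, since that requires $l'\in\bigcap_k(b_k+\calC)$, whereas nonzero elements of $L_\calN$ do not lie in the Lipman cone. What (c) really requires is the equality $Q_h(l')=Q'_h(pr_\calN(l'))$ for $l'$ ranging over a full-rank sublattice of $V_\calN$ inside $\calS_\calN$, i.e.\ the $V_\calN$-version of (b); that in turn comes from the polytope inclusions of (a) (together with the fact that all $b_k\in\calS_\calN$, so $l'-b_k$ stays in $V_\calN$). Without establishing those inclusions, your argument does not close.
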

The theorem applies as follows. Assume that we are interested in the computation of
$\mathrm{pc}^\calC_h(Z(\bt))$ for some chamber $\calC$
(e.g. when $\calC\subset \calS$, cf. Corollary \ref{cor:4.1}). Assume that
$\calC$ intersects the relative interior of $\calS_\calN$. Then,
the restriction to  $\calC\cap \calS_\calN$ of the quasipolynomial associated with
$\calC$ has two properties: it still preserves sufficiently information to
determine $\mathrm{pc}^\calC_h(Z(\bt))$
(via the periodic constant of the restriction, see (\ref{eq:REDPC})),
but it also has the advantage that
for these dilatation parameters $l'$ the  union $\cup_{v\in\cV} P^{(l'),\triangleleft}_v$
equals the union of essentially much less polytopes, namely $\cup_{n\in\calN} P^{(l'),\triangleleft}_v$.

For example, when we have only one node, one has to handle only one convex simplicial polytope
instead of a union of $|\cV|$ simplices.

\begin{proof} (a) We show that for any $l'\in  V_\calN$ one has the inclusions
\begin{equation}\label{eq:4.13}
P_v^{(l'),\triangleleft}\,\subset\,
\bigcup_{n\in \calN}P_n^{(l'),\triangleleft} \ \mbox{for any $v\not\in \calN$}.
\end{equation}
We  consider two cases. First we assume that $v$ is on  a leg (chain) connecting an
end $e(v)\in\cale$ with a node $n(v)$ (where $e(v)=v$ is also possible).
Then, clearly, (\ref{eq:4.13}) follows from
\begin{equation}\label{eq:4.13b}
P_v^{(l'),\triangleleft}\,\subset\,
P_{n(v)}^{(l'),\triangleleft} \ \ \mbox{\ \ \ for any $l'\in \calS_\calN$}.
\end{equation}
Let  $E^*_{uv}=(E^*_u)_v=-(E^*_u,E^*_v)$ be the $v$--cordinate of $E^*_u$. Note that
$E^*_{uv}=E^*_{vu}$.  Using the definition of the polytopes,
(\ref{eq:4.13b}) is equivalent with the implication (cf. \ref{bek:pol})
\begin{equation}\label{eq:4.14}
\big(\ \sum_{e\in\cale}x_eE^*_{ve} < l'_{v}\ \big)
\Longrightarrow
\big(\ \sum_{e\in\cale}x_eE^*_{n(v)e} < l'_{n(v)}\ \big) \ \
\mbox{\ \ for any $l'\in \calS_\calN$ and $x_e\geq 0$}.
\end{equation}
Let $\calW$ be the set of vertices
on this leg (including $e(v)$ but not $n(v)$). Then, one verifies that there exist
positive rational numbers $\alpha$ and $\{\alpha_w\}_{w\in \calW}$, such that
\begin{equation}\label{eq:4.15}
E^*_v=\alpha\, E^*_{n(v)}+\sum_{w\in\calW}\alpha_wE_w.
\end{equation}
The numbers $\alpha$ and $\{\alpha_w\}_{w\in \calW}$ can be determined from the linear
system obtained by  intersecting the identity (\ref{eq:4.15}) by $\{E_w\}_w$ and $E_{n(v)}$.
Intersecting (\ref{eq:4.15}) by $E^*_e$ ($e\in\cale$), we get that $E^*_{ve}=\alpha E^*_{n(v)e}$ for any
$e\not=e(v)$,  and $E^*_{v,e(v)}=\alpha E^*_{n(v),e(v)}+\alpha_{e(v)}$. Hence
\begin{equation}\label{eq:4.16}
 \sum_{e\in\cale}x_eE^*_{ve} =\alpha
 \sum_{e\in\cale}x_eE^*_{n(v)e}+ x_{e(v)}\alpha_{e(v)}.
\end{equation}
On the other hand, intersecting (\ref{eq:4.15}) with $E^*_n$, for $n\in\calN$, we get
$E^*_{vn}=\alpha E^*_{n(v)n}$. Since $l'$ is a linear combination of $E^*_{n}$'s,
we get that
\begin{equation}\label{eq:4.17}
-l'_v=(l',E^*_v)=\alpha (l',E^*_{n(v)})=-\alpha l'_{n(v)}.
\end{equation}
Since $x_{e(v)}\alpha_{e(v)}\geq 0$, (\ref{eq:4.16}) and (\ref{eq:4.17}) imply (\ref{eq:4.14}).
This ends the proof of this case.

Next, we assume that $v$ is on a chain  connecting two nodes $n(v)$ and $m(v)$.
Let $\calW$ be the set of vertices
on this bamboo (not including $n(v)$ and  $m(v)$).
Then we will show that
\begin{equation}\label{eq:4.13c}
P_v^{(l'),\triangleleft}\,\subset\,
P_{n(v)}^{(l'),\triangleleft}\cup P_{m(v)}^{(l'),\triangleleft} \ \ \mbox{for any $l'\in \calS_\calN$}.
\end{equation}
This follows  as above from the
existence of positive rational numbers $\alpha$, $\beta$ and $\{\alpha_w\}_{w\in \calW}$ with
\begin{equation}\label{eq:4.18}
E^*_v=\alpha\, E^*_{n(v)}+\beta\, E^*_{m(v)}+\sum_{w\in\calW}\alpha_wE_w.
\end{equation}

(b) follows from (a) and from the fact that
 all $b_k$  entries in the numerator of $Z(\bt)$  belong to $\calS_\calN$.

 (c) If ${\mathrm{pc}}^{\calC}(Z_h(\bt))$ is computed as $\widetilde{Q}_h(0)$ for some
 quasipolynomial $\widetilde{Q}_h$ defined on $\widetilde{L}\subset L$, then part (b) shows that
 ${\mathrm{pc}}^{\calC_\calN}(Z_h(\bt_\calN))$ can be  computed as
 $(\widetilde{Q}_h|_{\widetilde{L}\cap \calS_\calN})(0)$, which equals $\widetilde{Q}_h(0)$.
\end{proof}

\begin{example}
Consider the following graph $\Gamma$:
\begin{center}
\begin{picture}(150,80)(80,15)
\put(150,55){\circle*{3}}
\put(180,55){\circle*{3}}
\put(210,55){\circle*{3}}
\put(240,55){\circle*{3}}
\put(120,55){\circle*{3}}
\put(90,55){\circle*{3}}
\put(60,55){\circle*{3}}
\put(150,25){\circle*{3}}
\put(90,25){\circle*{3}}
\put(210,25){\circle*{3}}
\put(150,55){\line(-1,0){90}}
\put(150,55){\line(1,0){90}}
\put(150,55){\line(0,-1){30}}
\put(210,55){\line(0,-1){30}}
\put(90,55){\line(0,-1){30}}
\put(150,80){\makebox(0,0){$E_0$}}
\put(180,80){\makebox(0,0){$E_{02}$}}
\put(210,80){\makebox(0,0){$E_2$}}
\put(240,80){\makebox(0,0){$E_{21}$}}
\put(120,80){\makebox(0,0){$E_{01}$}}
\put(90,80){\makebox(0,0){$E_1$}}
\put(60,80){\makebox(0,0){$E_{11}$}}
\put(150,10){\makebox(0,0){$E_{03}$}}
\put(90,10){\makebox(0,0){$E_{12}$}}
\put(210,10){\makebox(0,0){$E_{22}$}}
\put(150,65){\makebox(0,0){$-1$}}
\put(180,45){\makebox(0,0){$-13$}}
\put(210,65){\makebox(0,0){$-1$}}
\put(255,55){\makebox(0,0){$-2$}}
\put(220,20){\makebox(0,0){$-3$}}
\put(160,20){\makebox(0,0){$-2$}}
\put(100,20){\makebox(0,0){$-3$}}
\put(60,45){\makebox(0,0){$-2$}}
\put(90,65){\makebox(0,0){$-1$}}
\put(120,45){\makebox(0,0){$-9$}}
\end{picture}
\end{center}
\vspace{0.3cm}
By the Theorem \ref{th:REST} we are interested only in those polytopes $P_v\subset \R^5$
which are associated to the nodes $E_1$, $E_2$ and $E_0$. Let $l\in \calS_{\calN}$, i.e.
$l=\lambda_1 E_1^*+\lambda_2 E_2^*+\lambda_0 E_0^*$.
Then one can  verify that  $\calS_{\calN}$ is divided by  the plane
$\lambda_1= (13/9)\lambda_2$. Hence, in general   $\calS_{\calN}$
too can be divided into several chambers.
[On the other hand, if the graph has at most two nodes this does not happen.]

\end{example}

\section{The one--node case, star--shaped plumbing graphs}\labelpar{s:seifert}

\subsection{Seifert invariants and other notations}\labelpar{ss:seiferjel}
Assume that the graph is star--shaped with $d$ legs.
Each leg is a chain  with normalized  Seifert  invariant $(\alpha_i,\omega_i)$,
where $0<\omega_i <\alpha_i$, gcd$(\alpha_i,\omega_i)=1$.
We also use $\omega_i'$ satisfying $\omega_i\omega_i'\equiv 1$ (mod $\alpha_i$), $0< \omega_i'<\alpha_i$.

If we consider the
Hirzebruch/negative continued fraction expansion, cf. (\ref{eq:HCF})
$$ \alpha_i/\omega_i=[b_{i1},\ldots, b_{i\nu_i}]=
b_{i1}-1/(b_{i2}-1/(\cdots -1/b_{i\nu_i})\cdots )\ \  \ \ (b_{ij}\geq 2),$$
then the $i^{\mathrm{th}}$ leg has $\nu_i$ vertices, say $v_{i1},\ldots, v_{i\nu_i}$,
 with decorations $-b_{i1},\ldots, -b_{i\nu_i}$, where
 $v_{i1}$ is connected by the central vertex. The corresponding base elements in $L$ are
$\{E_{ij}\}_{j=1}^{\nu_i}$.
Let  $b$ be the decoration of the  central vertex; this vertex also defines $E_0\in L$.
The plumbed 3--manifold $M$ associated with such a star--shaped graph has a Seifert structure. It is
a rational homology sphere if and only if  the central vertex has genus zero;
this fact will be assumed in the sequel.

The classes
in $H=L'/L$ of the dual base elements are denoted by
$g_{ij}=[E^*_{ij}]$ and  $g_0=[E^*_0]$. For simplicity we also write
$E_i:=E_{i\nu_i}$ and $g_i:=g_{i\nu_i}$. A possible presentation of  $H$ is
\begin{equation}\label{eq:sei1}
H={\mathrm{ab}}\langle \, g_0,g_1,\ldots, g_d\,|\, -b\cdot g_0=\sum_{i=1}^d \omega_i\cdot g_i;
\, g_0=\alpha_i\cdot g_i\ (1\leq i\leq d)\rangle,
\end{equation}
cf. \cite{neumann.abel} (or use $\sum_k I_{ik}g_k$ repeatedly, see also  (\ref{eq:DET})).
The orbifold Euler number of $M$ is defined as $e=b+\sum_i\omega_i/\alpha_i$. The negative definiteness of
the intersection form implies $e<0$. We write $\alpha:=\mathrm{lcm}(\alpha_1,\ldots,\alpha_d)$,
$\frd:=|H|$ and
$\fro$ for the order of $g_0$ in $H$. One has (see e.g. \cite{neumann.abel})
\begin{equation}\label{eq:sei2}
\frd=\alpha_1\cdots\alpha_d|e|, \ \ \ \ \fro=\alpha|e|.
\end{equation}
Each leg has similar invariants as the graph of a lens space, cf. Example \ref{ex:lens}, and we can
introduce similar notation. For example, the determinant of the  $i^{\mathrm{th}}$ leg is $\alpha_i$.
We write $n^i_{j_1j_2}$ for  the determinant of the sub-chain  of the   $i^{\mathrm{th}}$ leg
connecting the vertices $v_{ij_1}$ and $v_{ij_2}$ (including these vertices too). Then, using the correspondence between intersection
pairing of the dual base elements and the determinants of the subgraphs, cf. (\ref{eq:DETsgr})
or \cite[11.1]{NOSZ}, one has
\begin{equation}\label{eq:DET}
\begin{array}{ll}\vspace{2mm}
(a) \ \ (E^*_0,E^*_{ij}- n^i_{j+1,\nu_i}E^*_{i\nu_i})=0 \ \ & (b) \ \
g_{ij}=n^i_{j+1,\nu_i}g_{i\nu_i}\ \   (1\leq i\leq d,  \ 1\leq j\leq \nu_i)\\
(c) \ \ (E^*_i,E^*_0)=\frac{1}{\alpha_ie} \ \ & (d) \ \
(E^*_{0},E^*_{0})=\frac{1}{e} \ .
 \end{array}
\end{equation}
Part (b) explains why we do not need to insert the generators $g_{ij}$ ($j<\nu_i$) in (\ref{eq:sei1}).

For any $l'\in L'$ we set $\tc(l'):=-(E^*_0,l')$, the $E_0$-coefficient of $l'$. Furthermore,
if $l'=c_0E^*_0+\sum_{i,j} c_{ij}E^*_{ij}\in L'$, then we define its {\em reduced transform }
 by $$l'_{red}:= c_0E^*_0+\sum_{i,j} c_{ij} \cdot n^i_{j+1,\nu_i}E^*_{i}.$$
By (\ref{eq:DET}) we get
$[l']=[l'_{red}]$ in $H$, $\tc(l')=\tc(l'_{red})$, and if
 $l'_{red}=\sum_{i=0}^dc_iE^*_i$, then $\tc(l'_{red})$ is
\begin{equation}\label{eq:ntilde}
\tc:=\frac{1}{|e|}\cdot \big(\,c_0+
\sum_{i=1}^d\frac{c_i}{\alpha_i}\,\big).
\end{equation}
If $h\in H$, and $l'_h\in L'$ is any of its lifting (that is, $[l'_h]=h$),
then $l'_{h,red}$ is also a lifting of the same $h$ with $\tc(l'_h)=\tc(l'_{h,red})$.
In general, $\tc=\tc(l'_h)$ depends on the lifting, nevertheless replacing $l'_h$ by $l'_h\pm E_0$ we modify
$\tc$ by $\pm 1$, hence we can always achieve
$\tc\in [0,1)$,  where it is  determined uniquely by $h$.
 For example, since $r_h\in \square$, its $E_0$-coefficient $\tc(r_h)$ is in
$[0,1)$.

  Finally, we consider
\begin{equation}\label{eq:discr}
\gamma:=\frac{1}{|e|}\cdot \big( d-2-\sum_{i=1}^d \frac{1}{\alpha_i}\big).
\end{equation}
It has several `names'.
 Since the canonical class is given by $K=-\sum_vE_v+\sum_{v}(\delta_v-2)E^*_v$,
by (\ref{eq:DET}) we get that the $E_0$ coefficient of $-K$ is $(K,E^*_0)=\gamma+1$. The number $-\gamma$ is
sometimes called the `log discrepancy' of $E_0$, $\gamma$ the `exponent' of the weighted
homogeneous germ $(X,o)$, and $\fro\gamma$ is the Goto--Watanabe $a$--invariant of the universal abelian
cover of $(X,o)$, see  \cite[(3.1.4)]{G-W} and \cite[(3.6.13)]{c-m}; while in
\cite{neumann.abel} $e\gamma$ appears as an orbifold Euler characteristic.

\subsection{The function Z}\labelpar{ss:Z}
By the reduction Theorem \ref{eq:REDPC}, for the periodic constant computation, we can
reduce ourself to the variable of the single node, it will be denoted by $t$.

First we analyze the equivariant rational function associated with  the denominator of $Z^e$
$$Z^{/H}(t)=\prod_{i=1}^d\,\big(1-t^{-(E^*_{i}, E^*_0)}[g_i]\big)^{-1}=
\sum_{x_1,\ldots , x_d\geq 0}\, t^{\,\sum_i
x_i/(\alpha_i|e|)}\ \big[\,\sum _ix_ig_i\,\big]\in \Z[[t^{1/\fro}]][H].
$$
 The right hand side of the above expression
can be transformed  as follows (cf. \cite[\S 3]{NN2}).
If we fix a lift $\sum_{i=0}^dc_iE_i^*$ of $h$
as above, then using the presentation (\ref{eq:sei1}) one gets that
$\sum_{i=1}^dx_ig_i$ equals $h$ if and only if there exist
$\ell,\ell_1,\ldots, \ell_d\in\Z$ such that
$$\begin{array}{lrll}
(a) \ & -c_0&= \ \ell_1+\cdots +\ell_d-\ell b &\\
(b) \ & x_i-c_i&= -\omega_i \ell-\alpha_i\ell_i & \ (i=1,\ldots, d).
\end{array}$$
Since $x_i\geq 0$, from (b) we get
$\tilde{\ell}_i:=\big\lfloor \frac{c_i-\omega_i\ell}{\alpha_i}\big\rfloor -\ell_i
\geq 0$.
Moreover, if we set for $\bc=(c_0,c_1,\ldots, c_d)$
\begin{equation}\label{eq:N}
N_\bc(\ell):=1+c_0-\ell b +\sum_{i=1}^d\Big\lfloor \frac{c_i-\omega_i\ell}{\alpha_i}\Big\rfloor,
\end{equation}
then the number of realizations of $h=\sum_ic_ig_i$ in the form $\sum_ix_ig_i$ is given by the number of
integers $(\tilde{\ell}_1,\ldots,\tilde{\ell}_d)$ satisfying
$\tilde{\ell}_i\geq 0$ and $\sum_i\tilde{\ell}_i=N_\bc(\ell)-1$.  This is
$\binom{N_\bc(\ell)+d-2}{d-1}$. Moreover, the non-negative
integer $\sum_ix_i/(\alpha_i|e|)$ equals $\ell+\tc$.
Therefore,
\begin{equation}\label{eq:ZH}
Z_h^{/H}(t)=\sum_{\ell\geq -\tc}\ \binom{N_\bc(\ell)+d-2}{d-1}\
 t^{\ell+\tc}.
\end{equation}
This expression is independent of the choice of $\bc=\{c_i\}_{i=0}^d$.
Similarly, for any function $\phi$, the expression
$\sum_{\ell\geq -\tc}\phi(N_\bc(\ell))t^{\ell+\tc}$ is independent of the choice of $\bc$, it depends only
on $h=\sum_ic_ig_i$.

Furthermore, one checks that $N_\bc(\ell)\leq 1+(\ell+\tc)|e|$, hence if $\ell+\tc<0$ then $N_\bc(\ell)\leq 0$,
therefore $\binom{N_\bc(\ell)+d-2}{d-1}=0$ as well. Hence, in (\ref{eq:ZH})  the inequality $\ell+\tc\geq 0$
below the sum, in fact, is not restrictive.

Next, we consider the numerator $(1-[g_0]t^{1/|e|})^{d-2}$ of $Z^e(t)$.
A similar computation as above done for $Z^e(t)$ (see \cite{neumann.abel} and \cite[\S 3]{NN2}),
or by multiplying (\ref{eq:ZH}) by the numerator and using $\sum_{k=0}^{d-2}(-1)^k\binom{d-2}{k}
\binom{N-k+d-2}{d-1}=\binom{N}{1}$,  gives
\begin{equation}\label{eq:Zt}
Z_h(t)
=\sum_{\ell\geq -\tc}\
\max\{0, N_\bc(\ell)\} \ t^{\ell+\tc}.
\end{equation}
In order to compute the periodic constant of $Z_h(t)$ we decompose $Z_h(t)$ into its
 `polynomial and negative degree parts', cf. \ref{PC}. Namely, we write $Z_h(t)=Z^{+}_h(t)+Z^{-}_h(t)$, where
\begin{equation}\label{eq:Sp}\begin{array}{l}
Z^{+}_h(t)=\sum_{\ell\geq -\tc}\ \max\big\{0, -N_\bc(\ell)\big\} \ t^{\ell+\tc} \ \
\mbox{(finite sum with positive exponents)}
\\ \  \ \\
Z^{-}_h(t)=\sum_{\ell\geq -\tc}\ N_\bc(\ell) \ t^{\ell+\tc}.
\end{array}
\end{equation} In  $Z^{-}_h$ it is convenient to
fix a choice with $\tc\in[0,1)$, hence  the summation is  over $\ell\geq 0$. Then
a computation (left to the reader) shows that it is a rational function of negative degree
\begin{equation}\label{eq:Sp2}
Z^{-}_h(t)=\Big(\frac{1-e\tc}{1-t}-\frac{e\cdot t}{(1-t)^2}
-\sum_{i=1}^d\sum_{r_i=0}^{\alpha_i-1}\Big\{\frac{c_i-\omega_ir_i}{\alpha_i}\Big\}\, t^{r_i}\cdot
\frac{1}{1-t^{\alpha_i}}\Big)\cdot t^{\tc}.
\end{equation}
[This expression can be compared with the Laurent  expansion of $Z_h$ at $t=1$ which
was already considered
in the literature. Dolgachev, Pinkham, Neumann and Wagreich \cite{Do,pinkham,neumann.abel,wa}
determine the first two terms (the pole part), while  \cite{NN2,NOSZ} the third terms as well.
Nevertheless the above $Z_h^{+}+Z_h^{-}$ decomposition does not coincide
with the `pole+regular part' decomposition of the Laurent expansion terms, and focuses on
 different aspects.]

Since the degree of $Z^{-}_h$ is negative (or by a direct computation)
 $\mathrm{pc}(Z^{-}_h)=0$, cf. \ref{PC}.

On the other hand, since $e<0$, in $Z^{+}_h(t)$ the sum is finite.
(The degree of $Z^{+}_0$ is $\leq \gamma$,
see e.g. \cite{NO2}. Since
$N_{{\bf c}(r_{h,red})}(\ell) \geq N_0(\ell)$,
  the degree of $Z_h^{+}$ is $\leq \gamma+\tc(r_h)$ too).
By  \ref{PC},
\begin{equation}\label{eq:PCpol}
\mathrm{pc}(Z_h)=Z_h^{+}(1)=\sum_{\ell\geq -\tc}\ \max\big\{0, -N_\bc(\ell)\big\}
\end{equation}
for {\it any} lifting $\bc$ of $h=\sum_ic_ig_i$.
In this sum the bound $\ell\geq -\tc$ is really restrictive.

We consider the non-equivariant version, the projection of $Z^e\in \Z[[t^{1/\fro}]][H]$
into $\Z[[t^{1/\fro}]]$ too
$$Z_{ne}(t)=\sum_hZ_h(t)=\frac{(1-t^{1/|e|})^{d-2}}{
\prod_{i=1}^d\,(1-t^{1/(|e|\alpha_i)})}\in \Z[[t^{1/\fro}]].
$$
We can get its $Z^{+}_{ne}+Z^{-}_{ne}$ decomposition either by summation of
$Z^{+}_{h}$ and $Z^{-}_{h}$, or as follows. Write
\begin{equation}\label{eq:NE}
Z_{ne}(t)=\frac{1}{(1-t^{1/|e|})^{2}} \prod_{i=1}^d\, \frac{1-t^{1/|e|}}{
1-t^{1/(|e|\alpha_i)}}=\frac{1}{(1-t^{1/|e|})^{2}}
\sum_{0\leq x_i<\alpha_i\atop 0\leq i\leq d} t^{\frac{1}{|e|}\cdot S(x)},
\end{equation}
where $S(x):=\sum_i \frac{x_i}{\alpha_i}$. Then its decomposition into
$Z_{ne}^{+}(t)+Z_{ne}^{-}(t)$ is
\begin{equation}\label{eq:NE1}
Z_{ne}^{-}(t)=
\sum_{0\leq x_i<\alpha_i\atop 0\leq i\leq d} t^{\frac{1}{|e|}\cdot \{S(x)\}}\cdot
\Big(\frac{1}{(1-t^{1/|e|})^{2}}-\frac{\lfloor S(x)\rfloor}{(1-t^{1/|e|})}\Big)
\end{equation}
\begin{equation}\label{eq:NE2}
Z_{ne}^{+}(t)=
\sum_{0\leq x_i<\alpha_i\atop 0\leq i\leq d} t^{\frac{1}{|e|}\cdot \{S(x)\}}\cdot
\frac{t^{\frac{1}{|e|}\cdot \lfloor S(x)\rfloor}-\lfloor S(x)\rfloor
t^{\frac{1}{|e|}}+\lfloor S(x)\rfloor-1}{(1-t^{1/|e|})^{2}}.
\end{equation}
After dividing in $Z^+_{ne}(t)$ (or by L'Hospital rule), we get 
\begin{equation}\label{eq:NE3}
\mathrm{pc}(Z_{ne})=Z^+_{ne}(1)=\frac{1}{2}\cdot
\sum_{0\leq x_i<\alpha_i\atop 0\leq i\leq d} \lfloor S(x)\rfloor\cdot \lfloor S(x)-1\rfloor.
\end{equation}

\subsection{Analytic interpretations}\labelpar{ss:analy} \

\vspace{2mm}

Rational homology sphere negative definite
Seifert 3--manifolds  can be realized analytically as links of  weighted homogeneous singularities,
or by  equisingular deformations of weighted homogeneous singularities provided by splice
 quotient equations \cite{neumann.abel,nw-CIuac}.

Consider the smooth germ at the origin of  $\C^d$ with coordinate ring $\C\{z\}=\C\{z_1,\ldots, z_d\}$, where
$z_i$ corresponds to the $i^{\mathrm{th}}$ end. Then $H$ acts on it by the diagonal action
$h*z_i=\theta(g_i)(h)z_i$. Similarly, we can introduce a multidegree $deg(z_i)=E^*_i\in L'$,
hence the Poincar\'e series of $\C\{z\}$ associated with this multidegree is $\prod_i
(1-\bt^{E^*_i})^{-1}$. Moreover, considering the action of $H$ on it,
$\widetilde{Z}(\bt)=\prod_i
(1-[g_i]\bt^{E^*_i})^{-1}$ is the equivariant Poincar\'e series of $\C^d$, the invariant part
$\widetilde{Z}_0(\bt)$ being the Poincar\'e series of the corresponding quotient space $\C^d/H$.

In $\C^d$ one can consider the `splice equations' as follows. Consider a matrix $\{\lambda_{ij}\}_{ij}$
of full rank and of size $d\times(d-2)$. Then the equations
$\sum_{i=1}^d\lambda_{ij}z_i^{\alpha_i}=0$, for $j=1,\ldots, d-2$, determine in $\C^d$ an isolated
complete intersection singularity $(Y,o)$ on which the group $H$ acts as well.
Then $(X,o)=(Y,o)/H$ is a normal surface singularity with the topological type of the Seifert manifold
we started with. The equivariant Poincar\'e series of $(Y,o)$ is $Z(\bt)$ \cite{neumann.abel}.
For $(X,o)$ \cite{BN} proves the identity $P(\bt)=Z(\bt)$ mentioned in subsection \ref{FM},
hence $Z(\bt)$ is also the Poincar\'e series of the equivariant divisorial filtration associated with all
the vertices.

Theorem~\ref{th:REST} reduces the filtration to the $\Z$-filtration: the
divisorial filtration associated with the central vertex.  In the weighted homogeneous case this
filtration is also induces by the weighted homogeneous equations. Then,
$Z^{/H}(t)$ is the Poincar\'e series of $\C^d/H$,
$Z(t)$  is the equivariant Poincar\'e series of $Y$, hence $Z_0(t)$ is the Poincar\'e series of $X$,
cf. \cite{Do,neumann.abel,pinkham}.

By \ref{FM}, $\{\mathrm{pc}(Z_h)\}_{h\in H}$  are the equivariant
geometric genera of the universal abelian cover $Y$ of $X$, hence
$\mathrm{pc}(Z_0)$ and $\mathrm{pc}(Z_{ne})$ are the geometric genera
of $X$ and $Y$ respectively, cf. \cite{coho3}.

\subsection{Seiberg--Witten theoretical interpretations}\labelpar{ss:onenodeSW} \

\vspace{2mm}

Fix $h\in H$. Then, for any lifting $\sum_ic_ig_i$ of $h$, Corollary~\ref{cor:4.1} and equation
\ref{eq:PCpol} give
\begin{equation}\label{eq:pcrh}
  \mathrm{pc}(Z_h)=\sum_{\ell\geq -\tc}\ \max\big\{0, -N_\bc(\ell)\big\}=
-\frsw_{-h*\sigma_{can}}(M)
-\frac{(K+2r_h)^2+|\cV|}{8}.\end{equation}
Recall  that
$\sum_h\frsw_{-h*\sigma_{can}}(M)$ is the {\it Casson--Walker invariant}
$\lambda(M)$. Hence, for the non-equvariant version   we get
\begin{equation}\label{eq:pcrhne}
  \mathrm{pc}(Z_{ne})=\frac{1}{2}\cdot
\sum_{0\leq x_i<\alpha_i\atop 0\leq i\leq d} \lfloor S(x)\rfloor\cdot
\lfloor S(x)-1\rfloor=
-\lambda(M)-\frd\cdot\frac{K^2+|\cV|}{8}+\sum_h\chi(r_h).\end{equation}
For explicit formulae of $\lambda(M)$ and $K^2+|\cV|$ in terms of
Seifert invariants see e.g. \cite[2.6]{NN2}).

\begin{remark} (\ref{eq:pcrh})
can be compared with a known formulae of the Seiberg--Witten
invariants involving the representative $s_h$. This will also lead us to an
expression for $\chi(r_h)-\chi(r_s)$ in terms of $N_{{\bf c}}(\ell)$.
Let $\bc(s_h)=(c_0,\ldots,c_d)$ be the coefficients of $s_{h,red}$,
cf. \ref{ss:seiferjel}. The set of all reduced coefficients $\bc(s_h)$,
when $h$ runs in $H$, is characterized
in \cite[11.5]{NOSZ} by the inequalities
\begin{equation}\label{eq:shcar}
\left\{ \begin{array}{l}
c_0\geq 0, \ \ \alpha_i>c_i\geq 0 \ \ (1\leq i\leq d) \\
N_\bc(\ell)\leq 0 \ \ \mbox{for any $\ell<0$}.\end{array}\right.
\end{equation}
Moreover, for this special lifting $\bc(s_h)$ of $h$, in \cite[\S 11]{NOSZ} is proved
\begin{equation}\label{eq:pcsh}
\sum_{\ell\geq 0}\ \max\big\{0, -N_{\bc(s_h)}(\ell)\big\}=
-\frsw_{-h*\sigma_{can}}(M)
-\frac{(K+2s_h)^2+|\cV|}{8}.\end{equation}
Using the discussion from the end of \ref{ss:seiferjel}, this can be rewritten for {\it any} lifting
$\bc$ of $h$ as
\begin{equation}\label{eq:pcsh2}
\sum_{\ell\geq -\tc+\lfloor \tc(s_h)\rfloor}\ \ \max\big\{0, -N_{\bc}(\ell)\big\}=
-\frsw_{-h*\sigma_{can}}(M)
-\frac{(K+2s_h)^2+|\cV|}{8}.\end{equation}
This compared with (\ref{eq:pcrh}) gives for any lifting
$\bc$ of $h$
\begin{equation}\label{eq:pcsh3}
\sum_{-\tc+\lfloor \tc(s_h)\rfloor  >\ell\geq -\tc}\ \ \max\big\{0, -N_{\bc}(\ell)\big\}=
\chi(r_h)-\chi(s_h).\end{equation}
\end{remark}

\begin{example}\labelpar{ex:sh}
The sum in (\ref{eq:pcsh3}), in general, can be non-zero. This happens,
for example,  in the case of the link of a rational singularity whose universal abelian cover is not rational. Here is a concrete example, cf. \cite[4.5.4]{trieste}:
take the Seifert manifold with $b=-2$ and
three legs, all of them with Seifert invariants $(\alpha_i,\omega_i)=(3,1)$.
For  $ h=\sum_{i=1}^3g_i$ one has $s_h=\sum_{i=1}^3E_i^*$, the $E_0$-coefficient of $s_h$ is 1,
$r_h=s_h-E_0$, and $\chi(s_h)=0$, $\chi(r_h)=1$.
\end{example}

\subsection{Ehrhart theoretical interpretations}\labelpar{ss:Ehr}\

\vspace{2mm}

We fix $h\in H$ as above and {\it we assume that $\tc\in[0,1)$}. Note that $Z_h(t)$ has the form
$t^{\tc}\sum_{\ell\geq 0}p_\ell t^\ell$; here the exponents $\{\tc+\ell\}_{\ell\geq 0}$
are the possible $E_0$--coordinates of the elements $(r_h+L)\cap \calS'$.

Let us compute the counting  function for $Z_h$. If $S(t)=\sum_r p_rt^r$ is a series,
we write $Q(S)(r')=\sum_{r<r'}p_r$, for $r'\in\Q_{\geq 0}$.
\begin{lemma}\label{lemma:P} For any $n\in\N_{\geq 0}$ one has the following facts.

(a) \ $Q(Z_h)(n)=Q(Z_h)(n+\tc)$.

(b) \ $Q(Z_h^{+})(n)$ is a step function (hence piecewise polynomial)
with $$Q(Z_h^{+})(n)={\mathrm{pc}}(Z_h) \ \ \ \mbox{for $n> \mathrm{deg}(Z_h^{+})$}.$$

(c) \ $Q(Z_h^{-})(n)$ is a quasipolynomial:
\begin{equation}\label{eq:PSp2}
Q(Z^{-}_h)(n)=(1-e\tc)n-e\cdot \frac{n(n-1)}{2}
-\sum_{i=1}^d\sum_{r_i=0}^{\alpha_i-1}\Big\{\frac{c_i-\omega_ir_i}{\alpha_i}\Big\}\,
\Big\lceil \frac{n-r_i}{\alpha_i}\Big\rceil\end{equation}
\begin{equation*}
=-\frac{en^2}{2}+\frac{en}{2}(\gamma+1-2\tc)
-\sum_{i=1}^d\sum_{r_i=0}^{\alpha_i-1}\Big\{\frac{c_i-\omega_ir_i}{\alpha_i}\Big\}\,
\Big(\Big\{\frac{r_i-n}{\alpha_i}\Big\}-\frac{r_i}{\alpha_i}\Big).
\end{equation*}
\end{lemma}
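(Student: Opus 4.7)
The plan is to unpack $Q$ linearly on the decomposition $Z_h = Z_h^+ + Z_h^-$ and verify each property from the explicit formulas.

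First, part (a) I would dispatch by a direct bookkeeping on the exponent set of $Z_h$. Since $\tc \in [0,1)$, the exponents appearing in $Z_h$ are precisely $\tc + \ell$ with $\ell \in \Z_{\geq 0}$. For $n \in \Z_{\geq 0}$, the condition $\tc + \ell < n$ is equivalent to $\ell \leq n-1$ (whether $\tc = 0$ or $\tc \in (0,1)$), and the same is true for $\tc + \ell < n + \tc$. Hence both partial sums $Q(Z_h)(n)$ and $Q(Z_h)(n+\tc)$ range over the same index set $\{0,1,\ldots,n-1\}$. Since $Z_h^\pm$ have the same exponent set, the identity also holds separately for $Z_h^+$ and $Z_h^-$.

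For part (b), the decomposition in \eqref{eq:Sp} shows that $Z_h^+(t)$ is a genuine polynomial, supported on finitely many exponents (all $\leq \gamma+\tc(r_h)$ as noted in the discussion preceding \eqref{eq:PCpol}). Thus $n \mapsto Q(Z_h^+)(n)$ is a step function, and for any $n > \deg(Z_h^+)$ every term of $Z_h^+$ has been swept up, giving $Q(Z_h^+)(n) = Z_h^+(1) = \mathrm{pc}(Z_h)$ by \eqref{eq:PCpol}. This is the only nontrivial content beyond unwinding definitions.

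The heart of the argument is part (c). I will apply $Q$ termwise to the closed form \eqref{eq:Sp2}, using part (a) to evaluate at $n+\tc$ instead of $n$. For each elementary building block the counting function is immediate: $Q(t^\tc/(1-t))(n+\tc) = n$, $Q(t^{\tc+1}/(1-t)^2)(n+\tc) = n(n-1)/2$, and $Q(t^{\tc+r_i}/(1-t^{\alpha_i}))(n+\tc) = \lceil (n-r_i)/\alpha_i \rceil$, because these series are geometric with the indicated exponent lattices. Assembling them gives the first displayed expression for $Q(Z_h^-)(n)$ in \eqref{eq:PSp2}. The main obstacle, though purely bookkeeping, is the passage to the second form: I would substitute the identity $\lceil (n-r_i)/\alpha_i \rceil = (n-r_i)/\alpha_i + \{(r_i-n)/\alpha_i\}$ and split the sum into its linear-in-$n$ part and the fractional contribution. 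Using the fact that, since $\gcd(\omega_i,\alpha_i) = 1$, the map $r_i \mapsto (c_i - \omega_i r_i) \bmod \alpha_i$ is a bijection of $\{0,1,\ldots,\alpha_i-1\}$, one gets
\[
\sum_{r_i=0}^{\alpha_i-1}\Big\{\frac{c_i-\omega_i r_i}{\alpha_i}\Big\} = \frac{\alpha_i-1}{2}.
\]
Plugging this into the linear part yields $-\frac{n}{2}(d - \sum_i 1/\alpha_i)$; combining this with $(1-e\tc)n + en/2$ and using the definition \eqref{eq:discr} of $\gamma$, i.e.\ $\sum_i 1/\alpha_i = d-2+e\gamma$, the linear coefficient collapses exactly to $\frac{e}{2}(\gamma+1-2\tc)$. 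The remaining fractional contribution is precisely the summand $-\sum_i\sum_{r_i}\{(c_i-\omega_ir_i)/\alpha_i\}(\{(r_i-n)/\alpha_i\} - r_i/\alpha_i)$ in the second displayed formula, completing the identity. The only place where one must be careful is the boundary case $\tc = 0$ in part (a) and the integrality subtlety in $\lceil \cdot \rceil = \cdot + \{-\cdot\}$, but in both cases the formula is valid on the nose.
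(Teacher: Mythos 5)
Your proof is correct, and since the paper states Lemma \ref{lemma:P} without supplying an argument (it is one of the verifications the paper implicitly leaves to the reader, much like the derivation of (\ref{eq:Sp2}) just above it), your termwise computation of $Q$ on the closed form (\ref{eq:Sp2}) is essentially the only sensible way to fill the gap. All the small checks work: the bijectivity of $r_i\mapsto c_i-\omega_i r_i\ (\mathrm{mod}\ \alpha_i)$ using $\gcd(\omega_i,\alpha_i)=1$ and integrality of $c_i$, the identity $\lceil x\rceil=x+\{-x\}$, the observation that $\lceil(n-r_i)/\alpha_i\rceil$ is automatically nonnegative since $0\leq r_i<\alpha_i$ and $n\geq 0$, and the collapse of the linear coefficient to $\tfrac{e}{2}(\gamma+1-2\tc)$ via $e\gamma=2-d+\sum_i 1/\alpha_i$ (which is (\ref{eq:discr}) rewritten with $|e|=-e$). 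Your remark that part (a) transfers to $Z_h^\pm$ individually because they share the exponent set $\{\ell+\tc:\ell\in\Z_{\geq 0}\}$ is the right justification for evaluating $Q$ at $n+\tc$ in part (c).
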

In particular, if $n=m\alpha$  for  $m\in \Z$, and $n>\mathrm{deg}(Z^{+}_h)$,
then the double sum is zero, hence
\begin{equation}\label{eq:PP}
Q(Z_h)(n)=-\frac{en^2}{2}+\frac{en}{2}(\gamma+1-2\tc)+\mathrm{pc}(Z_h).
\end{equation}
This is compatible with the expression provided by Theorem \ref{th:JEMS} and the Reduction theorem
\ref{th:REST}. Indeed, let us fix any chamber $\calC$ such that $int(\calC\cap \calS')\not=\emptyset$, and
$\calC$ contains the ray $\calR=\R_{\geq 0}\cdot E^*_0$. Since the numerator of $f(\bt)$ is
$(1-\bt^{E^*_0})^{d-2}$, all the $b_k$--vectors belong to $\calR$. In particular, $\cap_k(b_k+\calC)$ intersects
$\calR$ along a semi-line $\calR^{\geq c}=\R_{\geq const}\cdot E^*_0$ of $\calR$. Since $Q_h(l')$
is quasipolynomial on $\cap_k(b_k+\calC)$, cf. (\ref{ex:qbar2}), and certain restriction of it is determined
by  (\ref{eq:SUM}) whose right hand side is a quasipolynomial too, we obtain that the identity
(\ref{eq:SUM}) is valid on $\calR^{\geq c}$ as well.

Recall that for any $h\in H$ and $l'\in L'$ we have a unique $q_{l',h}\in\square $ with
$l'+q_{l',h}\in r_h+L$. Hence we get
\begin{equation}\label{eq:QPpol}
Q_h(l')=
-\frsw_{-h*\sigma_{can}}(M)-\frac{(K+2l'+2q_{l',h})^2+|\cV|}{8}\ \ \ \ (l'\in \calR^{\geq c}).
\end{equation}
The term $q_{l',h}$ is responsable for the  non-polynomial behavior. Nevertheless, if we assume that
 $l'=m\fro E^*_0\in\calR^{\geq c}\cap L$, $m\in \Z$, then $q_{l',h}=r_h$, hence by  (\ref{eq:pcrh})
 \begin{equation}\label{eq:PPP}
Q_h(l')=-\frac{(l',l'+K+2r_h)}{2}+\mathrm{pc}(Z_h).\end{equation}
By the Reduction theorem \ref{th:REST} $Q_h(l')$ from (\ref{eq:PPP}) depends only on
the $E_0$-coefficient of $l'=m\fro E_0^*$, which  is exactly $m\alpha$. One sees that in fact
(\ref{eq:PPP}) agrees with (\ref{eq:PP}) if we set $n=m\alpha$.

\vspace{2mm}

The non-equivariant version can be obtained by summation of (\ref{eq:PP}). For this we need
$\sum_h\tc(r_h)$. Consider the group homomorphism $\varphi:H\to \Q/\Z$ given by
$h\mapsto [\tc(r_h)]$, or $ [E^*_v]\mapsto [-(E^*_0,E^*_v)]$. Its image is generated by the
classes of $1/(\alpha_i|e|)$, hence its order is $\fro$. Hence, $\tc(r_h)$ vanishes exactly
$\frd/\fro$ times (whenever $h\in \ker(\varphi)$). In all other cases $\tc(r_h)\not=0$, and
$\tc(r_h)+\tc(r_{-h})=1$. In particular, $2\sum_h\tc(r_h)=\frd-\frd/\fro$.
Therefore, the summation of (\ref{eq:PP}) provides
\begin{equation}\label{eq:PPne}
Q(Z_{ne})(n)=-\frac{\frd en^2}{2}+\frac{\frd en}{2}(\gamma+\frac{1}{\fro})+\mathrm{pc}(Z_{ne})
\ \ \ \ \mbox{(for $n\in\alpha\Z$)}.
\end{equation}

Next, we will identify the coefficients of (\ref{eq:PP}) and (\ref{eq:PPne})
with the first three coefficient of the
Ehrhart quasipolynomial $\calL^\calC_h(\calT)$ via the identity (\ref{ex:qbar2}).

For simplicity we will assume that $\fro=1$, in particular all the $b_k$--vectors belong to $L$.

If $l'\in\calR$, then by Reduction theorem  the counting function $\calL^\calC_h(\calT,l')$
of the polytope $P^{(l')}$ depends only on the $E_0$--coefficient of $l'$; let us denote this coefficient by $l'_0$.

Hence,  this $\calL^\calC_h(\calT,l'_0)$ is the Ehrhart quasipolynomial of the $d$-dimensional
simplicial polytope,
being its $h$--class counting function. Via (\ref{eq:DET}) the definition (\ref{eq:Pv}) of this polytope becames
\begin{equation}\label{eq:P0}
P_0=\{(x_1,\ldots,x_d)\in (\R_{\geq 0})^d\,:\, \sum_i\frac{x_i}{|e|\alpha_i}< l'_0\}.
\end{equation}
Let
\begin{equation}\label{eq:fra}
\calL^\calC_h(\calT,l'_0)=\sum_{j=0}^d \, \fra_{h,j}(l'_0)\cdot \frac{(l'_0)^j}{j!}
\end{equation}
be the coefficients of the Ehrhart quasipolynomial: each $\fra_{h,j}(l'_0)$ is a periodic function in $l'_0$ and is normalized by $1/j!$.
Since the numerator of $f$ is $(1-t^{1/|e|})^{d-2}$, by
(\ref{ex:qbar2}) we obtain for $l'\in\calR$
\begin{equation}\label{eq:fraP}
Q_h(l')=\sum_{j=0}^d\, \fra_{h,j}(l'_0)\cdot \frac{1}{j!}\ \sum_{k=0}^{d-2} (-1)^k\binom{d-2}{k}\Big(l'_0-\frac{k}{|e|}\Big)^j.
\end{equation}
This equals the expression
(\ref{eq:QPpol}) above. The non-polynomial behavior of these
two expressions indicate that $\fra_j(l'_0)$ is indeed nonconstant periodic, and can be determined explicitly.

Since we are interested primarily in the Seiberg--Witten invariant, namely in $\mathrm{pc}(Z_h)$, we perform
this explicit identification only via the expressions (\ref{eq:PP}) and (\ref{eq:PPP}). Hence, similarly
as in these cases, we take $l'=m\fro E_0^*\in\calR^{\geq c}\cap L$, and we identify (\ref{eq:PP})
with (\ref{eq:fraP}) evaluated for $l'$, whose $E_0$--coefficient is $l'_0=m\alpha=n$.
In this case $\fra_{h,j}(n)$ is a {\it constant}, denoted by $\fra_{h,j}$, and
\begin{equation}\label{eq:fraPP}
-\frac{en^2}{2}+\frac{ne}{2}(\gamma+1-2\tc)+\mathrm{pc}(Z_h)=
\sum_{j=0}^d \fra_{h,j}\cdot \frac{1}{j!}\ \sum_{k=0}^{d-2} (-1)^k\binom{d-2}{k}\Big(n-\frac{k}{|e|}\Big)^j.
\end{equation}
Here it is helpful the combinatorial expression (see e.g. \cite[p. 7-8]{PSz})
\begin{equation}\label{eq:PSz}
\frac{(-1)^d}{(d-2)!}\cdot \sum_{k=0}^{d-2}(-1)^k\binom{d-2}{k}k^j=\left\{
\begin{array}{ll}
0 & \mbox{if $j<d-2$},\\
1 & \mbox{if $j=d-2$},\\
(d-2)(d-1)/2 & \mbox{if $j=d-1$},\\
(d-2)(d-1)d(3d-5)/24 & \mbox{if $j=d$}.\end{array}\right.
\end{equation}
We obtain
\begin{equation}\label{eq:fra3a}
\begin{array}{rl}\vspace{1mm}
\frac{\fra_{h,d}}{|e|^{d}}=&
\frac{1}{|e|}\\\vspace{1mm}
\frac{\fra_{h,d-1}}{|e|^{d-1}}=&
\frac{d-2}{2|e|}-\frac{1}{2}(\gamma+1-2\tc)
\\ \vspace{1mm}
\frac{\fra_{h,d-2}}{|e|^{d-2}}=&
\mathrm{pc}(Z_h)+\frac{(d-2)(3d-7)}{24|e|}-\frac{d-2}{4}(\gamma+1-2\tc)
.
\end{array}
\end{equation}
In particular, the $\fra_{h,d-2}$ can be identified (up to
 `easy' extra terms) with  ${\rm pc}(Z_h)$ (with  analytical interpretation
$\dim (H^1(\widetilde{Y},\cO_{\widetilde{Y}})_{\theta(h)})$ and Seiberg--Witten
theoretical interpretation (\ref{eq:pcrh})).
The first coefficients can also be identified with the equivariant volume of $P_0$,
(a fact already known in the non-equivariant cases). Usually (in the non-equivariant case, and when
we count the points of all the facets)
the second coefficient can be related with the volumes of the facets.
Here we eliminate from this count some of the facets, and we are in the equivariant situation as well.
The expression of the  second coefficient is also a novelty of the present article
(to the best of author's knowledge).

In the non-equivariant case, if $\sum_{j=0}^d \fra_{j}\frac{n^j}{j!}$ is the classical Ehrhart polynomial
of $P_0$, then
\begin{equation}\label{eq:fra3b}
\begin{array}{rl}\vspace{1mm}
\frac{\fra_{d}}{|e|^d}=&
\prod_i\alpha_i\\\vspace{1mm}
\frac{\fra_{d-1}}{|e|^{d-1}}=&
\prod_i\alpha_i \cdot\left(-\frac{1}{\alpha}+\sum_i\frac{1}{\alpha_i}\right)/2
\\ \vspace{1mm}
\frac{\fra_{d-2}}{|e|^{d-2}}=&
\prod_i\alpha_i \Big(
\frac{\mathrm{pc}(Z_{ne})}{\prod_i\alpha_i}-\frac{(d-2)(3d-5)}{24}+\frac{d-2}{4}(-\frac{1}{\alpha}+
\sum_i\frac{1}{\alpha_i})\Big).
\end{array}
\end{equation}
In this non-equivariant case the identities (\ref{eq:fra3b}) are valid even without
the assumption $\fro=1$ by the same proof.

The formulae in (\ref{eq:fra3a}) and (\ref{eq:fra3b}) can be further simplified if we replace $P_0$
by $|e|P_0$, or if we substitute in the Ehrhart polynomial the new variable $\lambda:= |e|l_0'$ instead of $l_0'$; cf. Section \ref{s:Last}.

\section{The two--node case}\labelpar{s:TN}

\subsection{Notations and the group $H$}\labelpar{ss:TNC}

We consider the following graph $\Gamma$:

\vspace{1cm}
\begin{center}
\begin{picture}(200,60)(80,15)
\put(100,40){\circle*{3}}
\put(140,40){\circle*{3}}
\put(100,40){\line(1,0){55}}
\put(171,40){\makebox(0,0){$\cdots$}}
\put(200,40){\circle*{3}}
\put(240,40){\circle*{3}}
\put(185,40){\line(1,0){55}}
\put(50,73){\circle*{3}}
\put(100,40){\line(-3,2){10}}
\put(50,73){\line(3,-2){10}}
\multiput(77,55)(-3,2){3}%
{\circle*{1}}

\multiput(70,43)(0,-3){3}%
{\circle*{1}}

\put(50,7){\circle*{3}}
\put(100,40){\line(-3,-2){10}}
\put(50,7){\line(3,2){10}}
\multiput(77,25)(-3,-2){3}%
{\circle*{1}}

\put(290,73){\circle*{3}}
\put(240,40){\line(3,2){10}}
\put(290,73){\line(-3,-2){10}}
\multiput(263,55)(3,2){3}%
{\circle*{1}}

\multiput(270,43)(0,-3){3}%
{\circle*{1}}

\put(290,7){\circle*{3}}
\put(240,40){\line(3,-2){10}}
\put(290,7){\line(-3,2){10}}
\multiput(263,25)(3,-2){3}%
{\circle*{1}}

\put(100,50){\makebox(0,0){$E_0$}}
\put(140,50){\makebox(0,0){$\overline E_1$}}
\put(200,50){\makebox(0,0){$\overline E_s$}}
\put(240,50){\makebox(0,0){$\widetilde E_0$}}
\put(35,75){\makebox(0,0){$E_1$}}\put(35,5){\makebox(0,0){$E_d$}}
\put(303,70){\makebox(0,0){$\widetilde E_1$}}
\put(303,5){\makebox(0,0){$\widetilde E_{\widetilde{d}}$}}

\multiput(10,85)(0,-8){12}{\line(0,-1){4}}
\multiput(10,85)(8,0){26}{\line(1,0){4}}
\multiput(214,85)(0,-8){12}{\line(0,-1){4}}
\multiput(214,-7)(-8,0){26}{\line(-1,0){4}}
\put(20,40){\makebox(0,0){$\Gamma_0$}}

\multiput(330,80)(-8,0){26}{\line(-1,0){4}}
\multiput(330,80)(0,-8){12}{\line(0,-1){4}}
\multiput(126,80)(0,-8){12}{\line(0,-1){4}}
\multiput(126,-13)(8,0){26}{\line(1,0){4}}
\put(320,35){\makebox(0,0){$\widetilde\Gamma_0$}}

\end{picture}
\end{center}

\vspace{1.5cm}

The nodes $E_0$ and $\widetilde{E}_0$ have decorations $b_0$ and $\widetilde{b}_0$ respectively.
Similarly as in the one--node case, we codify the decorations of maximal chains by
continued fraction expansions. In fact, it is convenient to consider the two maximal
star--shaped graphs $\Gamma_0$ and $\widetilde{\Gamma}_0$, and the corresponding normalized
Seifert invariants of their legs. Hence, let the
normalized Seifert invariants of the legs with ends $E_i$ ($1\leq i\leq d$) be  $(\alpha_i,\omega_i)$, while of the legs with ends  $\widetilde{E}_j$ ($1\leq j\leq \widetilde{d}$)
be $(\widetilde\alpha_j,\widetilde\omega_j)$.

The chain connecting the nodes, viewed in $\Gamma_0$ has normalized
Seifert invariants $(\alpha_0,\omega_0)$, while
viewed as a leg in $\widetilde{\Gamma}_0$, it has Seifert invariants
$(\alpha_0,\widetilde{\omega}_0)$. One has $\omega_0 \widetilde \omega_0=\alpha_0 \tau +1$.
Clearly, $\alpha_0$ is the determinant of the chain, and

\begin{picture}(400,40)(30,5)
\put(40,20){\makebox(0,0)[l]{$\omega_0:= \det(\hspace{23mm})$}}
\put(100,20){\circle*{3}}
\put(150,20){\circle*{3}}
\put(100,20){\line(1,0){10}}
\put(150,20){\line(-1,0){10}}
\put(125,20){\makebox(0,0){$\cdots$}}
\put(100,30){\makebox(0,0){$\overline E_2$}}
\put(150,30){\makebox(0,0){$\overline E_s$}}
\put(190,20){\makebox(0,0)[l]{$\widetilde{\omega}_0:= \det(\hspace{25mm})$}}
\put(250,20){\circle*{3}}
\put(300,20){\circle*{3}}
\put(250,20){\line(1,0){10}}
\put(300,20){\line(-1,0){10}}
\put(275,20){\makebox(0,0){$\cdots$}}
\put(250,30){\makebox(0,0){$\overline E_1$}}
\put(300,30){\makebox(0,0){$\overline E_{s-1}$}}

\put(346,20){\makebox(0,0)[l]{$\tau:= \det(\hspace{25mm}).$}}
\put(400,20){\circle*{3}}
\put(450,20){\circle*{3}}
\put(400,20){\line(1,0){10}}
\put(450,20){\line(-1,0){10}}
\put(425,20){\makebox(0,0){$\cdots$}}
\put(400,30){\makebox(0,0){$\overline E_2$}}
\put(450,30){\makebox(0,0){$\overline E_{s-1}$}}
\end{picture}

We denote the orbifold Euler numbers of the star--shaped subgraphs $\Gamma_0$
and $\widetilde\Gamma_0$ by
$$e=b_0+ \frac{\omega_0}{\alpha_0}+\sum_i \frac{\omega_i}{\alpha_i}  \ \ \ \mbox{and} \ \ \
\widetilde{e}=\widetilde b_0+
\frac{\widetilde\omega_0}{\alpha_0}+
\sum_j \frac{\widetilde\omega_j}{\widetilde\alpha_j}.$$
Consider the  {\it orbifold intersection matrix}
$I^{orb}=\left(
\begin{array}{lr}
 e & 1/\alpha_0 \\
 1/\alpha_0 & \widetilde{e}
\end{array}
\right)$, cf. \cite[4.1.4]{BN07}.

\noindent Then, the
negative definiteness of $I$ (or $\Gamma$)  implies
that  $I^{orb}$  is negative definite too, hence
$$\varepsilon:=\det I^{orb}=e \widetilde e - \frac{1}{\alpha_0^2}>0.$$
Then the determinant of the graph is $\det(\Gamma)=\det(-I)=\varepsilon\cdot \alpha_0 \prod_i \alpha_i \prod_j \widetilde{\alpha}_j$, cf. \cite{BN07}.

Using (\ref{eq:DETsgr}) we have the following intersection number of the
dual base elements:
\begin{equation}\label{eq:DET2node}
\begin{array}{rl}
(E^*_0)^2=\frac{\widetilde e}{\varepsilon}; \ \
(\widetilde E^*_0)^2=\frac{e}{\varepsilon}; \ \
(E^*_0,\widetilde E^*_0)=-\frac{1}{\alpha_0 \varepsilon}; \ \
(E^*_0,E^*_i)=\frac{\widetilde e}{\alpha_i \varepsilon}; \\
(E^*_0,\widetilde E^*_j)=-\frac{1}{\alpha_0 \widetilde \alpha_j \varepsilon}; \ \
(\widetilde E^*_0,E^*_i)=-\frac{1}{\alpha_0 \alpha_i \varepsilon}; \ \
(\widetilde E^*_0,\widetilde E^*_j)=\frac{e}{\widetilde \alpha_j \varepsilon}.
\end{array}
\end{equation}
Similarly as in \ref{ex:lens} or \ref{ss:seiferjel}, we can write
$n^i_{k_1,k_2}$, \  $\widetilde{n}^j_{k_1,k_2}$ resp. $\overline{n}_{k_1,k_2}$
 for the the determinant of the
sub--chains  of the `left' $i^{th}$ leg, `right'  $j^{th}$ leg and connecting chain
 connecting the vertices $v_{k_1}$ and $v_{k_2}$. Let $\nu_i$ and $\tilde{\nu}_j$ be the
 number of vertices in the legs, cf. \ref{ss:seiferjel}. Then (with the standard notations, where
 $E_{i\ell}$ and $\widetilde{E}_{j\ell}$ are the vertices of the legs)
 one has the following slightly technical Lemma, but whose proof is standard based on the arithmetical
 properties of continued fractions:
\begin{lemma}\labelpar{lem:legeq}
(a) \
  $E^*_{i\ell}=n^i_{\ell+1,\nu_i} E^*_i+ \sum_{\ell<r\leq \nu_i}\frac{n^i_{1,\ell-1} n^i_{r+1,\nu_i}-
n^i_{1,r-1} n^i_{\ell+1,\nu_i}}{\alpha_i}E_{ir}$\ \ for any $1\leq \ell<\nu_i$.

(There is a similar formula for $\widetilde E^*_{j\ell}$.)

  (b) \  $\overline E^*_k=\overline{n}_{1,k-1}\overline E^*_1-\overline{n}_{2,k-1}E^*_0+
  \sum_{1\leq r<k} \frac{\overline{n}_{1,r-1} \overline{n}_{k+1,s}-\overline{n}_{1,k-1}
  \overline{n}_{r+1,s}}
{\alpha_0}\overline E_r \,$, for $1<k\leq s$.

(This is true even for $k=s+1$ with the identification $\overline{E}^*_{k+1}=\widetilde{E}^*_0$.)

\end{lemma}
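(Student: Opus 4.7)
Both identities are equalities of elements of $L'\subset L\otimes\Q$. Since the intersection form $(\,,\,)$ is non-degenerate on $L\otimes\Q$, and $\{E_v\}_{v\in\cV}$ is a $\Q$--basis of $L\otimes\Q$, it is enough to verify that both sides produce the same value after being paired with each $E_w$, $w\in\cV$. The plan is to do this case-by-case, using only two kinds of input: the defining duality $(E^*_u,E_v)=-\delta_{uv}$, and the standard arithmetic identities for the continued fraction numerators $n^i_{k_1,k_2}$ (respectively $\overline{n}_{k_1,k_2}$), namely the three-term recursion $b_{ik}\cdot n^i_{k+1,\nu_i}=n^i_{k,\nu_i}+n^i_{k+2,\nu_i}$ and the Cramer-type determinant identity
\begin{equation*}
n^i_{1,k}\,n^i_{k+1,\nu_i}-n^i_{1,k-1}\,n^i_{k+2,\nu_i}=\alpha_i,
\end{equation*}
with the boundary conventions $n^i_{1,0}=n^i_{\nu_i+1,\nu_i}=1$, $n^i_{1,-1}=n^i_{\nu_i+2,\nu_i}=0$ (and likewise for $\overline{n}$).

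For part (a), the left hand side satisfies $(E^*_{i\ell},E_w)=-\delta_{w,v_{i\ell}}$. On the right hand side, $(E^*_i,E_w)=-\delta_{w,v_{i\nu_i}}$, while $(E_{ir},E_w)$ is the $(v_{ir},w)$--entry of $I$, hence vanishes unless $w=v_{ir}$ or $w$ is adjacent to $v_{ir}$. As $r$ runs in $\ell<r\leq \nu_i$ and $\ell\geq 1$, the vertex $v_{i1}$ (the only one on the $i$-th leg adjacent to a vertex outside it, namely $E_0$) never appears in the sum, so both sides pair trivially with any $w$ outside $\{v_{i\ell},v_{i,\ell+1},\dots,v_{i\nu_i}\}$. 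It remains to check:
(i) $w=v_{i\ell}$: only $r=\ell+1$ contributes on the right, giving the coefficient of $E_{i,\ell+1}$, and the Cramer identity above forces the value $-1$;
(ii) $w=v_{ik}$ with $\ell<k<\nu_i$: the three terms coming from $r=k-1,k,k+1$ combine via the three-term recursion $b_{ik}\,n^i_{k+1,\nu_i}=n^i_{k,\nu_i}+n^i_{k+2,\nu_i}$ applied to both factors, yielding $0$;
(iii) $w=v_{i\nu_i}=E_i$: the $E^*_i$--term contributes $-n^i_{\ell+1,\nu_i}$, the $r=\nu_i$ and $r=\nu_i-1$ terms together contribute $+n^i_{\ell+1,\nu_i}$ thanks to $n^i_{\nu_i+1,\nu_i}=1$ and $n^i_{\nu_i,\nu_i}=b_{i\nu_i}$, and everything cancels.

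For part (b), the strategy is identical, but now the right hand side contains two `dual' base elements, $\overline{E}^*_1$ and $E^*_0$, whose pairings with $E_w$ are clean Kronecker deltas $-\delta_{w,\overline{E}_1}$ and $-\delta_{w,E_0}$. As before, $(\overline{E}_r,E_w)$ is nonzero only when $w\in\{\overline{E}_{r-1},\overline{E}_r,\overline{E}_{r+1}\}$, so only $w$'s lying on the connecting chain or being $E_0$ or $\widetilde{E}_0$ can produce nontrivial pairings. The cases $w=\overline{E}_k$, $w=\overline{E}_j$ for $j\ne k$, $w=\overline{E}_1$, and $w=E_0$ are each handled by the same three-term recursion and Cramer identity applied to the chain numerators $\overline{n}_{k_1,k_2}$, while the case $w=\widetilde{E}_0$ (the node at the far end) requires combining the boundary term $\overline{n}_{s+1,s}=1$ with the last summand $r=s$ to obtain the right contribution; this is also the check that validates the extension $\overline{E}^*_{s+1}=\widetilde{E}^*_0$.

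The only genuine obstacle is bookkeeping: making sure that the three-term recursion and the Cramer identity are applied with the correct boundary conventions at the two ends of each chain, and that all vanishing terms from neighboring-vertex pairings are accounted for. Once this is organized, no step requires anything beyond elementary arithmetic of continued fractions, which is why the lemma is essentially a standard fact about the geometry of legs and chains in plumbed graphs.
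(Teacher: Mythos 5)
Your approach is sound, and since the paper explicitly omits the argument (it only says the lemma is ``standard based on the arithmetical properties of continued fractions''), your plan is precisely the natural instantiation of that remark: verify the equality of elements of $L'\subset L\otimes\Q$ by pairing both sides against the $\Q$--basis $\{E_v\}_v$, using non--degeneracy of the intersection form together with the three--term recursion $n^i_{k,\nu_i}=b_{ik}\,n^i_{k+1,\nu_i}-n^i_{k+2,\nu_i}$ and the splitting identity $n^i_{1,k}\,n^i_{k+1,\nu_i}-n^i_{1,k-1}\,n^i_{k+2,\nu_i}=\alpha_i$. I checked your three cases in (a): in (i) the Cramer identity gives $c_{\ell+1}=-1$; in (ii) the two recursions (on $n^i_{\cdot,\nu_i}$ and on $n^i_{1,\cdot}$) kill both brackets; in (iii) using $n^i_{\nu_i+1,\nu_i}=1$ and $n^i_{\nu_i,\nu_i}=b_{i\nu_i}$ one gets $c_{\nu_i-1}-b_{i\nu_i}c_{\nu_i}=n^i_{\ell+1,\nu_i}$, cancelling the $E^*_i$--term. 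One small bookkeeping point worth making explicit: when $k=\ell+1$ the summand $r=k-1=\ell$ is outside the stated range, but the coefficient $c_\ell=\bigl(n^i_{1,\ell-1}n^i_{\ell+1,\nu_i}-n^i_{1,\ell-1}n^i_{\ell+1,\nu_i}\bigr)/\alpha_i$ vanishes identically, so the recursion argument in (ii) still closes. Part (b) proceeds the same way; for the extension to $k=s+1$, note that $\overline n_{s+2,s}=0$ and $\overline n_{1,s}=\alpha_0$ collapse the coefficients to $-\overline n_{r+1,s}$, and pairing with $\widetilde E_0$ picks up exactly $\overline n_{s+1,s}\,(\overline E_s,\widetilde E_0)=-1$, matching $(\widetilde E^*_0,\widetilde E_0)$. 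In short: no gap, the argument is correct and is the one the paper has in mind.
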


Next, we give a presentation of $H=L'/ L$. Set $g_i:=[E^*_i]$ ($1\leq i\leq d$),
$\widetilde g_j:=[\widetilde E^*_j]$ ($1\leq j\leq \widetilde{d}$), $g_0:=[E^*_0]$ and
$\widetilde g_0:=[\widetilde E^*_0]$. Moreover we need to choose an additional generator
corresponding  to a vertex
sitting on the connecting chain: we choose $\overline{g}:=[\overline E^*_1]$ (this motivates
the choice in Lemma \ref{lem:legeq})(b) too).
The above lemma implies
\begin{equation}\label{eq:dual2nodes}
[E^*_{i\ell}]=n^i_{\ell+1,\nu_i}g_i, \ \ \ [\widetilde
E^*_{j\ell}]=\widetilde{n}^j_{\ell+1,\widetilde{\nu}_j}\widetilde g_j \ \ \
\mbox{and} \ \ \  [\overline{E}^*_k]=\overline{n}_{1,k-1}\overline{g}-\overline{n}_{2,k-1}g_0;
\end{equation}
and similar arguments as in the star--shaped case provides the following presentation for $H$
\begin{eqnarray}\label{eq:2Hpres}
{\textstyle  H={\mathrm{ab}}\langle \, g_0,\widetilde g_0, g_i, \widetilde g_j, \overline{g}
\,|\, g_0=\alpha_i\cdot g_i; \
\widetilde g_0 = \widetilde \alpha_j \cdot \widetilde g_j; \ \alpha_0 \cdot
\overline{g}=\omega_0 \cdot g_0 +\widetilde g_0;} \\ \nonumber {\textstyle
-\overline{g}-b_0\cdot g_0=\sum_i \omega_i \cdot g_i; \ -\widetilde \omega_0 \cdot
\overline{g} + \tau \cdot g_0
-\widetilde b_0 \cdot \widetilde g_0= \sum_j \widetilde \omega_j \cdot \widetilde g_j \rangle.}
\end{eqnarray}
Moreover,  for any  $l' \in L'$,
$$\textstyle{l'=c_0 E^*_0 + \widetilde c_0 \widetilde E^*_0+\sum_k \overline{c}_k
\overline{E}^*_k+\sum_{i,\ell} c_{i\ell}E^*_{i\ell}+\sum_{j\ell}
\widetilde c_{j\ell}\widetilde E^*_{j\ell},}$$
if we define its {\it reduced transform}  $l'_{red}$ by
\begin{equation*}
({c}_0-\sum_{k>1}
\overline{n}_{2,k-1}\overline{c}_k){E}^*_0 +\widetilde c_0 \widetilde E^*_0 + (\overline{c}_1+\sum_{k>1}\overline{n}_{1,k-1}\overline{c}_k)\overline{E}^*_1+
\sum_{i,\ell}
c_{i\ell}n^i_{\ell+1,\nu_i}E^*_i+ \sum_{j,\ell} \widetilde c_{j\ell}\widetilde n^j_{\ell+1,\widetilde{\nu}_j}\widetilde E^*_j,
\end{equation*}
then, by Lemma \ref{lem:legeq},  $[l']=[l'_{red}]$ in $H$.
Moreover,  if for any $l'\in L'$ we distinguish the $E_0$ and $\widetilde{E}_0$ coefficients,
that is,
we set $c(l'):=-(E^*_0,l')$ and $\widetilde c(l'):=-(\widetilde E^*_0,l')$,
then $c(l')=c(l'_{red})$
and $\widetilde c(l')=\widetilde c(l'_{red})$ as well. Lemma \ref{lem:legeq}(b) (applied for
$k=s+1$) provide these coefficients for $\overline{E}_1$:
\begin{equation}\label{eq:E1bar}
(\overline{E}_1^*,E_0^*)=\frac{1}{\varepsilon\alpha_0}\big(\omega_0\widetilde{e}-\frac{1}{\alpha_0}\big),
\ \ \ \
(\overline{E}_1^*,\widetilde{E}_0^*)=\frac{1}{\varepsilon\alpha_0}
\big(e-\frac{\omega_0}{\alpha_0}\big).
\end{equation}
We will use the coefficients $\bc=(c_0,\widetilde c_0,
\overline c, c_i,\widetilde c_j)$ to write an element
$l'_{red}=c_0E^*_0+ \widetilde{c}_0\widetilde E^*_0+ \overline{c}\overline{E}^*_1+
\sum_ic_iE^*_i+\sum_j\widetilde{c}_j\widetilde E^*_j$. Then (\ref{eq:DET2node}) and (\ref{eq:E1bar})
imply that
\begin{equation}\label{eq:AA}
\left( \begin{array}{c}  c\\   \widetilde{c} \end{array} \right)=
\left( \begin{array}{c}  c(l'_{red})\\   \widetilde{c}(l'_{red})
 \end{array} \right)
 = (-I^{orb})^{-1}\cdot \left( \begin{array}{c}   A\\
\widetilde A   \end{array} \right)
=\frac{1}{\varepsilon}
\left( \begin{array}{lr}
 -\widetilde{e} & 1/\alpha_0 \\
 1/\alpha_0 & -e
\end{array} \right)
\cdot \left( \begin{array}{c} A\\
\widetilde A \end{array} \right),
\end{equation}
where
\begin{equation*}
 A:=c_0+\sum_i \frac{c_i}{\alpha_i}+\frac{\omega_0}{\alpha_0} \overline{c}, \ \ \ \ \ \ \
 \widetilde A:=\widetilde c_0+\sum_j \frac{\widetilde c_j}{\widetilde \alpha_j}+\frac{1}{\alpha_0} \overline{c}.
\end{equation*}
Therefore, any
$h\in H$ has a lift of type  $l'_{h,red}$. Although the corresponding coefficients  $c$ and
$\widetilde c$ depend on the lift, by adding $\pm E_0$ and $\pm \widetilde E_0$ to $l'_{h,red}$ we can achieve
$c,\widetilde c \in [0,1)$, and these values
are uniquely determined by $h$. For example, the reduced  transform $(r_h)_{red}$ of
$r_h$ satisfies  $c((r_h)_{red})=c(r_h)\in[0,1) $ and $\widetilde c((r_h)_{red})=\widetilde c(r_h)
\in[0,1)$ since $r_h \in \square$.

As we will see, for different elements of $h\in H$,
we have to shift the rank two lattices  by vectors of type $(c,\widetilde{c})$, hence the vectors
$(c,\widetilde c)$ will play a crucial role later.

\subsection{The function Z} If we wish to compute the periodic constant of $Z^e(\bt)$, by Theorem
 \ref{th:REST} we can eliminate all the variables of
$Z^e(\bt)$ except  the variables of the nodes; these remaining two variables
are denoted by $(t,\wtt)$.
Therefore the equivariant form of reciprocal
of the denominator is
\begin{eqnarray*}
Z^{/H}(t,\wtt) &=& \prod_{i}\,\big(1-t^{-(E^*_{i}, E^*_0)}\wtt^{-(E^*_{i}, \widetilde E^*_0)}
[g_i]\big)^{-1}\cdot
\prod_{j}\,\big(1-t^{-(\widetilde E^*_{j}, E^*_0)}\wtt^{-(\widetilde E^*_{j},
\widetilde E^*_0)}[\widetilde g_j]
\big)^{-1}\\
&=& \sum_{x_i,\widetilde x_j\geq 0}\, t^{\,\frac{-\widetilde e}{\varepsilon}\sum_i
\frac{x_i}{\alpha_i} + \frac{1}{\alpha_0 \varepsilon} \sum_j \frac{\widetilde x_j}{\widetilde \alpha_j}}
\,\, \wtt^{\ \frac{1}{\alpha_0 \varepsilon}\sum_i
\frac{x_i}{\alpha_i} + \frac{-e}{\varepsilon} \sum_j \frac{\widetilde x_j}
{\widetilde \alpha_j}}\ \big[\,\textstyle{\sum _i} x_ig_i
+\textstyle{\sum_j} \widetilde x_j \widetilde g_j\big].
\end{eqnarray*}
We fix a lift $c_0 E^*_0+\widetilde c_0 \widetilde E^*_0 +\overline c\, \overline{E}^*_1+
\sum_i c_i E^*_i+\sum_j \widetilde c_j \widetilde E^*_j$ of $h$.  Then
the class of $\sum _i x_iE^*_i+\sum_j \widetilde x_j \widetilde E^*_j$
equals $h$ if and only if its  difference with
the lift is a linear combination of the relation in \ref{eq:2Hpres}. In other words,
if  there exist $\ell_0,\widetilde \ell_0, \overline \ell,\ell_i,
\widetilde \ell_j \in \Z$ such that
$$\begin{array}{lrlllrll}
(a) \ & -c_0&= \sum_i \ell_i-b_0 \ell_0 +\tau \widetilde \ell_0 +\omega_0 \overline \ell \hspace{5mm}&
(c) \ & x_i-c_i&= -\omega_i \ell_0-\alpha_i \ell_i & \ (i=1,\ldots, d)
\\
(b) \ & -\widetilde c_0&= \sum_j \widetilde \ell_j-\widetilde b_0 \widetilde\ell_0 +\overline \ell &
(d) \ & \widetilde x_j-\widetilde c_j&= -\widetilde \omega_j \widetilde\ell_0-\widetilde\alpha_j
\widetilde\ell_j & \ (j=1,\ldots, \widetilde d)
\\
(e) \ & -\overline c&= -\ell_0-\widetilde\omega_0 \widetilde\ell_0-\alpha_0 \overline \ell.
\end{array}$$
From (e) we deduce that
\begin{equation}\label{eq:CONGR}
\ell_0+\widetilde\omega_0 \widetilde\ell_0\equiv \overline c \,(\mbox{mod} \,\alpha_0).
\end{equation}
Since $x_i,\widetilde x_j\geq 0$, (c) and (d) implies $\frac{c_i-\omega_i\ell_0}{\alpha_i}\geq \ell_i$,
$\frac{\widetilde c_j-\widetilde\omega_j \widetilde\ell_0}{\widetilde\alpha_j}\geq \widetilde\ell_j$.
Recall also that $\omega_0 \widetilde \omega_0=\alpha_0 \tau +1$.
Therefore if we set $m_i:=\lfloor \frac{c_i-\omega_i\ell_0}{\alpha_i} \rfloor-\ell_i$ and $\widetilde m_j:=
\lfloor \frac{\widetilde c_j-\widetilde\omega_j \widetilde\ell_0}{\widetilde\alpha_j}\rfloor-\widetilde \ell_j$
 non-negative integers then the number of the realization of $h$ in the form $\sum_i x_ig_i+\sum_j\widetilde x_j\widetilde g_j$
is determined by the number of non-negative  integral $(d+\widetilde d)$-tuples $(m_i,\widetilde m_j)$
satisfying
$$\begin{array}{lrlr}
  & N_\bc(\ell_0,\widetilde \ell_0)&:= c_0+\frac{\omega_0}{\alpha_0}\overline c -(b_0+\frac{\omega_0}{\alpha_0})
\ell_0- \frac{1}{\alpha_0}\widetilde\ell_0 + \sum_i \lfloor \frac{c_i-\omega_i\ell_0}{\alpha_i}
\rfloor& = \sum_i m_i \, , \\
& \widetilde N_\bc(\ell_0,\widetilde \ell_0)&:= \widetilde c_0+\frac{1}{\alpha_0}\overline c -
(\widetilde b_0+\frac{\widetilde\omega_0}{\alpha_0}) \widetilde\ell_0- \frac{1}{\alpha_0}\ell_0 +
\sum_j \lfloor \frac{\widetilde c_j-\widetilde\omega_j\widetilde\ell_0}{\widetilde\alpha_i} \rfloor& =
\sum_j \widetilde m_j \, .
  \end{array}
$$
This number is $\binom{N_\bc(\ell_0,\widetilde \ell_0)+d-1}{d-1}\binom{\widetilde N_\bc(\ell_0,\widetilde \ell_0)
+\widetilde d-1}{\widetilde d-1}$ if $N_\bc$ and $\widetilde N_\bc \geq 0$, otherwise is $0$.
Note that (\ref{eq:CONGR}) guarantees that both $N_\bc$ and $\widetilde N_\bc $ are integers.
Furthermore, (c) and (d) and (\ref{eq:AA}) show  that
the exponent of $t$ and $\wtt$ in the formula of $Z_h^{/H}(t,\wtt)$
are  equal to $\ell_0+c$ and  $\widetilde \ell_0 +\widetilde c$ respectively. Hence
$$Z_h^{/H}(t,\wtt)=\sum  
\ \binom{N_\bc(\ell,\widetilde \ell)+d-1}{d-1}
\ \binom{\widetilde N_\bc(\ell,\widetilde \ell)+\widetilde d-1}{\widetilde d-1}\
 t^{\ell+c} \ \wtt^{\widetilde \ell +\widetilde c},
$$
where the sum runs over $(\ell,\widetilde \ell)\in \Z^2$ with  $\ell+\widetilde\omega_0 \widetilde\ell \equiv
\overline c \ (\mbox{mod} \ \alpha_0)$.

The numerator of $Z(t,\wtt)$ is
$\big(1-t^{-(E^*_{0}, E^*_0)}\wtt^{-(E^*_{0}, \widetilde E^*_0)}[g_0]\big)^{d-1}\cdot
\big(1-t^{-(\widetilde E^*_{0}, E^*_0)}\wtt^{-(\widetilde E^*_{0}, \widetilde E^*_0)}[\widetilde g_0]
\big)^{\widetilde{d}-1}$.
Hence we get $Z^e$ by multiplying this expression by $\sum_hZ_h^{/H}[h]$.
Recall that
 $h=c_0 g_0+\widetilde c_0 \widetilde g_0 +\overline c\, \overline{g}+
\sum_i c_i g_i+\sum_j \widetilde c_j \widetilde g_j$ is paired  with ${\bf c}$. Set
$h':=h+ k g_0+\widetilde k \widetilde g_0$ which corresponds  to ${\bf c'}={\bf c}+(k,
\widetilde k,0,0,0)$. Hence $Z_{h'}[h']$ is the next  sum according to the  decompositions
$h'=h+ k g_0+\widetilde k \widetilde g_0$:
\begin{equation*}
\begin{array}{l}
\sum_{k=0}^{d-1}(-1)^k\binom{d-1}{k}\sum_{\widetilde k=0}^{\widetilde d-1}
(-1)^{\widetilde k}\binom{\widetilde d-1}{\widetilde k} \cdot \\ \hspace{1cm}\sum_h \
\left(
\sum_{\equiv_{\overline c}}\ \binom{N_\bc(\ell,\widetilde \ell)+d-1}{d-1}
\ \binom{\widetilde N_\bc(\ell,\widetilde \ell)+\widetilde d-1}{\widetilde d-1}\
 t^{\ell+c+\frac{-\widetilde e k+\widetilde k/\alpha_0}{\varepsilon}}
  \ \wtt^{\ \widetilde \ell +\widetilde c+
 \frac{-e\widetilde k+k/\alpha_0}{\varepsilon}}\right)\ [h']=
\end{array}
\end{equation*}
$$\textstyle{
\sum_{k=0}^{d-1}(-1)^k\binom{d-1}{k}\sum_{\widetilde k=0}^{\widetilde d-1}
(-1)^{\widetilde k}\binom{\widetilde d-1}{\widetilde k} \cdot
\sum_{h}\left(
\sum_{\equiv_{\overline c}}\ \binom{N_{\bc'}(\ell,\widetilde \ell)-k+d-1}{d-1}
\ \binom{\widetilde N_{\bc'}(\ell,\widetilde \ell)-\widetilde k +\widetilde d-1}{\widetilde d-1}\
 t^{\ell+c'} \ \wtt^{\widetilde \ell +\widetilde c'}\right)\
 }[h'].$$
Changing the orders of the sums and using the combinatorial formula
$\sum_{k=0}^{d-1}(-1)^k\binom{N-k+d-1}{d-1}\binom{d-1}{k}=1$ for $N\geq 0$ and $=0$ otherwise,
we get the following.
\begin{theorem}\label{th:UJZH}
For any $h\in H$ one has
\begin{equation}\label{eq:ZHTT}
 Z_h(t,\wtt)=\sum_{(\ell,\widetilde \ell)\in \cS_\bc} t^{\ell+c} \ \wtt^{ \ \widetilde\ell+\widetilde c},
 \ \ \ \ \mbox{where}
\end{equation}
\begin{equation}\label{eq:module}
\cS_\bc:=\left\{ (\ell,\widetilde \ell) \in \Z^2 \ : \ N_\bc(\ell,\widetilde \ell),
\widetilde N_\bc(\ell,\widetilde \ell)\geq 0\ \ \mbox{and} \
\ \ell+\widetilde\omega_0 \widetilde\ell \equiv
\overline c \ (\mbox{mod} \ \alpha_0)
\right\}.
\end{equation}\end{theorem}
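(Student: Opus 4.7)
The plan is to take the triple-sum expression for $Z^e(t,\wtt)$ already assembled just above the statement, collect the coefficient of each class $[h]\in H$ (written as $[h']$ in the preceding derivation), and then apply a one-line finite-difference identity in each of the two node variables. Fix $h\in H$ with lift $\bc=(c_0,\widetilde c_0,\overline c,c_i,\widetilde c_j)$; contributions to the coefficient of $[h]$ come from pairs $(k,\widetilde k)\in\{0,\ldots,d-1\}\times\{0,\ldots,\widetilde d-1\}$ in the numerator expansion, multiplied by the $Z^{/H}_{h_0}$ summand with $h_0:=h-kg_0-\widetilde k\widetilde g_0$ and lift $\bc_0:=\bc-(k,\widetilde k,0,0,0)$. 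The goal is to reindex by $(\bc,k,\widetilde k,\ell,\widetilde\ell)$ and perform the $k$ and $\widetilde k$ sums first.

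Two bookkeeping facts, both immediate, are needed. Using (\ref{eq:AA}) and (\ref{eq:DET2node}) one verifies that $(c,\widetilde c)-(c_0,\widetilde c_0)=(-I^{orb})^{-1}(k,\widetilde k)^T$, and this is exactly the pair of exponents in $t$ and $\wtt$ that the $(k,\widetilde k)$-term of the numerator contributes, so every monomial reassembles into the form $t^{\ell+c}\wtt^{\widetilde\ell+\widetilde c}$. The expression for $N_{\bc_0}(\ell,\widetilde\ell)$ is linear in $c_0$, so $N_{\bc_0}=N_\bc-k$, and symmetrically $\widetilde N_{\bc_0}=\widetilde N_\bc-\widetilde k$; meanwhile the congruence $\ell+\widetilde\omega_0\widetilde\ell\equiv\overline c\pmod{\alpha_0}$ involves only $\overline c$, which is untouched by the shift.

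After collecting, for each $(\ell,\widetilde\ell)$ satisfying the (common) congruence, the coefficient of $t^{\ell+c}\wtt^{\widetilde\ell+\widetilde c}[h]$ factors as
\begin{equation*}
\Bigl(\sum_{k=0}^{d-1}(-1)^k\binom{d-1}{k}\binom{N_\bc(\ell,\widetilde\ell)-k+d-1}{d-1}\Bigr)\cdot\Bigl(\sum_{\widetilde k=0}^{\widetilde d-1}(-1)^{\widetilde k}\binom{\widetilde d-1}{\widetilde k}\binom{\widetilde N_\bc(\ell,\widetilde\ell)-\widetilde k+\widetilde d-1}{\widetilde d-1}\Bigr).
\end{equation*}
The final ingredient is the elementary identity $\sum_{k=0}^{d-1}(-1)^k\binom{d-1}{k}\binom{N-k+d-1}{d-1}=1$ if $N\geq 0$ and $0$ otherwise, which is the coefficient of $t^N$ in $(1-t)^{d-1}(1-t)^{-d}=(1-t)^{-1}$. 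Applying it in each factor selects precisely those $(\ell,\widetilde\ell)$ with $N_\bc(\ell,\widetilde\ell)\geq 0$ and $\widetilde N_\bc(\ell,\widetilde\ell)\geq 0$, which together with the congruence is exactly $\cS_\bc$ of (\ref{eq:module}); this gives (\ref{eq:ZHTT}).

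The main obstacle is not the final combinatorial collapse but the preceding bookkeeping: verifying that the three simultaneous shifts — of the lift $\bc_0\mapsto\bc$, of the exponents of $t$ and $\wtt$, and of the integers $N_\bc,\widetilde N_\bc$ — align so that the double sum genuinely separates into independent $k$- and $\widetilde k$-sums on which the identity can be applied termwise.
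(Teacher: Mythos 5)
Your proof is correct and follows essentially the same route as the paper: collect the coefficient of $[h]$ in the triple sum, verify that the simultaneous shifts in $\bc$, in the $(t,\wtt)$ exponents via $(-I^{orb})^{-1}(k,\widetilde k)^T$, and in $N_\bc,\widetilde N_\bc$ all align so the $(k,\widetilde k)$-sum factors, and then collapse each factor with the finite-difference identity (the paper invokes the same identity in the sentence just before the theorem). Your generating-function reading of $\binom{N-k+d-1}{d-1}$ as the coefficient of $t^{N-k}$ in $(1-t)^{-d}$ is a clean way to enforce the needed convention that this vanishes for $N-k<0$, which the paper instead handles by having already declared the $Z_h^{/H}$-coefficient zero whenever $N_\bc<0$ or $\widetilde N_\bc<0$.
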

It is straightforward to verify that
the right hand side of (\ref{eq:ZHTT})  does not depend on the choice of $\bc$, it depends only on $h$.
The identity (\ref{eq:ZHTT}) is remarkable: it realizes the bridge between the series $Z^e$ and the equivariant Hilbert series of
{\it affine monoids and their modules}.

\subsection{The structure of $\cS_\bc$}

Recall that for any $h \in H$ and $\bc$ we consider a lift of $h$ identified by certain
${\bf c}$ which detemines the pair $(c,\widetilde c)$ (cf.  (\ref{eq:AA})), and the integers
 $N_\bc(\frl)$ and $\widetilde N_\bc(\frl)$, where  $\frl=(\ell,\well) \in \Z^2$.
We abridge  the congruence condition $\ell+\widetilde \omega_0 \well \equiv \overline c \
(\mbox{mod} \ \alpha_0)$  by $\equiv_\bc$.

If  $h=0$ then we always choose the zero lift with ${\bf c}={\bf 0}$.

If in the definition of  $N_\bc(\frl)$  and  $\widetilde N_\bc(\frl)$
we replace each $[y]$ by $y$, we get  the entries of
$$\left(\begin{array}{c}  A-e\ell_0-\widetilde \ell/\alpha_0 \\ \widetilde A-\ell_0/\alpha_0-\widetilde e \widetilde \ell_0 \end{array}\right)=
  -I^{orb}\left( \begin{array}{c} \ell+c \\   \well+\widetilde c \end{array}\right).$$
This motivates to define
\begin{equation}
\overline\calS_\bc:=\left\{ \frl\in \Z^2 \ : \ -I^{orb}
\left( \begin{array}{c}
 \ell+c \\
 \well+\widetilde c
\end{array}\right) \geq 0 \ \mbox{and} \ \frl \ \mbox{satisfies } \ \equiv_\bc
\right\}.
\end{equation}
Clearly $\calS_{{\bf c}}\subset \overline{\calS}_{{\bf c}}$.
We also consider
$\calC^{orb}$,  the real cone $\{\frl\in\R^2\ :\ -I^{orb}\cdot \frl\geq 0\}$.  Then
$\overline \calS_{\bf c}=\big(\calC^{orb}-(c,\widetilde{c})\big)\cap
\Z^2\cap (\equiv_{{\bf c}})$,
 where $(\equiv_{{\bf c}})$ means  that the elements
 satisfy the congruence $\equiv_{{\bf c}}$ too.

\begin{lemma}\labelpar{lem:fingen} (1) \ $\calS_{\bf 0}$ and $\overline \calS_{\bf 0}$ are affine monoids.
$\overline \calS_{\bf 0}$ is the normalization of  $\calS_{\bf 0}$.

(2) \  $\calS_{{\bf c}}$ and $ \overline{\calS}_{{\bf c}}$ are  finitely generated $\calS_{\bf 0}$-modules,
$\calS_{{\bf c}}$ is a submodule of $ \overline{\calS}_{{\bf c}}$.
\end{lemma}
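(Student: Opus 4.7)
The plan for part (1) is to first verify that both sets are submonoids of $\Z^2$. For $\overline\calS_{\bf 0}$ this is immediate from the linearity of $-I^{orb}$ and the additivity of the congruence. For $\calS_{\bf 0}$ I would invoke the super-additivity $\lfloor x\rfloor+\lfloor y\rfloor\le\lfloor x+y\rfloor$: applied to the floor terms in $N_{\bf 0}$ (resp.\ $\widetilde N_{\bf 0}$), this yields $N_{\bf 0}(\frl+\frl')\ge N_{\bf 0}(\frl)+N_{\bf 0}(\frl')$ whenever $\frl,\frl'\in\calS_{\bf 0}$; together with additivity of the congruence $\ell+\widetilde\omega_0\widetilde\ell\equiv 0\pmod{\alpha_0}$ and $(0,0)\in\calS_{\bf 0}$, this produces a submonoid. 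Finite generation of $\overline\calS_{\bf 0}$ then follows from Gordan's lemma applied to the intersection of the rational pointed cone $\calC^{orb}$ (pointed because $I^{orb}$ is negative definite) with the rank-two sublattice $L=\{\frl\in\Z^2\,:\,\ell+\widetilde\omega_0\widetilde\ell\equiv 0\pmod{\alpha_0}\}$.

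For the normalization assertion, $\overline\calS_{\bf 0}$ is automatically normal as the saturation of a rational cone against a lattice, so it suffices to check that the group generated by $\calS_{\bf 0}$ coincides with that generated by $\overline\calS_{\bf 0}$, namely $L$, and that every $\frl\in\overline\calS_{\bf 0}$ admits a positive multiple in $\calS_{\bf 0}$. Both follow from a single observation: for $n=\mathrm{lcm}(\alpha_1,\dots,\alpha_d,\widetilde\alpha_1,\dots,\widetilde\alpha_{\widetilde d})$, the fractional parts $\{-\omega_i n\ell/\alpha_i\}$ and $\{-\widetilde\omega_j n\widetilde\ell/\widetilde\alpha_j\}$ all vanish, so $N_{\bf 0}(n\frl)$ and $\widetilde N_{\bf 0}(n\frl)$ collapse to the linear forms $n(-e\ell-\widetilde\ell/\alpha_0)\ge 0$ and $n(-\ell/\alpha_0-\widetilde e\widetilde\ell)\ge 0$, valid throughout $\calC^{orb}$; hence $n\cdot\overline\calS_{\bf 0}\subset\calS_{\bf 0}$. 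Finite generation of $\calS_{\bf 0}$ itself then follows by a stratification argument: the fractional corrections depend only on residue classes modulo $\alpha=\mathrm{lcm}(\alpha_i)$ and $\widetilde\alpha=\mathrm{lcm}(\widetilde\alpha_j)$, so on each of the finitely many cosets of $nL$ inside $L$ the defining inequalities become purely linear, and one assembles a finite generating set of $\calS_{\bf 0}$ from generators of $n\cdot\overline\calS_{\bf 0}$ together with a finite set of coset representatives.

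For part (2), the same super-additivity gives the $\calS_{\bf 0}$-module structure on $\calS_{\bf c}$: for $\frl\in\calS_{\bf 0}$ and $\frl'\in\calS_{\bf c}$, splitting $c_i-\omega_i(\ell+\ell')=(-\omega_i\ell)+(c_i-\omega_i\ell')$ inside the floor yields $N_{\bf c}(\frl+\frl')\ge N_{\bf 0}(\frl)+N_{\bf c}(\frl')\ge 0$, analogously for $\widetilde N_{\bf c}$, while the congruence stays in class $\overline c$. The inclusion $\calS_{\bf c}\subset\overline\calS_{\bf c}$ follows from $\lfloor x\rfloor\le x$ combined with the identity $(A,\widetilde A)^{T}=(-I^{orb})(c,\widetilde c)^{T}$ from (\ref{eq:AA}): it yields $N_{\bf c}(\frl)\le -e(\ell+c)-(\widetilde\ell+\widetilde c)/\alpha_0$, precisely the first component of $-I^{orb}(\ell+c,\widetilde\ell+\widetilde c)^{T}$, and analogously for $\widetilde N_{\bf c}$. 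Finite generation of $\overline\calS_{\bf c}$ as an $\overline\calS_{\bf 0}$-module is classical — the lattice points of a shifted pointed rational cone decompose into finitely many translates by the normal monoid — and a further residue-class stratification as above upgrades this to finite generation of $\calS_{\bf c}$ over $\calS_{\bf 0}$.

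The main obstacle is the nonlinearity of the defining inequalities of $\calS_{\bf 0}$ and $\calS_{\bf c}$ caused by the floor functions, which prevents a direct application of Gordan's lemma. The residue-class stratification resolves this because the fractional corrections are bounded and periodic in $\ell,\widetilde\ell$ modulo the relevant $\mathrm{lcm}$ of the Seifert $\alpha$-invariants, reducing each stratum to a linear affine-monoid problem handled by the classical theory of affine monoids and their finitely generated modules (cf.\ \cite{BG}).
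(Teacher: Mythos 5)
Your overall strategy matches what the paper leaves unstated (the paper simply declares (1) elementary and cites \cite{BG} for the module finiteness in (2)), and most of the details you supply check out: super-additivity of $\lfloor\cdot\rfloor$ correctly gives both the monoid structure on $\calS_{\bf 0}$ and the $\calS_{\bf 0}$-module structure on $\calS_\bc$; the bound $\lfloor x\rfloor\le x$ together with (\ref{eq:AA}) correctly gives $\calS_\bc\subset\overline\calS_\bc$; and taking $n=\mathrm{lcm}(\alpha_i,\widetilde\alpha_j)$ correctly yields $n\cdot\overline\calS_{\bf 0}\subset\calS_{\bf 0}$.

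There is, however, a gap in the normalization step. Writing $\Lambda:=\Z^2\cap(\equiv_{\bf 0})$ for the rank-two sublattice cut out by the congruence (so $\overline\calS_{\bf 0}=\calC^{orb}\cap\Lambda$), you correctly isolate the two conditions to be checked --- that $\Z\calS_{\bf 0}=\Z\overline\calS_{\bf 0}=\Lambda$, and that every $\frl\in\overline\calS_{\bf 0}$ has a positive multiple in $\calS_{\bf 0}$ --- but then assert that both follow from $n\cdot\overline\calS_{\bf 0}\subset\calS_{\bf 0}$. Only the second does. The inclusion $n\cdot\overline\calS_{\bf 0}\subset\calS_{\bf 0}$ gives merely $n\Lambda\subset\Z\calS_{\bf 0}\subset\Lambda$, which still permits $\Z\calS_{\bf 0}$ to be a proper finite-index subgroup of $\Lambda$; in that case some elements of $\overline\calS_{\bf 0}$ would not lie in the difference group of $\calS_{\bf 0}$, and $\overline\calS_{\bf 0}$ could not be its normalization. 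To close the gap, fix $\frv$ in the interior of $\calC^{orb}$ with $\frv\in\Lambda$ and $N_{\bf 0}(\frv)>0$, $\widetilde N_{\bf 0}(\frv)>0$; such $\frv$ exists because $N_{\bf 0}$ differs from the linear form $-e\ell-\widetilde\ell/\alpha_0$ by a bounded amount while that form grows along interior rays. For arbitrary $x\in\Lambda$, super-additivity gives $N_{\bf 0}(x+m\frv)\ge N_{\bf 0}(x)+mN_{\bf 0}(\frv)>0$ for $m\gg 0$, similarly for $\widetilde N_{\bf 0}$, and the congruence is automatic; hence both $x+m\frv$ and $m\frv$ lie in $\calS_{\bf 0}$, so $x\in\Z\calS_{\bf 0}$ and $\Z\calS_{\bf 0}=\Lambda$. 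With this inserted, the rest of your argument goes through.
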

\begin{proof} (1) is elementary. By Corollary \cite[2.12]{BG}
 $\overline \calS_\bc$ is finitely generated over $\overline\calS_0$, but
$\overline \calS_0$ itself  is finitely generated as an $\calS_0$ module.
\end{proof}
\begin{lemma}\labelpar{lem:VV} There exists $\frv_1$ and $\frv_2$ elements of $\Z^2$ with the following properties:

(a) $\frv_1$ and $\frv_2$ belong to $\calS_{{\bf 0}}$ and
 $\R_{\geq 0}\frv_1+\R_{\geq 0}\frv_2=\calC^{orb}$.

 (b) For any $\frl\in\overline{\calS}_{{\bf c}}$ one has: ($i$) $N_{{\bf c}}(\frl+\frv_1)=
 N_{{\bf c}}(\frl)$; ($ii$)   $N_{{\bf c}}(\frl+\frv_2)\geq 0$; ($\widetilde{i}$)
 $\widetilde{N}_{{\bf c}}(\frl+\frv_2)=\widetilde{N}_{{\bf c}}(\frl)$;
 and ($\widetilde{ii}$) $\widetilde{N}_{{\bf c}}(\frl+\frv_1)\geq 0$.
\end{lemma}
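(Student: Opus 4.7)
The plan is to construct $\frv_1$ and $\frv_2$ as large enough positive integer vectors lying on the two extremal rays of $\calC^{orb}$. Since $I^{orb}$ has rational entries, these rays
\[
R_1 := \{\,\frl \in \R^2 \,:\, -e\ell - \widetilde\ell/\alpha_0 = 0\,\}, \qquad R_2 := \{\,\frl \in \R^2 \,:\, -\ell/\alpha_0 - \widetilde e\widetilde\ell = 0\,\}
\]
are rational, lie in the closed first quadrant (as $-e,-\widetilde e>0$), and satisfy $\calC^{orb} = \R_{\geq 0} R_1 + \R_{\geq 0} R_2$; the equality in (a) will then be immediate from the construction.

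The crucial observation, already implicit in the paragraph preceding the lemma, is that if the floor symbols in $N_\bc$ and $\widetilde N_\bc$ are replaced by their arguments one obtains affine forms $N_\bc^{\mathrm{lin}}, \widetilde N_\bc^{\mathrm{lin}}$ whose linear parts in $\frl$ are exactly the two components of $-I^{orb}\frl$. Membership $\frl\in\overline\calS_\bc$ is therefore equivalent to $N_\bc^{\mathrm{lin}}(\frl), \widetilde N_\bc^{\mathrm{lin}}(\frl) \geq 0$, and the linear part of $N$ (respectively $\widetilde N$) vanishes on $R_1$ (respectively $R_2$). I would then set $\frv_1 := (v,\,-e\alpha_0 v)$ for $v$ a positive multiple of $\alpha_0 \cdot \mathrm{lcm}(\alpha_1,\dots,\alpha_d)$. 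This single choice simultaneously (i) makes the second coordinate $-e\alpha_0 v$ an integer, (ii) forces $\alpha_i \mid v$ for every $i$, so that every floor term $\lfloor (c_i-\omega_i\ell)/\alpha_i\rfloor$ of $N_\bc$ shifts by an integer under $\ell\mapsto\ell+v$, and (iii) yields the congruence $\equiv_{\bf 0}$ since $\alpha_0 \mid v$ implies $\alpha_0 \mid -e\alpha_0 v$. Combined with $N_\bc^{\mathrm{lin}}$ vanishing on $R_1$, this gives $N_\bc(\frl+\frv_1) = N_\bc(\frl)$ identically in $\bc$ and $\frl$, which is (b)($i$); a symmetric prescription $\frv_2 := (-\widetilde e\alpha_0 w,\,w)$ with $w$ a positive multiple of $\alpha_0 \cdot \mathrm{lcm}(\widetilde\alpha_1,\dots,\widetilde\alpha_{\widetilde d})$ delivers (b)($\widetilde i$).

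For the remaining two inequalities, the linear increment of $\widetilde N$ along $R_1$ at $\frv_1$ equals $v(-1/\alpha_0 + e\widetilde e\alpha_0) = \varepsilon\alpha_0 v$, strictly positive because $\varepsilon = e\widetilde e - 1/\alpha_0^2 > 0$, while the floor defect $\widetilde N_\bc^{\mathrm{lin}} - \widetilde N_\bc$ takes values in $[0,\widetilde d)$ uniformly in $\frl$ and $\bc$. Hence for every $\frl \in \overline\calS_\bc$,
\[
\widetilde N_\bc(\frl+\frv_1) \;>\; \widetilde N_\bc^{\mathrm{lin}}(\frl) + \varepsilon\alpha_0 v - \widetilde d,
\]
which is non-negative as soon as $v \geq \widetilde d/(\varepsilon\alpha_0)$; the symmetric bound $w \geq d/(\varepsilon\alpha_0)$ gives (b)($ii$). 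The identical linear-dominates-defect reasoning applied with $\bc = {\bf 0}$ and $\frl = 0$ shows $N_{\bf 0}(\frv_k), \widetilde N_{\bf 0}(\frv_k) \geq 0$, placing $\frv_1,\frv_2 \in \calS_{\bf 0}$ and completing (a). The step I would expect to be the main (if mild) obstacle is reconciling the four simultaneous requirements on each $\frv_k$—integrality of both coordinates, the divisibilities enforcing floor invariance, the congruence $\equiv_{\bf 0}$, and being large enough to dominate the uniform floor defect—but choosing $v$ (respectively $w$) as a sufficiently large multiple of $\alpha_0 \cdot \mathrm{lcm}$ of the appropriate Seifert denominators accomplishes all of them with one stroke.
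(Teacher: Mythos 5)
Your proposal is correct and follows essentially the same approach as the paper: both pick $\frv_1,\frv_2$ on the two extremal rays of $\calC^{orb}$, enforce floor-compatibility by taking the abscissa of $\frv_1$ (resp.\ the ordinate of $\frv_2$) to be a multiple of $\mathrm{lcm}(\alpha_i)$ (resp.\ $\mathrm{lcm}(\widetilde\alpha_j)$), impose the congruence $\equiv_{\bf 0}$, and then obtain the two inequalities ($ii$), ($\widetilde{ii}$) by noting that the linear part of $\widetilde N$ (resp.\ $N$) grows at rate $\varepsilon\alpha_0>0$ along $R_1$ (resp.\ $R_2$) while the floor defect is uniformly bounded by $\widetilde d$ (resp.\ $d$). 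The only difference is cosmetic: the paper states the requirements (A), (B) abstractly and appeals to $N_\bc(\frl+\frv_2)\geq N_\bc(\frl)+N_{\bf 0}(\frv_2)$ and $N_\bc(\frl)\geq-(d-1)$, whereas you produce the vectors explicitly and bound the defect directly, which amounts to the same estimate.
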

\begin{proof} We choose

(A)  $\frv_1=(\ell_1,\well_1)\in \Z^2\cap (\equiv _{{\bf 0}})$
such that $\{-\omega_i\ell_1/\alpha_i\}=0$ for all $i$, and  $N_{{\bf 0}} (\frv_1)=0$;

(B) $\frv_2=(\ell_2,\well_2)\in \Z^2\cap (\equiv _{{\bf 0}})$
such that $\{-\widetilde{\omega}_j\widetilde{\ell}_2/\widetilde{\alpha}_j\}=0$
for all $j$, and $\widetilde{N}_{{\bf 0}} (\frv_2)=0$.

\noindent  Then $\frv_1$ and $\frv_2$ satisfy (a), and (b)($i$), and (b)($\widetilde{i}$).
Furthermore,
note that $N_{{\bf c}}(\frl+\frv_2)\geq  N_{{\bf c}}(\frl)+N_{{\bf 0}}(\frv_2)$
and for any  $\frl\in\overline{\calS}_{{\bf c}}$
one has $N_{{\bf c}}(\frl)\geq -(d-1)$,
Hence, if we also assume  $\widetilde N_{{\bf 0}}(\frv_1)\geq \widetilde d-1$
  and $N_{{\bf 0}}(\frv_2)\geq d-1$, then all the conditions will be satisfied.
\end{proof}
Usually, the `universal restrictions'  $\widetilde N_{{\bf 0}}(\frv_1)\geq \widetilde d-1$
  and $N_{{\bf 0}}(\frv_2)\geq d-1$  in the proof of
  Lemma \ref{lem:VV}  provide  rather `large' vectors  $\frv_1$ and $\frv_2$. Nevertheless,
usually much smaller vectors also satisfy (a) and (b). Here is another choice.
Besides (A) and (B) we impose the following:

(C) Let $\Box=\Box(\frv_1,\frv_2)=
 \left\{ \frl=q_1\frv_1+q_2\frv_2 \ : \ 0\leq q_1,q_2<1 \right\}$ be
  the semi-open cube in $\calC^{orb}$. Then we require $N_{{\bf 0}}(\frv_2)\geq 0$ and
  $N_{{\bf c}}(\frl_\Box+\frv_2)\geq 0$ for any
  $\frl_\Box\in (\Box-(c,\widetilde c))\cap\Z^2\cap (\equiv_\bc)$; and symmetrically:
  $\widetilde N_{{\bf 0}}(\frv_1)\geq 0$ and
  $\widetilde N_{{\bf c}}(\frl_\Box+\frv_1)\geq 0$ for any
   $\frl_\Box\in (\Box-(c,\widetilde c))\cap\Z^2\cap (\equiv_\bc)$.

The wished inequality for any $\frl\in\overline{\calS}_{{\bf c}}$ then follows from
$N_{{\bf c}}(\frl_\Box +k_1\frv_1+k_2\frv_2+\frv_2)=N_{{\bf c}}(\frl_\Box +k_2\frv_2+\frv_2)\geq
N_{{\bf c}}(\frl_\Box +\frv_2)+k_2N_{{\bf 0}}(\frv_2)$ (and its symmetric version).

\vspace{2mm}

In the sequel the next two subsets of  $\overline{\calS}_{{\bf c}}$ will be crucial.
\begin{eqnarray*}
\calS^-_{{\bf c},1}
:=\left\{ \frl\in (\Box-(c,\widetilde c))\cap\Z^2\cap (\equiv_\bc) \ : \ N_\bc(\frl)<0\right\}\ , \\
\calS^-_{{\bf c},2}:=
\left\{ \frl\in (\Box-(c,\widetilde{c}))\cap\Z^2\cap (\equiv_\bc) \ : \ \widetilde N_\bc(\frl)
<0\right\}.
\end{eqnarray*}
Again, both sets $\calS^-_{{\bf c},1}$ and $\calS^-_{{\bf c},2}$ are independent of the choice of ${\bf c}$, they
depend only on $h$.

\begin{proposition}\labelpar{prop:str} With the above notations one has
\begin{eqnarray*}
 &(1)&  \overline\calS_\bc= \bigsqcup_{\frl\in (\Box-(c,\widetilde{c}))\cap \Z^2\cap (\equiv_\bc)} \frl+\Z_{\geq 0}\frv_1+\Z_{\geq 0}\frv_2 \\
 &(2)&  \overline\calS_\bc\setminus \calS_\bc = \big(\bigsqcup_{\frl\,\in \calS^-_{\bc,1}}
 \frl+\Z_{\geq 0}\frv_1\big) \ \bigcup\ \big(\bigsqcup_{\frl\,\in \calS^-_{\bc,2}}
 \frl+\Z_{\geq 0}\frv_2\big), \\
 & &\mbox{where} \ \ \big(\bigsqcup_{\frl\,\in \calS^-_{\bc,1}}
 \frl+\Z_{\geq 0}\frv_1\big) \ \bigcap\ \big(\bigsqcup_{\frl\,\in \calS^-_{\bc,2}}
 \frl+\Z_{\geq 0}\frv_2\big)= \bigsqcup_{\frl\,\in \calS^-_{\bc,1} \cap \calS^-_{\bc,2}} \frl \ .
\end{eqnarray*}
\end{proposition}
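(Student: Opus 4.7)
The plan is to prove (1) via the standard fundamental--parallelepiped argument for the normal affine monoid $\overline\calS_{\bf 0}$ and its finitely generated module $\overline\calS_\bc$. By Lemma \ref{lem:VV}(a), the vectors $\frv_1$ and $\frv_2$ are $\R$--linearly independent generators of the two--dimensional cone $\calC^{orb}$, and by the choices (A), (B) made in the proof of that lemma they both belong to the congruence class $\equiv_{\bf 0}$. Given $\frl\in\overline\calS_\bc$, one writes $\frl+(c,\widetilde c)=q_1\frv_1+q_2\frv_2$ uniquely with $q_1,q_2\in\R_{\geq 0}$, and sets $k_i:=\lfloor q_i\rfloor\in\Z_{\geq 0}$, $\frl_\Box:=\frl-k_1\frv_1-k_2\frv_2$. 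Then $\frl_\Box+(c,\widetilde c)\in\Box$ by construction, $\frl_\Box\in\Z^2$ since $\frl$ and the $k_i\frv_i$ are integral, and $\frl_\Box$ satisfies $\equiv_\bc$ because $\frl$ does and $\frv_1,\frv_2\in\,\equiv_{\bf 0}$. Uniqueness of the triple $(\frl_\Box,k_1,k_2)$ follows from the linear independence of $\frv_1,\frv_2$, which yields the disjoint--union form of~(1).

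For (2) the key input is the additivity behaviour of $N_\bc$ and $\widetilde N_\bc$ along translations by $\frv_1,\frv_2$, as extracted in the proof of Lemma \ref{lem:VV}. The integrality condition (A) together with the normalisation $N_{\bf 0}(\frv_1)=0$ forces the identity $N_\bc(\frl+\frv_1)=N_\bc(\frl)$ for every $\frl$, while the inequality $N_\bc(\frl+\frv_2)\geq N_\bc(\frl)+N_{\bf 0}(\frv_2)$ can be made effective via $N_{\bf 0}(\frv_2)\geq d-1$; the symmetric statements hold for $\widetilde N_\bc$ with the roles of $\frv_1,\frv_2$ swapped. Writing a general $\frl\in\overline\calS_\bc$ in its unique form from (1) and iterating, one obtains $N_\bc(\frl)=N_\bc(\frl_\Box+k_2\frv_2)$, which equals $N_\bc(\frl_\Box)$ when $k_2=0$ and is $\geq 0$ as soon as $k_2\geq 1$ (using $N_\bc(\frl_\Box)\geq -(d-1)$ since $\frl_\Box\in\overline\calS_\bc$). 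Symmetrically, $\widetilde N_\bc(\frl)=\widetilde N_\bc(\frl_\Box)$ when $k_1=0$ and is $\geq 0$ when $k_1\geq 1$.

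Consequently, $\frl\in\overline\calS_\bc\setminus\calS_\bc$ splits into two possibly overlapping cases: either $k_2=0$ with $\frl_\Box\in\calS^-_{\bc,1}$ (the $N_\bc<0$ defect), which produces the orbit $\frl_\Box+\Z_{\geq 0}\frv_1$; or $k_1=0$ with $\frl_\Box\in\calS^-_{\bc,2}$ (the $\widetilde N_\bc<0$ defect), producing $\frl_\Box+\Z_{\geq 0}\frv_2$. The intersection of the two families forces both $k_1=0$ and $k_2=0$, together with $\frl_\Box\in\calS^-_{\bc,1}\cap\calS^-_{\bc,2}$, producing precisely the claimed $\bigsqcup_{\frl_\Box\,\in\,\calS^-_{\bc,1}\cap\calS^-_{\bc,2}}\frl_\Box$. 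Disjointness of the orbits within each family is inherited from~(1).

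The main obstacle is essentially bookkeeping: one must track how the congruence $\equiv_\bc$ survives the decomposition of (1), and invoke the precise integrality conditions (A), (B) underlying Lemma \ref{lem:VV} to be certain that $N_\bc$ and $\widetilde N_\bc$ factor through the $\frv_2$-- and $\frv_1$--coordinates respectively. Once those two facts are in hand, parts (1) and (2) are two sides of the same normal--form argument, and no further geometric or combinatorial ingredient is needed.
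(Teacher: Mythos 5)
Your proof is correct and takes essentially the same route as the paper's, which dispatches the statement in a single line by invoking the properties (a)--(b) of Lemma~\ref{lem:VV} together with the structure theory of modules over affine monoids. You have simply expanded the fundamental--parallelepiped normal form that the paper leaves implicit, and traced the invariance and nonnegativity of $N_\bc$ and $\widetilde N_\bc$ along $\frv_1$-- and $\frv_2$--translates exactly as property (b) of that lemma is designed to supply; the only cosmetic remark is that in the nonnegativity step you re-derive (b)(\emph{ii}) from the universal bound $N_{\bf 0}(\frv_2)\geq d-1$, whereas citing Lemma~\ref{lem:VV}(b)(\emph{ii}) directly is cleaner and remains valid also under the weaker alternative choice (C) of $\frv_1,\frv_2$.
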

\begin{proof} The statements follow from the choice of $\frv_1$ and $\frv_2$ and the above properties (a) and (b).
Compare also with the structure theorem  \cite[4.36]{BG} of $\calS_{{\bf 0}}$ modules. \end{proof}

\subsection{The periodic constant and the SW invariant in the equivariant case. } \

\vspace{2mm}

Set $\bt=(t,\wtt)$.
Using (\ref{eq:ZHTT}) and Proposition \ref{prop:str}
one can write $Z_h(\bt)/\bt^{(c,\widetilde c)}$ in the next form:
$$\sum_{\frl\in (\Box-(c,\widetilde c))\cap \Z^2\cap (\equiv_\bc)}
\frac{\bt^\frl}{(1-\bt^{\frv_1})(1-\bt^{\frv_2})}
-\sum_{\frl\in \calS^-_{\bc,1}}\frac{\bt^\frl}{1-\bt^{\frv_1}} -\sum_{\frl\in \calS^-_{\bc,2}}
\frac{\bt^\frl}{1-\bt^{\frv_2}}+\sum_{\frl\in \calS^-_{\bc,1}\cap \calS^-_{\bc,2}}\bt^\frl.$$

Next, we apply the decomposition established in subsection \ref{ss:POLPART}.
Here it is important to {\it choose ${\bf c}$ in such a way that
$c\in[0,1)$ and $\widetilde c\in[0,1)$}.

Note that $\frv_1\in\R_{>0}(1/\alpha_0,-e)$ and $\frv_2\in\R_{>0}(-\widetilde e,1/\alpha_0)$,
hence $\frv_2$ sits in the cone determined by $\frv_1$ and $(1,0)$. Then, as in \ref{ss:POLPART},
we set $\Xi_1:=\{(\ell,\widetilde \ell)\,:\, 0\leq \ell < \mbox{first coordinate of $\frv_1$}\}$
and $\Xi_2:=\{(\ell,\widetilde \ell)\,:\, 0\leq \widetilde \ell < \mbox{second  coordinate of $\frv_2$}\}$, and for any $\frl\in \calS^-_{\bc,i}$  the unique
$n_{\frl,i}$ such that $\frl-n_{\frl,i}\frv_i\in\Xi_i$, $i=1,2$.
Then subsection \ref{ss:POLPART} provides the following decomposition
\begin{equation*}\label{eq:2nZ+}
\begin{array}{l}
 Z^+_h(\bt)=\bt^{(c,\widetilde c)}
 \left( \sum_{\frl\in \calS^-_{\bc,1}}\sum_{j=1}^{n_{\frl,1}}\bt^{\frl-j\frv_1} +
 \sum_{\frl\in \calS^-_{\bc,2}}
\sum_{j=1}^{n_{\frl,2}}\bt^{\frl-j\frv_2}+\sum_{\frl\in \calS^-_{\bc,1}\cap
\calS^-_{\bc,2}}\bt^\frl
\right) \ \\
Z^-_h(\bt)=\bt^{(c,\widetilde c)} \left( \sum_{\frl\in (\Box-(c,\widetilde c))\cap \Z^2\cap (\equiv_\bc)}
\frac{\bt^\frl}{(1-\bt^{\frv_1})(1-\bt^{\frv_2})} -\sum_{\frl\in \calS^-_{\bc,1}}\frac{\bt^{\frl-n_{\frl,1}\frv_1}}{1-\bt^{\frv_1}}
-\sum_{\frl\in \calS^-_{\bc,2}}\frac{\bt^{\frl-n_{\frl,2}\frv_2}}{1-\bt^{\frv_2}} \right) \ .
\end{array}
\end{equation*}
Therefore, by \ref{rem:fh} and Theorem \ref{th:POLPART} we get
$${\rm pc}^{\calC^{orb}}_h(Z)=
{\rm pc}^{\calC^{orb}}(Z_h/\bt^{(c,\widetilde c)})=Z^+_h(1,1)=
\sum_{\frl\in \calS^-_{\bc,1}}n_{\frl,1}+
\sum_{\frl\in \calS^-_{\bc,2}}n_{\frl,2}+
|\calS^-_{\bc,1} \cap \calS^-_{\bc,2}| \ .$$

\begin{corollary} Choose  ${\bf c}$ in such a way that
$c\in[0,1)$ and $\widetilde c\in[0,1)$. Then one has the following combinatorial formula for the
  normalized Seiberg--Witten invariant of $M$
 \begin{equation*}
-\frac{(K+2r_h)^2+|\cV|}{8}-\frsw_{-h*\sigma_{can}}(M)=
\sum_{\frl\in \calS^-_{\bc,1}}n_{\frl,1}+
\sum_{\frl\in \calS^-_{\bc,2}}n_{\frl,2}+
|\calS^-_{\bc,1} \cap \calS^-_{\bc,2}|.
\end{equation*}
\end{corollary}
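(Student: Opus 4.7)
The plan is to glue together three ingredients already developed in the paper: the topological reading of the Seiberg--Witten invariant as an equivariant multivariable periodic constant of $Z(\bt)$ (Corollary \ref{cor:4.1}), the Reduction Theorem \ref{th:REST} which collapses the many variables of $Z$ to the two node variables $(t,\wtt)$, and the explicit decomposition of $Z_h(t,\wtt)$ into a ``polynomial part'' $Z_h^+$ and a ``negative degree part'' $Z_h^-$ obtained just above the corollary from Theorem \ref{th:POLPART} and the structural Proposition \ref{prop:str}. The content of the corollary is then the identification of the left--hand side, which is a topological invariant, with $Z_h^+(1,1)$, which is the combinatorial right--hand side.

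First I would invoke Corollary \ref{cor:4.1}(b): for any chamber $\calC$ of the denominator of $Z(\bt)$ whose interior meets the interior of the Lipman cone,
\begin{equation*}
\mathrm{pc}_h^{\calC}(Z(\bt))=-\frac{(K+2r_h)^2+|\cV|}{8}-\frsw_{-h*\sigma_{can}}(M).
\end{equation*}
Next I would apply the Reduction Theorem \ref{th:REST}(c): setting $t_v=1$ for every non--node $v$ does not change the periodic constant, so $\mathrm{pc}_h^{\calC}(Z(\bt))=\mathrm{pc}_h^{\calC_\calN}(Z_h(t,\wtt))$ with $\calC_\calN=\calC\cap\calS_\calN$. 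The key geometric point is that, via the isomorphism $pr_\calN:V_\calN\to L_\calN\otimes\R$ of Lemma \ref{lem:RES}, the image of $\calS_\calN=\R_{\geq 0}\langle E_0^*,\widetilde E_0^*\rangle$ is exactly the cone $\calC^{orb}$: indeed, from (\ref{eq:DET2node}) the node--coordinates of $E_0^*,\widetilde E_0^*$ are proportional to $(-\widetilde e,1/\alpha_0)$ and $(1/\alpha_0,-e)$ respectively, i.e. to the generators $\frv_2,\frv_1$ of $\calC^{orb}$. Hence $\calC^{orb}$ is itself a chamber of the reduced denominator (the exponents $-(E_i^*,E_0^*),-(E_i^*,\widetilde E_0^*)$ and their twiddled counterparts group into just these two rays), and Proposition \ref{prop:pc}(c) gives $\mathrm{pc}_h^{\calC_\calN}(Z_h(t,\wtt))=\mathrm{pc}_h^{\calC^{orb}}(Z_h(t,\wtt))$.

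The third step is to compute $\mathrm{pc}_h^{\calC^{orb}}(Z_h(t,\wtt))$. By the choice $c,\widetilde c\in[0,1)$ the shift $\bt^{(c,\widetilde c)}$ plays the role of $\bt^{r_h}$ in Remark \ref{rem:fh}, so
\begin{equation*}
\mathrm{pc}_h^{\calC^{orb}}(Z_h(t,\wtt))=\mathrm{pc}^{\calC^{orb}}\bigl(Z_h(t,\wtt)/\bt^{(c,\widetilde c)}\bigr)=Z_h^+(1,1),
\end{equation*}
where the second equality is the content of Theorem \ref{th:POLPART}(3): the $Z_h^-$ piece is a combination of the two basic forms treated in Examples \ref{ex:NAM}--\ref{ex:2}(b) whose $\mathrm{pc}^{\calC^{orb}}$ vanishes, so only the polynomial part $Z_h^+$ contributes, and its value at $\bt=(1,1)$ is the required periodic constant. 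Plugging into the explicit expression for $Z_h^+(t,\wtt)$ displayed above the corollary yields
\begin{equation*}
Z_h^+(1,1)=\sum_{\frl\in\calS^-_{\bc,1}}n_{\frl,1}+\sum_{\frl\in\calS^-_{\bc,2}}n_{\frl,2}+|\calS^-_{\bc,1}\cap\calS^-_{\bc,2}|,
\end{equation*}
which matches the right--hand side of the corollary. The main obstacle is really the identification carried out in the second paragraph: one must verify that $\calC^{orb}$ is a legitimate chamber of the reduced two--variable denominator and that its interior meets the projected Lipman cone, so that the chain of equalities of periodic constants is justified by Proposition \ref{prop:pc}(c) and Theorem \ref{th:REST}(c); once the geometry of $\calS_\calN$ in terms of $I^{orb}$ is set up, this is straightforward from (\ref{eq:DET2node}) but is the only non--formal ingredient.
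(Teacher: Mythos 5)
Your proposal is correct and takes essentially the same route as the paper's own (very terse) proof, which reads "Use Corollary \ref{cor:4.1}, the reduction Theorem \ref{th:REST} and the above computation." You have expanded the implicit steps — in particular, verifying via (\ref{eq:DET2node}) that the node coordinates of $E_0^*$ and $\widetilde E_0^*$ are proportional to the two rays $(-\widetilde e,1/\alpha_0)$ and $(1/\alpha_0,-e)$, so that the projected cone $\calS_\calN$ is exactly $\calC^{orb}$ and is a single chamber for the reduced two--variable denominator — which is precisely the point the paper leaves to the reader.
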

\begin{proof}
Use Corollary \ref{cor:4.1}, the reduction Theorem \ref{th:REST} and the above computation.
\end{proof}

\begin{example} Consider the following plumbing graph
\vspace{0.5cm}
\begin{center}
\begin{picture}(140,60)(80,15)
\put(110,40){\circle*{3}}
\put(140,40){\circle*{3}}
\put(170,40){\circle*{3}}
\put(200,40){\circle*{3}}
\put(80,60){\circle*{3}}
\put(80,20){\circle*{3}}
\put(200,60){\circle*{3}}
\put(200,20){\circle*{3}}
\put(110,40){\line(1,0){90}}
\put(110,40){\line(-3,2){30}}
\put(110,40){\line(-3,-2){30}}
\put(170,40){\line(3,2){30}}
\put(170,40){\line(3,-2){30}}
\put(30,20){\makebox(0,0){$E_2$}}
\put(30,60){\makebox(0,0){$E_1$}}
\put(250,40){\makebox(0,0){$\widetilde E_2$}}
\put(250,60){\makebox(0,0){$\widetilde E_1$}}
\put(250,20){\makebox(0,0){$\widetilde E_3$}}
\put(85,65){\makebox(0,0){$-2$}}
\put(85,15){\makebox(0,0){$-3$}}
\put(210,35){\makebox(0,0){$-5$}}
\put(190,65){\makebox(0,0){$-5$}}
\put(190,15){\makebox(0,0){$-5$}}
\put(115,50){\makebox(0,0){$-1$}}
\put(140,50){\makebox(0,0){$-9$}}
\put(165,50){\makebox(0,0){$-1$}}

\end{picture}
\end{center}
\vspace{0.5cm}

\noindent
The corresponding Seifert invariants are  $\alpha_1=2$, $\alpha_2=3$, $\widetilde{\alpha}_j=5$, $\alpha_0=9$ and
$\omega_i=\widetilde\omega_j=\omega_0=\widetilde\omega_0=1$ for all $i$ and  $j$.
Hence  $e=-1/18$, $\widetilde e=-13/45$ and $\varepsilon=1/(3^3\cdot 10)$.
For $h=0$ we choose $\bc=0$. Then
\begin{equation*}
\calS_0=\left\{
\begin{array}{l} (\ell,\well)\in \Z^2 \\
 8\ell - \well +9\cdot([\frac{-\ell}{2}]+[\frac{-\ell}{3}])\geq 0  \\
 8\well -\ell +27\cdot[\frac{-\well}{5}]\geq 0\\
 \ell+\well \equiv 0 \ (\mbox{mod} \ 9 )
\end{array}
\right\} \ \ \mbox{and} \ \ \
\overline\calS_0=\left\{
\begin{array}{l} (\ell,\well)\in \Z^2 \\
 \ell-2\well \geq 0 \\
 -5\ell+13\well \geq 0 \\
 \ell+\well \equiv 0 \ (\mbox{mod} \ 9 )
\end{array}
\right\} .
\end{equation*}
If we take the generators $\frv_1=(60,30)$ and $\frv_2=(26,10)$ (via conditions (A)-(B)-(C) following
Lemma \ref{lem:VV}), one can calculate explicitly the sets
\begin{equation*}
\calS^-_{{\bf 0},1}=\left\{
\begin{array}{l}
(13,5),(19,8),(25,11), (31,14),\\
(37,17),(43,20),(49,23),\\
(55,26),(61,29),(67,32)
\end{array}
\right\} \ \mbox{and} \ \
\calS^-_{{\bf 0},2}=\left\{
\begin{array}{l}
(6,3),(19,8),(12,6),\\
(25,11),(24,12),(37,17),\\
(42,21),(55,26)
\end{array}\right\} .
\end{equation*}
This generates the next counting function of
$\overline \calS_{{\bf 0}}\setminus \calS_{{\bf 0}}$, namely
$\sum_{(\ell, \well)\in \overline \calS_{{\bf 0}}\setminus \calS_{{\bf 0}}}
\ t^\ell \widetilde{t}^{\well}\ =$
\begin{equation*}
\begin{array}{l}
(t^{13}\wtt^5+t^{19}\wtt^8+t^{25}\wtt^{11}+t^{31}\wtt^{14}+t^{37}\wtt^{17}+
t^{43}\wtt^{20}+t^{49}\wtt^{23}+t^{55}\wtt^{26}+t^{61}\wtt^{29}+t^{67}\wtt^{32})/(1-t^{60}\wtt^{30})+\\
 + (t^6\wtt^3+t^{12}\wtt^6+t^{19}\wtt^8+t^{24}\wtt^{12}+t^{25}\wtt^{11}+t^{37}\wtt^{17}
 +t^{42}\wtt^{21}+t^{55}\wtt^{26})/
 (1-t^{26}\wtt^{10})-\\
 -t^{19}\wtt^8-t^{25}\wtt^{11}-t^{37}\wtt^{17}-t^{55}\wtt^{26} \ ,
\end{array}
\end{equation*}
which by \ref{eq:2nZ+} provides  $Z^+_0(t,\wtt)=t\wtt^{-1}+t^3\wtt^{2}+t^{-2}\wtt^2+t^{-1}\wtt+t^{11}\wtt^7+t^{16}\wtt^{11}+t^{-10}\wtt+
t^{29}\wtt^{16}+t^{3}\wtt^{6}+t^{19}\wtt^8+t^{25}\wtt^{11}+t^{37}\wtt^{17}+t^{55}\wtt^{26}$.
Hence ${\rm pc}^{\calC^{orb}}_0(Z)=Z^+_0(1,1)=13$.

[It can be verified that there exists a splice quotient type normal surface singularity whose
link is given by the above graph. It is a complete intersection in $(\C^4,0)$ with equations
$z^3+(y_2+2y_3)^2-y_1y_2(2y_2+3y_3)=y_1^5+(2y_2+3y_3)y_2y_3=0$.
\end{example}

\subsection{The periodic constant in the non-equivariant case and  $\lambda(M)$.}\labelpar{ss:NEC} \

\vspace{2mm}

Though the non-equivariant $ Z_{ne}$  can be obtained by the sum $\sum_hZ_h$ treated in the previous subsection, here we
provide a more direct procedure, which leads to a new formula.
Write $J:=(-I^{orb})^{-1}$ and $\bt^{\binom{a}{b}}$ for $t^a\wtt^{b}$.
Applying the reduction \ref{th:REST} for the definition
\ref{eq:INTR} of $Z$, we get
\begin{eqnarray*}
 Z_{ne}(\bt)=\frac{(1-\bt^{J\binom{1}{0}})^{d-1}(1-\bt^{J\binom{0}{1}})^{\widetilde d-1}}
{\prod_i(1-\bt^{J\binom{1/\alpha_i}{0}})\prod_j(1-\bt^{J\binom{0}{1/\widetilde \alpha_j}})}.
\end{eqnarray*}
Set
$S(x):= \sum_i x_i/\alpha_i$ and $\widetilde S(\widetilde x):= \sum_j \widetilde{x}_j/\widetilde{\alpha}_j$.
 Similarly as in \ref{eq:NE}, $Z_{ne}(\bt)$ can be written as
\begin{eqnarray*}
\sum_{0\leq x_i<\alpha_i, 0\leq i\leq d\atop 0\leq \widetilde x_j<\widetilde\alpha_j,
0\leq j \leq\widetilde d}f(x,\widetilde x), \ \ \mbox{where} \ \ f(x,\widetilde x)=
 \frac{\bt^{J\binom{S(x)}{\widetilde S(\widetilde x)}}}{(1-\bt^{J\binom{1}{0}})
(1-\bt^{J\binom{0}{1}})}.
\end{eqnarray*}
By the substitution $u_1=\bt^{J\binom{1}{0}}$ and $u_2 =\bt^{J\binom{0}{1}}$, $f(x,\widetilde x)$ transforms into  $u_1^{S(x)}u_2^{\widetilde S(\widetilde x)}/ (1-u_1)(1-u_2)$.
The division of this fraction (with remainder) is elementary, hence $f(x,\widetilde x)$ equals
$$
\bt^{J\binom{S_{rat}}{\widetilde S_{rat}}} \left( \sum_{n=0}^{S_{int}-1}
\sum_{k=0}^{\widetilde S_{int}-1}\bt^{J\binom{n}{k}}
-\sum_{k=0}^{S_{int}-1}\frac{\bt^{J\binom{k}{0}}}{1-\bt^{J\binom{0}{1}}} -\sum_{\widetilde k=0}^{\widetilde S_{int}-1}
\frac{\bt^{J\binom{0}{\widetilde k}}}{1-\bt^{J\binom{1}{0}}}+ \frac{1}{(1-\bt^{J\binom{1}{0}})
(1-\bt^{J\binom{0}{1}})}\right)\ ,$$
where $S_{int}:=\lfloor S(x)\rfloor$, $\widetilde S_{int}:=\lfloor \widetilde
S(\widetilde x)\rfloor$,
$S_{rat}:=\{S(x)\}$ and $\widetilde S_{rat}:=\{\widetilde S(\widetilde x)\}$.

Then, by \ref{lem:d2},
${\rm pc}^{\calC^{orb}}(\bt^{J\binom{S_{rat}}{\widetilde S_{rat}}}/ (1-\bt^{J\binom{1}{0}})
(1-\bt^{J\binom{0}{1}}))=0$. Moreover, \ref{ss:POLPART} gives a unique integer $s(k)\geq 0$
for $k\in \{0,\dots,S_{int}-1\}$ such that $\bt^{J\binom{k+S_{rat}}{-s(k)+\widetilde S_{rat}}}/1-\bt^{J\binom{0}{1}}$
has vanishing periodic constant with respect to $\calC^{orb}$. It turns out that $s(k)=\lfloor
-\widetilde e \alpha_0(k+S_{rat})+\widetilde S_{rat}\rfloor$.
Similarly $s(\widetilde k)=\lfloor -e\alpha_0(\widetilde k+
\widetilde S_{rat})+S_{rat}\rfloor$ in the case of
$\bt^{J \binom{-s(\widetilde k)+S_{rat}}{\widetilde k+
\widetilde S_{rat}}}/1-\bt^{J\binom{1}{0}}$.
%
Therefore, by \ref{th:POLPART}, for
\begin{equation*}
  \mathrm{pc}(Z_{ne})=
-\lambda(M)-\frd\cdot\frac{K^2+|\cV|}{8}+\sum_h\chi(r_h)
\end{equation*}
we get
\begin{equation*}
\sum_{0\leq x_i<\alpha_i, 0\leq i\leq d\atop 0\leq \widetilde x_j<\widetilde\alpha_j,
0\leq j \leq\widetilde d} \  \Big( S_{int}\widetilde S_{int} + \sum_{k=0}^{S_{int}-1}
 \lfloor -\widetilde e
\alpha_0(k+S_{rat})+\widetilde S_{rat}\rfloor
+ \sum_{\widetilde k=0}^{\widetilde S_{int}-1} \lfloor -e\alpha_0(\widetilde k+ \widetilde S_{rat})+S_{rat}\rfloor  \ \Big).
\end{equation*}

\subsection{Ehrhart theoretical interpretation.}
In general, 
in contrast with the one--node case \ref{ss:Ehr}, the direct determination
 of
the counting function of $Z_h(\bt)$, or equivalently, of the complete equivariant
 Ehrhart quasipolynomial associated with the corresponding polytope,  is rather hard.
Nevertheless, those coefficients which are relevant to us (e.g. those ones which
contain the information about the Seiberg--Witten invariants of the 3--manifold)
can be identified using  the right hand side of \ref{eq:SUM}.
The computation is more transparent
when $L'=L$. In  that case, the two-variable Ehrhart polynomial has degree $d+\widetilde d$,
and a specific $d+\widetilde d-2$ degree coefficient
 is exactly the normalized Seiberg--Witten invariant of the 3--manifold.
We will not provide here the formulae, since
this  identification will be established for any
negative definite plumbing graph with arbitrary number of nodes, see
section  \ref{s:Last}, where several other coefficients will be computed as well.

\section{Ehrhart theoretical interpretation of the SW invariant (the general case)}\labelpar{s:Last}

\subsection{}
Let $\Gamma$ be a negative definite plumbing graph, a connected tree as in \ref{ss:11}.
Let  $\calN$ and $\cale$ be  the set of
nodes and  end--vertices as above. We assume that $\calN\not=\emptyset$.
If  $\delta_n$ denotes the valency of a node $n$, then $|\cale|=
2+\sum_{n\in \calN}( \delta_n-2)$.

We consider the
matrix $J$ with entries  $J_{nm}:=-(E^*_n,E^*_m)$ for $n,m\in\calN$.
By (\ref{ss:11}) it is a principal minor of $-I^{-1}$ (with rows and columns corresponding to the nodes).

Another incarnation of the matrix $J$ already appeared in subsection \ref{ss:NEC},
as the negative of the  inverse of the orbifold intersection matrix.
Indeed, let for any $n\in\calN$ take that component of $\Gamma\setminus
\cup_{m\in\calN\setminus n}\{m\}$
which contains $n$. It is a star shaped graph,
let $e_n$ be its orbifold Euler number. Furthermore, for any
two nodes $n$ and $m$ which are connected by a chain, let $\alpha_{nm}$ be the determinant of
that chain (not including the nodes). Then define the orbifold intersection  matrix
(of size $|\calN|$)
as  $I^{orb}_{nn}=e_n$, $I^{orb}_{nm}=1/\alpha_{nm}$ if the two nodes $n\not=m$ are
connected by a chain,
and  $I^{orb}_{nm}=0$ otherwise; cf. \cite[4.1.4]{BN07} or \ref{ss:TNC}.
One can show (see \cite[4.1.4]{BN07}) that $I^{orb}$ is invertible, negative definite, and
$\det(-I^{orb})$ is the product of $\det(-I)$ with the determinants of all (maximal)
chains and legs of
$\Gamma$. This fact and  \ref{eq:DETsgr} imply that
$J=(-I^{orb})^{-1}$.

\subsection{The Ehrhart polynomial}
In the sequel we assume that $L=L'$, that is $H=0$.

By  \ref{ss:cor}, $P^{(l)}$ sits in $\R^{|\cale|}$. Moreover, by the
reduction theorem \ref{ss:REST}, we can take
 $l$ of the form $l=\sum_{n\in \calN} \lambda_n E_n^*$, from the subcone of the Lipman cone
 generated by $\{E^*_n\}_{n\in\calN}$.

Then \ref{ss:REST} guarantees that  the associated polytope is $P^{(l)}=\bigcup_{n\in \calN}
P_n^{(l_n)}$, $P_n^{(l_n)}$ depending only of the component $l_n=-(l,E_n^*)$. Note that the
coefficients $\{\lambda_n\}_n$ and the entries $\{l_n\}_n$ are connected exactly by the
transformation law
$\left(l_n\right)_n=J \left(\lambda_n\right)_n$.

Take any chamber $\calC$ such that $int(\calC \cap \calS)\neq \emptyset$, as in \ref{cor:4.1}.
Let $\widehat \calL^{\calC}(P,\calT,(\lambda_n)_n)$
be the Ehrhart quasipolynomial  $\calL^{\calC}(P,\calT,(l_n)_n)$, associated with the
 denominator of $Z$, after changing the variables to $(\lambda_n)_n$ via $\left(l_n\right)_n=J \left(\lambda_n\right)_n$. It is convenient to normalize the coefficient of $\prod_n \lambda_n^{m_n}$
 by a factor $\prod _n m_n!$, hence we write
$$\widehat \calL^{\calC}(P,\calT,(\lambda_n)_n)=\sum_{\sum_n m_n\leq |\cale|
\atop m_n\geq 0; \ n\in \calN}  \widehat \fra^{\calC}_{(m_n)_n}\prod_n\frac{\lambda_n^{m_n}}{m_n!},$$
for certain  periodic functions $\widehat \fra^{\calC}_{(m_n)_n}$ in variables $(\lambda_n)_n$.
By \ref{eq:SUM}, \ref{cor:Taylor} and \ref{th:REST}
\begin{equation}\label{eq:Ehrcoeff}
 \chi(\sum_{n\in \calN}\lambda_n E_n^*)+{\rm pc}^{\calS}(Z)=\Delta((\lambda_n)_n),
\end{equation}
where
$$\Delta((\lambda_n)_n)= \sum_{0\leq k_n\leq \delta_n-2\atop \forall n\in\calN}
(-1)^{\sum_n k_n}\prod_n \binom{\delta_n-2}{k_n} \
\widehat \calL^{\calC}(P,\calT,(\lambda_n-k_n)_n)=$$
$$\sum_{\sum_n m_n\leq |\cale| \atop m_n\geq 0; \ n\in \calN}\ \left(\sum_{0\leq p_n\leq m_n \atop
n\in\calN} (-1)^{\sum_n p_n}\cdot \prod_n\binom{m_n}{p_n}
\left( \sum_{k_n=0}^{\delta_n-2}(-1)^{k_n} \binom{\delta_n-2}{k_n} k_n^{p_n}\right)
\right)\cdot \widehat \fra^{\calC}_{(m_n)_n}\prod_n\frac{\lambda_n^{m_n-p_n}}{m_n!}.$$
On the other hand, since
$\chi(l)=-(K+l,l)/2$,  the left hand side of (\ref{eq:Ehrcoeff})
is the quadratic function
$$\sum_{n,m\in\calN}(J_{nm}/2) \lambda_n\lambda_m+\sum_{n\in\calN}(-(K,E_n^*)/2)\lambda_n+{\rm pc}^{\calS}(Z).$$
Now we identify these coefficients with those of $\Delta((\lambda_n)_n)$ above. The additional
ingredient is the combinatorial formula (\ref{eq:PSz}), which also shows that for the non--zero
summands one necessarily has $p_n\geq \delta_n-2$ for any $n$. One gets the following result.
%
\begin{theorem}\label{th:MAINLAST}
\begin{eqnarray*}
 \widehat \fra^{\calC}_{(\delta_n,(\delta_m-2)_{m\neq n})}&=&J_{nn}\\
 \widehat \fra^{\calC}_{(\delta_n-1,\delta_m-1,(\delta_q-2)_{q\neq n,m})}&=&J_{nm} \ \
\mbox{for $n\not=m$}\\
 \widehat \fra^{\calC}_{(\delta_n-1,(\delta_m-2)_{m\neq n})}&=&
 \textstyle {
 -\frac{1}{2}(K,E_n^*)+\frac{1}{2}
 \sum_{m\in \calN}(\delta_m-2)J_{nm}}\\
\end{eqnarray*}
\begin{equation*}
\widehat \fra^{\calC}_{(\delta_n-2)_n}={\rm pc}^{\calS}(Z)-
\textstyle {
\sum_{n\in \calN} \frac{(\delta_n-2)(K,E_n^*)}{4}+\sum_{n\in \calN}
\frac{(\delta_n-2)(3\delta_n-7)J_{nn}}{24}
+ \sum_{n,m\in \calN \atop m\neq n}\frac{(\delta_n-2)(\delta_m-2)J_{nm}}{8}.}
\end{equation*}
\end{theorem}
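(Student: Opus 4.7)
The plan is to prove all four identifications by matching coefficients in the polynomial identity (\ref{eq:Ehrcoeff}), proceeding from top to bottom degree. First, using $\chi(l)=-(l,l+K)/2$ and $J_{nm}=-(E_n^*,E_m^*)$, the left-hand side of (\ref{eq:Ehrcoeff}) becomes the quadratic polynomial
\[
\chi\Bigl(\sum_{n\in\calN}\lambda_nE_n^*\Bigr)+{\rm pc}^{\calS}(Z)
=\frac{1}{2}\sum_{n,m\in\calN}J_{nm}\lambda_n\lambda_m-\frac{1}{2}\sum_{n\in\calN}(K,E_n^*)\lambda_n+{\rm pc}^{\calS}(Z),
\]
giving target coefficients $J_{nn}/2$ of $\lambda_n^2$, $J_{nm}$ of $\lambda_n\lambda_m$ ($n\neq m$), $-(K,E_n^*)/2$ of $\lambda_n$, and ${\rm pc}^{\calS}(Z)$ of the constant term.

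The decisive simplification comes from (\ref{eq:PSz}): the innermost sum $\sum_{k_n}(-1)^{k_n}\binom{\delta_n-2}{k_n}k_n^{p_n}$ vanishes unless $p_n\geq\delta_n-2$. Writing $p_n=\delta_n-2+s_n$ with $s_n\geq 0$ and $m_n=p_n+q_n$ (where $q_n\geq 0$ is the exponent of $\lambda_n$ in the monomial under consideration), the degree bound $\sum_nm_n\leq |\cale|=2+\sum_n(\delta_n-2)$ becomes $\sum_ns_n+\sum_nq_n\leq 2$. Equivalently, $\Delta$ is the finite-difference operator $\prod_n(1-E_n^{-1})^{\delta_n-2}$ applied to $\widehat{\calL}^{\calC}$, and from $(1-E^{-1})^re^{\lambda z}=e^{\lambda z}(1-e^{-z})^r$ one obtains that $f_{r,s}(\lambda):=(1-E^{-1})^r\lambda^{r+s}/(r+s)!$ is a polynomial of degree $s$ with coefficients read off from $h_r(z):=((1-e^{-z})/z)^r=1-\frac{r}{2}z+\frac{r(3r+1)}{24}z^2+O(z^3)$. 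In particular, $f_{r,0}=1$, $f_{r,1}(\lambda)=\lambda-r/2$, and $f_{r,2}(\lambda)=\lambda^2/2-r\lambda/2+r(3r+1)/24$; for $r=\delta_n-2$ one has $3r+1=3\delta_n-5$, explaining this factor in the theorem.

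The identifications are now triangular. Matching the coefficient of $\lambda_n^2$ (only $s_n=2$ with all other $s_m=0$ contributes, via the leading coefficient $1/2$ of $f_{\delta_n-2,2}$) yields $\widehat{\fra}^{\calC}_{(\delta_n,(\delta_m-2)_{m\neq n})}=J_{nn}$. Matching $\lambda_n\lambda_m$ (only $s_n=s_m=1$ contributes, each through the $\lambda$-coefficient of $f_{r,1}$ which is $1$) yields $\widehat{\fra}^{\calC}_{(\delta_n-1,\delta_m-1,(\delta_q-2)_{q\neq n,m})}=J_{nm}$. Substituting these into the equation for the coefficient of $\lambda_n$, which receives three contributions (from $s_n=1$ alone; from $s_n=2$ through $[\lambda^1]f_{\delta_n-2,2}=-(\delta_n-2)/2$; and from $s_n=s_q=1$ with $q\neq n$ through $[\lambda^0]f_{\delta_q-2,1}=-(\delta_q-2)/2$), solves for $\widehat{\fra}^{\calC}_{(\delta_n-1,(\delta_m-2)_{m\neq n})}=-(K,E_n^*)/2+\tfrac{1}{2}\sum_{m\in\calN}(\delta_m-2)J_{nm}$.

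The main bookkeeping step, and the one I expect to be most error-prone, is the constant-term equation, which combines four families of $(s_n)$-multi-indices: all $s_n=0$; a single $s_n=1$; a single $s_n=2$; and pairs $s_n=s_m=1$ with $n\neq m$. Substituting the three values already determined and simplifying via $\frac{(\delta_n-2)^2}{4}-\frac{(\delta_n-2)(3\delta_n-5)}{24}=\frac{(\delta_n-2)(3\delta_n-7)}{24}$ for the diagonal $J_{nn}$-terms, and $\frac{1}{4}-\frac{1}{8}=\frac{1}{8}$ for the off-diagonal $J_{nm}$-terms, yields the final formula for $\widehat{\fra}^{\calC}_{(\delta_n-2)_n}$. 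Strictly, each $\widehat{\fra}^{\calC}_{(m_n)}$ is a priori a periodic function, so (\ref{eq:Ehrcoeff}) should be read on the sublattice $\widetilde{L}$ of Definition~\ref{def:pc} where these become constants; matching coefficients of the resulting polynomial identity in $(\lambda_n)$ on $\widetilde{L}$ produces the intrinsic formulas stated in the theorem.
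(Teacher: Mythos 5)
Your proof is correct and follows the same approach as the paper: expand the left-hand side $\chi(\sum_n\lambda_nE_n^*)+{\rm pc}^{\calS}(Z)$ of (\ref{eq:Ehrcoeff}) explicitly as a quadratic in $(\lambda_n)$, observe via (\ref{eq:PSz}) (equivalently, the vanishing $(1-E^{-1})^r\lambda^m=0$ for $m<r$) that only multi-indices with $m_n\geq\delta_n-2$ survive, and then that $\sum_n m_n\leq|\cale|=2+\sum_n(\delta_n-2)$ forces the excess $s_n=m_n-(\delta_n-2)$ to satisfy $\sum_n s_n\leq 2$, and finally match coefficients of degree $2,1,0$ in a triangular way. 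Your reformulation of the paper's double-sum via the exponential generating function $h_r(z)=((1-e^{-z})/z)^r$ --- from which $f_{r,0}=1$, $f_{r,1}(\lambda)=\lambda-r/2$, $f_{r,2}(\lambda)=\lambda^2/2-r\lambda/2+r(3r+1)/24$ are read off --- is a cleaner way to package the constants in (\ref{eq:PSz}) (note $3r+1=3\delta_n-5$ for $r=\delta_n-2$), and the stated arithmetic simplifications $\frac{(\delta_n-2)^2}{4}-\frac{(\delta_n-2)(3\delta_n-5)}{24}=\frac{(\delta_n-2)(3\delta_n-7)}{24}$ and $\frac{1}{4}-\frac{1}{8}=\frac{1}{8}$ (the latter after passing from unordered to ordered pairs $\{n,m\}$) are the correct ones. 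Your closing remark on reading (\ref{eq:Ehrcoeff}) on cosets of the sublattice $\widetilde{L}$ to make the a priori periodic $\widehat\fra^{\calC}_{(m_n)}$ constant is the right way to justify the coefficient matching, and it simultaneously recovers the paper's emphasized conclusion that these particular coefficients are constants, independent of the chamber.
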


Recall that   ${\rm pc}^{\calS}(Z)=-(K^2+|\calv|)/8-\lambda(M)$, where $\lambda(M)$ is the
Casson  invariant of $M$. Hence $\widehat \fra^{\calC}_{(\delta_n-2)_n}$ equals
the normalized Casson invariant modulo some `easy terms'.

We emphasize that these formulae also  show  that the above coefficients are constants (as periodic functions
in $(\lambda_n)_n$) and independent of the chosen chamber $\calC$.

\end{document}